    \newtheorem{thm}{Theorem}[section]
    \newtheorem{cor}[thm]{Corollary}
    \newtheorem{prop}[thm]{Proposition}
    \newtheorem{lem}[thm]{Lemma}
    \theoremstyle{definition}
    \newtheorem{defn}[thm]{Definition}
    \theoremstyle{remark}
    \newtheorem{rem}[thm]{Remark}
    \newcommand{\C}{\mathbb{C}}
    \newcommand{\Q}{\mathbb{Q}}
    \newcommand{\R}{\mathbb{R}}
    \newcommand{\Z}{\mathbb{Z}}
    \newcommand{\pP}{\mathbb{P}}
    \newcommand{\cO}{\mathcal{O}}
    \newcommand{\cI}{\mathcal{I}}
    \renewcommand{\H}{\mathrm H}
    \newcommand{\cH}{\mathcal{H}}
    \newcommand{\Coh}{\mathrm{Coh}}
    \newcommand{\Ku}{\mathsf{Ku}}
    \newcommand{\Hom}{\mathrm{Hom}}
    \newcommand{\Ext}{\mathrm{Ext}}
    \newcommand{\half}{\frac{1}{2}}
    \newcommand{\ch}{\mathrm{ch}}
    \newcommand{\Pic}{\mathrm{Pic}}
    \newcommand{\cE}{\mathcal{E}}
    \newcommand{\tcE}{\tilde{\mathcal{E}}}
    \newcommand{\tE}{\tilde{E}}
    \newcommand*{\rom}[1]{\expandafter\@slowromancap\romannumeral #1@}
    \let\c@equation\c@thm
    \numberwithin{equation}{section}
\title{Bridgeland stability of minimal instanton bundles on Fano threefolds }
\author{Xuqiang QIN}
\address{Department of Mathematics, The University of North Carolina at Chapel Hill, 205 S Columbia St, Chapel Hill, NC 27514, USA}
\email{qinx@unc.edu}
\subjclass[2020]{Primary 14F08, 14J30, 14J45; Secondary 14D21}
\keywords{Instanton bundles, Bridgeland stability conditions, moduli spaces, Fano threefolds, semiorthogonal decompositions}
\begin{document}

\begin{abstract}
    We prove that minimal instanton bundles on a Fano threefold $X$ of Picard rank one and index two are semistable objects in the Kuznetsov component $\mathsf{Ku}(X)$, with respect to the stability conditions constructed by Bayer, Lahoz, Macr\`i and Stellari. When the degree of $X$ is at least $3$, we show torsion free generalizations of minimal instantons are also semistable objects. As a result, we describe the moduli space of semistable objects with same numerical classes as minimal instantons in $\mathsf{Ku}(X)$. We also investigate the stability of acyclic extensions of non-minimal instantons.
\end{abstract}

\maketitle

\setcounter{tocdepth}{1}
\tableofcontents
\section{Introduction}
Instanton bundles first appeared on the $4$-sphere $S^4$ as a way to describe Yang-Mills instantons. They serve as bridges between algebraic geometry and mathematical physics. The notion of mathematical instanton bundles was first introduced on $\mathbb{P}^3$, then generalized to Fano threefolds by Faenzi\cite{Fa} and  Kuznetsov\cite{Ku2}.
\begin{defn}\cite{Ku2}
    Let $X$ be a Fano threefold of Picard rank $1$ and index $2$. An \emph{instanton bundle of charge $n$} on $X$ is a stable vector bundle $E$ of rank $2$ with $c_1(E)=0, c_2(E)=n$, enjoying the instantonic vanishing condition: 
    \begin{align*}
        H^1(X,E(-1))=0.
    \end{align*}
\end{defn}
Any instanton bundle $E$ will have charge $c_2(E)\geqslant 2$. The instanton bundles of charge $2$ are called the \emph{minimal instantons}. By definition, the moduli space of minimal instanton bundles on $X$ is an open subscheme of the moduli space $M$ of Gieseker-semistable rank $2$ sheaves with $c_1=0, c_2=2$ and $c_3=0$. When $X$ is a cubic threefold, \cite{D} classified sheaves in $M$ and described $M$ as a blow-up of the intermediate Jacobian of $X$. His work was generalized to $X$ of degree $5$ and $4$ by the author\cite{Q1}\cite{Q2}.\\

On the other hand, Bridgeland\cite{Br07} introduced the notion of stability conditions on a triangulated category. He showed that the set parametrizing stability conditions on a triangulated category has a natural structure of a complex manifold. Since then stabilities on the bounded derived category $\mathrm{D^b}(X)$ of a smooth projective variety $X$ have been intensely studied. \cite{Br07}\cite{Ma}\cite{Ok} gave complete descriptions of the stability manifold $\mathrm{Stab}(X)$ when $X$ is a smooth projective curve. When $X$ is a smooth projective surface, stability conditions and related moduli spaces were studied in \cite{Br08}\cite{ABCH}\cite{AB}\cite{LiZ}\cite{Nu}, among many other papers. When the dimension of $X$ is at least three, the construction of Bridgeland stability conditions becomes challenging. We refer the readers to \cite{BMT}\cite{BMSZ}\cite{Li} for information on stabilities on Fano threefolds.

Bayer, Lahoz, Macr\`i and Stellari\cite{BLMS} provided a criterion to define stability condition on the right orthogonal complements of an exceptional collection in a triangulated category. They applied the criterion to Fano threefolds of Picard rank one and cubic fourfolds and induced stability conditions on them. See \cite{LLMS}\cite{LPZ1}\cite{LPZ2} for some applications on cubic fourfolds. For our purpose, let $X$ be a Fano threefold of Picard rank one and index two. The bounded derived category $\mathrm{D^b}(X)$ has the following semiorthogonal decomposition:
\begin{align*}
    \mathrm{D^b}(X)=\langle\mathsf{Ku}(X),\cO_X,\cO_X(H)\rangle
\end{align*}
where $H$ is the ample generator of the Picard group. The triangulated subcategory $\mathsf{Ku}(X)$ is called the \emph{Kuznetsov component}. Explicit computations in \cite{PY} shows that the criterion of \cite{BLMS} induces a family $\sigma(\alpha,\beta)$ of stability conditions on $\mathsf{Ku}(X)$ where $(\alpha,\beta)$ lies in a triangular region in the half plane $\R_{>0}\times \R$. Moreover, \cite{PY} showed that the family lies in a single orbit $\mathcal{K}$ with respect to the standard action of $\tilde{\mathrm{GL}}_2^+(\R)$ on the stability manifold $\mathrm{Stab}(\mathsf{Ku}(X))$ of $\mathsf{Ku}(X)$.

Pertusi and Yang\cite{PY} showed that ideal sheaves of lines  on $X$ (which are easily checked to belong to $\mathsf{Ku}(X)$) are stable objects with respect to any $\sigma\in\mathcal{K}$. Using this, they were able to identify the Fano surface of lines on $X$ with (if the degree of $X$ is $1$, an irreducible component of) the moduli space of $\sigma(\alpha,\beta)$-stable objects with the numerical class $[\cI_l]$ in $\mathsf{Ku}(X)$. 

Minimal instantons are known to be objects in $\mathsf{Ku}(X)$ and they have twice the numerical class of $[\cI_l]$. It is natural to ask if one can generalize the results of Pertusi and Yang\cite{PY} to minimal instantons. Our first result establishes their (semi)stability.
\begin{thm}\label{main1}
    Let $X$ be a Fano threefold of Picard rank one, index two and degree $d$. Let $E$ be a minimal instanton bundle on $X$. Then $E$ is $\sigma$-semistable for any $\sigma\in\mathcal{K}$. If $d\geqslant2$, then $E$ is $\sigma$-stable. 
\end{thm}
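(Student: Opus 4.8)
The plan is to realize $E$, up to shift, as an object of the double-tilt heart $\mathcal{A}(\alpha,\beta)=\mathrm{Coh}^{0}_{\alpha,\beta}(X)\cap\mathsf{Ku}(X)$ on which $\sigma(\alpha,\beta)$ is built, and then to exclude destabilizing subobjects by a cohomology-sheaf analysis driven by the slope-stability of $E$, its membership in $\mathsf{Ku}(X)$, and the instantonic vanishing. Two reductions come first. Since $\mathcal{K}$ is a single $\tilde{\mathrm{GL}}_2^+(\R)$-orbit and (semi)stability is invariant under this action, it suffices to treat one $\sigma=\sigma(\alpha,\beta)$, and I would pick $(\alpha,\beta)$ in the admissible region with $\beta$ close to $0$. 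Second, because any $\mathcal{A}(\alpha,\beta)$-destabilizing sequence is in particular destabilizing for the weak stability condition $(\mathrm{Coh}^{0}_{\alpha,\beta}(X),Z_{\alpha,\beta})$ on all of $\mathrm{D^b}(X)$, it is enough to prove $E[1]$ is semistable (resp.\ stable) there --- or, should that weak statement fail, one runs the same argument directly inside $\mathcal{A}(\alpha,\beta)$, where the extra constraint of lying in $\mathsf{Ku}(X)$ only helps.

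To place $E[1]$ in the heart: $E$ is a slope-stable bundle of rank $2$ with $c_1(E)=0$, $c_3(E)=0$ (as $E\cong E^\vee$) and $\ch(E)=(2,0,-2\ell,0)$, where $\ell$ is the class of a line; hence $E\in\mathrm{Coh}^\beta(X)$ for $\beta<0$. One then checks that $E$ is $\nu_{\alpha,\beta}$-semistable in $\mathrm{Coh}^\beta(X)$ throughout the relevant chamber: $E$ is $\nu_{\alpha,\beta}$-semistable for $\alpha\gg 0$, and a wall would require a destabilizing sub- or quotient object of bounded discriminant, which the minimality $c_2(E)=2$ together with the Bogomolov--Gieseker inequality leaves no room for --- a finite numerical check. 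For $\beta$ near $0$ one has $\nu_{\alpha,\beta}(E)<0$, so $E$ lies in the torsion-free subcategory $\mathcal{F}^{0}_{\alpha,\beta}$, whence $E[1]\in\mathrm{Coh}^{0}_{\alpha,\beta}(X)$, and since $E\in\mathsf{Ku}(X)$ in fact $E[1]\in\mathcal{A}(\alpha,\beta)$.

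Now suppose $E$ is not $\sigma$-semistable (resp.\ not $\sigma$-stable). Then there is a short exact sequence $0\to A\to E[1]\to B\to 0$ in $\mathrm{Coh}^{0}_{\alpha,\beta}(X)$ with $\phi_{\alpha,\beta}(A)>\phi_{\alpha,\beta}(E[1])$ (resp.\ with equality). The long exact sequence of cohomology sheaves gives $\cH^0(B)=0$ and
\[
0\longrightarrow \cH^{-1}(A)\longrightarrow E\longrightarrow \cH^{-1}(B)\longrightarrow \cH^{0}(A)\longrightarrow 0,
\]
with $\cH^{-1}(A)\subseteq E$ a subsheaf, $\cH^{-1}(A),\cH^{-1}(B)\in\mathcal{F}^{0}_{\alpha,\beta}$, and $\cH^{0}(A)\in\mathcal{T}^{0}_{\alpha,\beta}$. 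The rank of $\cH^{-1}(A)$ is $0$, $1$ or $2$. Using $[E]=2[\cI_l]$ and that, by Pertusi--Yang, $\cI_l$ is $\sigma$-stable of the same phase as $E[1]$, the stability function $Z_{\alpha,\beta}$ cuts the admissible numerical types of $A$ down to a short list indexed by this rank. Each is then excluded: rank $0$ is impossible since $E$ is torsion-free; in the rank-$2$ case $\cH^{-1}(A)\hookrightarrow E$ is generically an isomorphism with torsion cokernel, and comparing the invariants against the phase inequality forces $\cH^{-1}(A)=E$, so $A$ is not a proper subobject; the rank-$1$ cases --- \emph{the crux} --- produce either a nonzero subsheaf of $E$ of slope $\geq 0$, contradicting slope-stability, or a sub- or quotient sheaf of $E$ agreeing off a closed subset with $\cO_X$, $\cO_X(-H)$ or $\cO_X(H)$, each contradicting slope-stability, the $\mathsf{Ku}(X)$-orthogonality $\Hom^\bullet(\cO_X,E)=\Hom^\bullet(\cO_X(H),E)=0$, or the instantonic vanishing $H^1(E(-1))=0$ (equivalently the acyclicity of $E(-1)$, which follows from it via slope-stability and Serre duality). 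For the stronger (stability) claim, the same analysis carried out with $\phi_{\alpha,\beta}(A)=\phi_{\alpha,\beta}(E[1])$ forces $A$ to have class $[\cI_l]$ realized by one of these borderline configurations; a Bogomolov--Gieseker inequality incorporating $\deg X=d$ rules this out precisely when $d\geq 2$, whereas for $d=1$ such configurations --- extensions built from two ideal sheaves of lines --- genuinely occur, which is why only semistability holds in that case.

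The main obstacle is the content of the third paragraph: first, confirming that the tilt-semistability of $E$ has no genuine walls in the chamber at hand, and second, the case analysis excluding destabilizing subobjects --- in particular, converting the instantonic vanishing and slope-stability into the nonexistence of the borderline sub- and quotient sheaves, and isolating the numerical inequality (a Bogomolov--Gieseker bound involving $d$) that upgrades semistability to stability exactly when $d\geq 2$. I expect to need the explicit descriptions of $Z_{\alpha,\beta}$ and $\mathcal{A}(\alpha,\beta)$ from \cite{BLMS} and \cite{PY}, and to use the $\sigma$-stability of $\cI_l$ as a base case throughout.
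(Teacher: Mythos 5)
Your route to semistability is essentially the paper's: reduce to a single $\sigma(\alpha,\beta)$ via the $\tilde{\mathrm{GL}}_2^+(\R)$-orbit, start from tilt-semistability of $E$ at the large volume limit (slope-stability of $E$ plus \cite[Lemma 2.7]{BMS} or Proposition \ref{2G}), rule out walls along a vertical line by a finite numerical check, and pass to the second tilt and then to $\Ku(X)$. Two cautions on that part. First, the numerical check is not empty: along $\beta=-\half$ it leaves a genuine potential wall for $d=5$ below $\alpha=\tfrac{1}{\sqrt{20}}$, so you must land at a specific $\alpha_0\in(\tfrac{1}{\sqrt{20}},\half)$ rather than claim there is ``no room'' for walls. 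Second, taking $\beta$ close to $0$ forces $\alpha$ close to $0$ inside the admissible region $V$, i.e.\ close to the vertical asymptote $\beta=\mu_H(E)=0$ where numerical walls for $\ch_{\leqslant2}(E)=(2,0,-2)$ accumulate; the line $\beta=-\half$ is the choice that keeps the computation finite and clean.

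The genuine gap is in the upgrade from semistability to stability for $d\geqslant2$. If $E$ (suitably shifted) were strictly $\sigma$-semistable, its two Jordan--H\"older factors have numerical class $[\cI_l]$ (\cite[Lemma 3.9]{PY}), and this numerical class is the same for every $d$. Consequently no Bogomolov--Gieseker-type inequality ``incorporating $d$'' can exclude this configuration for $d\geqslant2$ while allowing it for $d=1$: the dichotomy is not numerical. What is actually needed is the classification of $\sigma$-stable objects of class $[\cI_l]$ --- for $d\geqslant2$ these are exactly ideal sheaves of lines (\cite[Proposition 4.6]{PY}), so a Jordan--H\"older factor produces a rank-one subsheaf $\cI_l\subset E$ of slope $0$, contradicting the slope-stability of $E$; for $d=1$ that classification admits extra stable objects (cf.\ \cite{PR}), which is exactly where the argument, not the bundle, breaks down. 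Relatedly, your explanation that for $d=1$ ``extensions built from two ideal sheaves of lines genuinely occur'' cannot be the obstruction: a minimal instanton is slope-stable by definition and is never such an extension; those extensions are the strictly semistable objects of class $2[\cI_l]$ (Remark \ref{semistab}), not candidate descriptions of $E$. To close the stability claim you must import or reprove the classification of stable objects of class $[\cI_l]$ rather than search for a numerical inequality.
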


When $d\geqslant3$, the classifications in \cite{D}\cite{Q1}\cite{Q2} showed that torsion free (but not locally free) generalizations of minimal instanton bundles are also objects in $\mathsf{Ku}(X)$. Our second result compares the moduli space of Gieseker-semistable rank $2$ sheaves with $c_1=0, c_2=2, c_3=0$  with the moduli space of $\sigma$-semistable objects of numerical class $2[\cI_l]$ in $\mathsf{Ku}(X)$. 
\begin{thm}\label{main2}
    Let $X$ be a Fano threefold of Picard rank one, index two and degree $d$. If $d\geqslant3$, then for any $\sigma\in\mathcal{K}$, the moduli space $M_d$ of Gieseker-semistable sheaves on $X$ with Chern character $(2,0,-2,0)$ and satisfying $H^1(E(-1))=0$ is isomorphic to a moduli space $M_\sigma(\mathsf{Ku}(X),2[\cI_l])$ of $\sigma$-semistable objects in $\mathsf{Ku}(X)$ with numerical class twice of that of an ideal sheaf of a line in $X$.
\end{thm}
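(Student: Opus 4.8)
The plan is to construct an explicit morphism $f\colon M_d\to M_\sigma:=M_\sigma(\mathsf{Ku}(X),2[\cI_l])$ and to show it is bijective on closed points with matching deformation theory, hence an isomorphism. The first observation is that every Gieseker-semistable sheaf $E$ with Chern character $(2,0,-2,0)$ and $H^1(E(-1))=0$ already lies in $\mathsf{Ku}(X)$: membership amounts to $H^\bullet(X,E)=0$ and $H^\bullet(X,E(-1))=0$, of which the first follows from slope semistability, Serre duality and $\ch_3(E)=0$, and the second is the instanton vanishing together with the automatic vanishings in degrees $0,2,3$ (compare \cite{Ku2}\cite{PY}). A Chern-character computation identifies the class of $E$ in the numerical Grothendieck group of $\mathsf{Ku}(X)$ with $2[\cI_l]$. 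By Theorem~\ref{main1} when $E$ is locally free, and by its counterpart for torsion-free generalizations of minimal instantons proved earlier in the paper for $d\geqslant3$, the object $E$ is $\sigma$-semistable for every $\sigma\in\mathcal K$. A (quasi-)universal sheaf on $M_d\times X$, viewed as a family of $\sigma$-semistable objects of $\mathsf{Ku}(X)$, then induces $f$ via the construction of $M_\sigma$ as a proper moduli space in \cite{BLMS} and the theory of stability conditions in families.

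Next I would check that $f$ is injective on closed points and formally étale. Injectivity is immediate: two $\sigma$-semistable sheaves that become isomorphic in $\mathrm{D^b}(X)$ are isomorphic as sheaves because $\Coh(X)\hookrightarrow\mathrm{D^b}(X)$ is fully faithful, and $S$-equivalence of sheaves matches $S$-equivalence of the corresponding objects of $\mathsf{Ku}(X)$ since the Jordan--Hölder factors are the same ideal sheaves of lines. For the infinitesimal statement, for $E\in M_d$ one has $\Ext^i_{\mathsf{Ku}(X)}(E,E)=\Ext^i_{\mathrm{D^b}(X)}(E,E)=\Ext^i_X(E,E)$ for all $i$, as $\mathsf{Ku}(X)$ is a full triangulated subcategory, and the condition $H^1(E(-1))=0$ is open; hence the deformation functors of $M_d$ at $E$ and of $M_\sigma$ at $f(E)$ coincide, so $f$ is formally étale. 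Being injective on closed points it is then an open immersion, and the theorem reduces to proving $f$ surjective.

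The core of the proof is to show that every $\sigma$-semistable object $F\in\mathsf{Ku}(X)$ with $[F]=2[\cI_l]$ is a sheaf lying in $M_d$. Since $\mathcal K$ is a single $\tilde{\mathrm{GL}}_2^+(\R)$-orbit I may fix $\sigma=\sigma(\alpha,\beta)$ as in \cite{PY}; since $\cI_l$ lies in the heart $\mathcal A(\alpha,\beta)\cap\mathsf{Ku}(X)$ and $[F]=2[\cI_l]$ has central charge on the same ray, $F$ lies in that heart too. I would then study the cohomology sheaves $\mathcal H^{-2}(F),\mathcal H^{-1}(F),\mathcal H^{0}(F)$ for the standard $t$-structure — the only possibly nonzero ones, as $\mathcal A(\alpha,\beta)$ is a double tilt of $\Coh(X)$ — and use the sign constraints on central charges of subobjects and quotients in the heart, the tilt-stability (Bogomolov-type) inequalities over the $(\alpha,\beta)$-region recorded in \cite{PY}, and, in the stable case, $\Hom(F,F)=\C$, to force $\mathcal H^{-2}(F)=\mathcal H^{-1}(F)=0$, i.e.\ $F\in\Coh(X)$. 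If $F$ is strictly $\sigma$-semistable, its Jordan--Hölder factors all have the phase of $\cI_l$, and since $[\cI_l]$ is primitive and of minimal mass in that phase each factor has class $[\cI_l]$, hence is an ideal sheaf of a line by \cite{PY}; an extension $G$ of two ideal sheaves of lines is automatically a sheaf (for sheaves $\Ext^1_{\mathrm{D^b}(X)}$ equals the Yoneda $\Ext^1$), is Gieseker-semistable, and has $H^1(G(-1))=0$ because $H^1(\cI_l(-1))=0$, so $G\in M_d$. Finally, once $F$ is a sheaf in $\mathsf{Ku}(X)$ with Chern character $(2,0,-2,0)$, the vanishing $H^\bullet(X,F(-1))=0$ — in particular $H^1(F(-1))=0$ — is automatic, and $\Hom^\bullet(\cO_X(H),F)=0$ rules out a $\mu$-destabilizing subsheaf (such a subsheaf would produce a nonzero map $\cO_X(H)\to F$), so $F$ is $\mu$-semistable; one then checks $F$ is Gieseker-semistable, and the classification of \cite{D}\cite{Q1}\cite{Q2} places $F$ in $M_d$. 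Combined with the open immersion above, this shows $f$ is a bijection, hence an isomorphism.

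The genuine obstacle is the cohomology-sheaf analysis forcing a $\sigma$-semistable object of class $2[\cI_l]$ to be a sheaf: eliminating $\mathcal H^{-2}(F)$ and $\mathcal H^{-1}(F)$ requires careful bookkeeping with the tilt-stability inequalities and the wall-and-chamber structure from \cite{PY}, with the constraints on possible sub- and quotient objects in the heart, and with the strictly-semistable stratum, and reconciling $\sigma$-semistability with Gieseker-semistability for the resulting sheaf also takes some care. The remaining steps — the construction of $f$, its injectivity, and the matching of deformation theories — are essentially formal given the results already available in the paper.
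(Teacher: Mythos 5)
Your overall architecture (forward direction: sheaves in $M_d$ are $\sigma$-semistable objects; backward direction: $\sigma$-semistable objects of class $2[\cI_l]$ are shifts of sheaves in $M_d$; then identify the moduli problems) is the same as the paper's, and the forward direction, the treatment of the strictly semistable locus via Jordan--H\"older factors of class $[\cI_l]$, and the functorial identification are all fine. The problem is that the step you yourself flag as ``the genuine obstacle'' --- forcing a $\sigma$-stable object $F$ with $[F]=2[\cI_l]$ to be a shift of a sheaf --- is the actual content of the theorem, and your sketch of it (``sign constraints on central charges, tilt-stability inequalities, $\Hom(F,F)=\C$'') is not sufficient to carry it out. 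Concretely: after placing $F[1]$ in the double-tilted heart on the parabola $\beta^2-\alpha^2=2/d$ where it has infinite slope, one must (a) show that the vertical line $\beta=-1$ meets no actual wall, which uses $F\in\cO_X^\perp$ and not just numerics; (b) confront a genuine numerical wall (the semicircle through $(\frac{d-2}{2d},-\frac{d+2}{2d})$, potentially realized by $\cO_X(-1)[2]$) which survives all Bogomolov-type constraints --- the paper has to enumerate eleven possible destabilizing truncated Chern characters and kill each one using Li's sharper bounds on $H\ch_2/(H^3\ch_0)$ for tilt-stable objects of small slope, and in the hardest cases by building an auxiliary extension $G'$ of $\cO_X(-1)^{\oplus n}[2]$ by the quotient and deriving a contradiction from $\Hom(\cO_X,G')=0$; and (c) even once $F[1]$ fits in a triangle $A[1]\to F[1]\to B$ with $A$ a 2-Gieseker-semistable sheaf and $B$ supported on points, one only knows $\ch(A)=(2,0,-2,l)$ with $l\geqslant 0$, and eliminating $l>0$ requires the generalized Bogomolov--Gieseker inequality of Bayer--Macr\`i--Stellari (proved by Li for these threefolds) together with an elementary-modification argument. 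None of these ingredients appears in your outline, so the backward direction remains unproved.

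Two smaller points. First, your derivation of $\mu$-semistability of the resulting sheaf from $\Hom(\cO_X(H),F)=0$ does not work as stated: a slope-destabilizing rank-one subsheaf $L$ only yields a nonzero map $\cO_X(c)\to F^{**}$ (via $L^{**}$), not a map to $F$ itself. The paper avoids this by obtaining 2-Gieseker-semistability directly from the large-volume limit of tilt stability and then upgrading to Gieseker-semistability with a separate lemma on rank-one Harder--Narasimhan factors. Second, the passage from ``same objects on both sides'' to an isomorphism of moduli spaces is cleaner in the paper's form (identify the two moduli functors and use that $M_d$ corepresents the first) than via your ``open immersion plus surjectivity'' route, which would additionally require knowing $M_\sigma$ is reduced/separated a priori; this is cosmetic, but worth noting.
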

As a result, $M_\sigma(\mathsf{Ku}(X),2[\cI_l])$ is a smooth projective variety of dimension $5$ by the description of $M_d$ in \cite{D}\cite{Q1}\cite{Q2} for $d\geqslant 3$ (see Section $2$). We mention that by \cite{D}\cite{Q2}, the vanishing $H^1(E(-1))=0$ is satisfied by any Gieseker-semistable sheaves $E$ with Chern character $(2,0,-2,0)$ for $d=3$ and $4$.\ 

We note that most of our arguments for Theorem \ref{main2} work for $d=1,2$. However there is no complete description of $M_2$ or $M_1$ to the author's knowledge.\ 

In the last section, we consider non-minimal instantons. Kuznetsov\cite{Ku2} showed that one can associate to such an instanton a unique vector bundle in $\Ku(X)$, called its \emph{acyclic extension} (for details see Section 6). We establish the (semi)stability of these acyclic extensions of instantons with charge $3$.
\begin{thm}
    Assume $d\neq 1$. Let $\tcE$ be the acyclic extension of an instanton bundle $\cE$ of charge $3$. Then $\tcE$ is $\sigma$-semistable for any stability condition $\sigma\in\mathcal{K}$. If $d\geqslant3$, $\tcE$ is $\sigma$-stable.
\end{thm}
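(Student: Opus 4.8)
The plan is to carry out, for the rank-three bundle $\tcE$, a quotient analysis inside $\Ku(X)$ that parallels the proof of Theorem \ref{main1}. Since $\mathcal{K}$ is a single $\tilde{\mathrm{GL}}_2^+(\R)$-orbit and (semi)stability is invariant under this action, it suffices to treat one member $\sigma=\sigma(\alpha,\beta)$ of $\mathcal{K}$; by the construction of \cite{BLMS} (in the form used in \cite{PY}) the heart of $\sigma$ is $\mathcal{A}(\alpha,\beta)=\Ku(X)\cap\mathrm{Coh}^{\alpha,\beta}(X)$. First I would place $\tcE$ in $\mathcal{A}(\alpha,\beta)$ for suitable parameters via the defining sequence
\begin{align*}
0\to\cE\to\tcE\to\cO_X\to 0,
\end{align*}
which identifies $\tcE$ with the projection $\mathrm{pr}(\cE)=\mathbf{L}_{\cO_X}\cE$ — a single extension, since $h^1(\cE)=1$ for charge $3$. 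Here $\cO_X$ is exceptional with a directly computable tilt-slope, and $\cE$ is $\mu$-stable of slope $0$ and, using the instanton vanishing $H^1(\cE(-1))=0$ to kill potential walls, is tilt-semistable for the parameters in question.

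The core step is to rule out a destabilizing quotient. Suppose $\tcE\twoheadrightarrow B$ in $\mathcal{A}(\alpha,\beta)$ with $\phi_\sigma(B)\leqslant\phi_\sigma(\tcE)$, and take $B$ to be $\sigma$-stable, so $B\in\Ku(X)$. Applying $\Hom(-,B)$ to the triangle $\cE\to\tcE\to\cO_X\xrightarrow{+1}$ and using the orthogonality $\Hom^\bullet(\cO_X,\Ku(X))=0$, one gets $\Hom(\tcE,B)\cong\Hom(\cE,B)$; since $\tcE/\cE=\cO_X$ has no nonzero maps into $\Ku(X)$ either, a short diagram chase upgrades the resulting nonzero map $\cE\to B$ to a surjection $\cE\twoheadrightarrow B$ in $\mathrm{Coh}^{\alpha,\beta}(X)$, reducing the problem to the study of quotients of the tilt-semistable sheaf $\cE$. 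Now one computes $[\tcE]=3[\cI_l]$ in the rank-two lattice $\mathcal{N}(\Ku(X))$; bounding the numerical class of $B$ by the central charge together with the support property of $\sigma$, and checking the finitely many surviving possibilities (using the stability of $\cI_l$ from \cite{PY} and of minimal instantons from Theorem \ref{main1}), one finds that $[B]$ is a positive multiple of $[\cI_l]$, hence $\phi_\sigma(B)=\phi_\sigma(\tcE)$. So no quotient has strictly smaller phase, and $\tcE$ is $\sigma$-semistable — this is where the hypothesis $d\neq1$ is used, since the stability input of \cite{PY} fails for $d=1$.

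To promote this to $\sigma$-stability when $d\geqslant3$, one must exclude a proper $\sigma$-stable quotient $B$ of class $[\cI_l]$ or $2[\cI_l]$ of the same phase. By \cite{PY} a $\sigma$-stable object of class $[\cI_l]$ is an ideal sheaf of a line, and by Theorem \ref{main2} — which requires $d\geqslant3$ — a $\sigma$-stable object of class $2[\cI_l]$ is a Gieseker-stable sheaf with Chern character $(2,0,-2,0)$; in both cases $B$ is a sheaf. But then in $0\to K\to\cE\to B\to0$ the kernel $K\in\mathrm{Coh}^{\alpha,\beta}(X)$ is itself a sheaf (from the cohomology sequence, $K$ has no cohomology in negative degrees, and none in positive ones as it lies in $\mathrm{Coh}^{\alpha,\beta}(X)$), so $\cE\twoheadrightarrow B$ is a surjection of sheaves — which is impossible: $\cE$ is $\mu$-stable of slope $0$ and rank $2$, while $B$ is torsion free of slope $0$ of rank $1$ or $2$, so a rank-one quotient would contradict $\mu$-stability of $\cE$, and a rank-two one would force $\cE\cong B$, contradicting $c_2(\cE)=3\neq2=c_2(B)$. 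Hence $\tcE$ is $\sigma$-stable for $d\geqslant3$. For $d=2$ the classification of class-$2[\cI_l]$ objects is not known, so a same-phase quotient of that class need not be a sheaf and the last step breaks down; accordingly only $\sigma$-semistability is claimed, consistently with the existence of strictly semistable objects of class $3[\cI_l]$ such as $E_0\oplus\cI_l$ with $E_0$ a minimal instanton.

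The step I expect to be the main obstacle is the numerical one that confines every destabilizing sub/quotient of $\tcE$ to the ray of $[\cI_l]$: passing from rank two (Theorem \ref{main1}) to rank three admits genuinely more potential destabilizers — notably two-term complexes and quotients of class $2[\cI_l]$ — and discarding them all requires the combined force of the tilt-Bogomolov inequality, the explicit phases of $\cE$ and of the exceptional objects in the relevant chamber, and the instanton vanishings $h^1(\cE)=1$ and $H^1(\cE(-1))=0$. The restriction to charge $3$ is precisely what keeps the candidate list finite, since then $\tcE$ is a single canonical extension of $\cO_X$ by $\cE$ rather than an iterated one.
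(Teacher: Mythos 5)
Your outline has the right overall shape (eliminate destabilizers numerically, then upgrade semistability to stability by a sheaf-theoretic argument when $d\geqslant 3$), but the step you yourself flag as ``the main obstacle'' --- confining every destabilizer to the ray of $[\cI_l]$ --- is the entire content of the theorem, and you give no argument for it. The support property and the central charge do not force $[B]$ onto the ray $\R_{>0}[\cI_l]$: $\mathcal{N}(\Ku(X))$ has rank two, and the proof requires actually listing and killing the candidate walls. Concretely, the paper first proves (Lemma \ref{acyc2}/Corollary \ref{acyc3}, a nontrivial statement about universal extensions of slope-stable bundles) that $\tcE$ is Gieseker-stable, hence by Proposition \ref{2G} tilt-semistable along $\beta=-\half$ for $\alpha\gg0$, and then eliminates all walls for the truncated twisted character $(3,\tfrac32 H,\tfrac38(d-8))$ case by case ($b=1,2$), using the wall equation, the Bogomolov inequality, a parity constraint on $\ch_1$, and --- crucially for the surviving cases $d=5,3,2$ --- the $\ch_2$-bounds of Proposition \ref{Libound}. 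None of this is replaced by anything in your proposal. Your alternative anchor, ``$\cE$ is tilt-semistable for the parameters in question,'' is likewise an unproven wall statement about the character $(2,0,-3,0)$, and the ``short diagram chase'' upgrading a nonzero map $\cE\to B$ to a surjection in the double-tilted heart is not justified (note also that the heart of $\sigma(\alpha,\beta)$ is $\Ku(X)\cap\Coh^0_{\alpha,\beta}(X)$, not $\Ku(X)\cap\Coh^{\alpha,\beta}(X)$).

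Two smaller points. First, your explanation of the hypothesis $d\neq 1$ is wrong: Pertusi--Yang's stability of ideal sheaves of lines holds for $d=1$ as well; the case $d=1$ is excluded because the wall elimination fails there (Proposition \ref{Libound} has no $d=1$ clause, and the paper's final remark says exactly that uneliminated walls are the obstruction for $d=1$ and for charge $n\geqslant 4$). Second, your argument for stability when $d\geqslant 3$ is workable in spirit but the paper's route is cleaner and avoids the surjectivity issue: assuming $\tcE$ strictly semistable, its Jordan--H\"older factors are classified (ideal sheaves of lines, minimal instantons, or the non-locally-free sheaves of Proposition \ref{classfication}), and one checks directly that $\Hom(F,\tcE)=0$ for each such $F$ using the defining extension and $H^0(\tcE)=0$. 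If you want to salvage your version, you must at least verify that the kernel $K$ of $\cE\to B$ lies in $\Coh(X)$ and handle the rank-two case via $\ch(K)=(0,0,-1,0)$ being non-effective, rather than asserting $\cE\cong B$.
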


\subsection*{Other related work}
Lahoz, Macr\`i and Stellari\cite{LMS} constructed the first examples of Bridgeland stability conditions on the Kuznetsov component of a cubic threefold. They proved that the moduli space of semistable objects with numerical class $2[\cI_l]$ is isomorphic to $M_3$ for properly chosen stabilities. We note it is not known whether the stability constructed there is in $\mathcal{K}$.

In a recent work \cite{BBF et}, the authors studied the moduli space of semistable objects with numerical class $[\cI_l]+[S(\cI_l)]$ with respect to some $\sigma\in\mathcal{K}$ on a cubic threefold, where $S$ is the Serre functor of $\mathsf{Ku}(X)$. They showed the moduli space is isomorphic to an blow-up of the theta divisor of the intermediate Jacobian.

Petkovic and Rota\cite{PR} classified the stable objects in the moduli space containing the Fano surface of lines when $d=1$.

During the completion of this paper, the author was made aware by Zhiyu Liu and Shizhuo Zhang of their independent preprint\cite{LZ} in which they claim similar results to Theorem \ref{main1} and \ref{main2}.

\subsection*{Acknowledgement}
I am very grateful to Laura Pertusi for answering my questions on Bridgeland stabilities, for many interesting discussions, as well as her comments on an early draft of this paper. I would like to thank Justin Sawon for interesting discussions and his support. I thank Zhiyu Liu and Shizhuo Zhang for informing me of their result and sending their draft. I am very grateful to an anonymous referee for suggesting Lemma 6.4 and its proof. Finally, I thank the referees for careful reading of the paper and useful suggestions.


\section{Review on Fano threefolds and minimal instanton bundles}
\subsection{Fano threefolds of Picard rank $1$ and index $2$}
A Fano variety $X$ is a smooth projective variety whose anticanonical divisor $-K_X$ is ample. Its index $i(X)$ is defined to be the largest integer so that $-K_X=i(X)H$ for some ample divisor $H$. It is well-known that $i(X)\leqslant \dim(X)+1$ (see \cite{IP}), with equality holds only if $X$ is $\pP^{\dim(X)}$. For a Fano threefold $X$, this means $i(X)$ is either $1,2,3$ or $4$. If $i(X)=3$, then $X$ is a smooth quadric in $\pP^4$. If $i(X)=1$ or $2$, the most interesting cases are those with $\Pic(X)\cong \Z$ and they were classified by Iskovskih\cite{IP}.\ 

In this paper, we will only be interested in Fano threefolds $X$ with Picard rank $1$ and index $2$. Let $H$ be the ample generator of $\Pic(X)$. Let $d:=H^3$ denote the degree of $X$. Then Iskovskih’s classification\cite{IP} asserts $1\leqslant d\leqslant 5$, and:
\begin{itemize}
    \item if $d=5$, $X_5\hookrightarrow\pP^6$ is a codimension $3$ linear section of $\mathrm{Gr}(2,5)$ in its Pl\"ucker embedding;
    \item if $d=4$, $X_4\hookrightarrow\pP^5$ is a complete intersection of two smooth quadrics;
    \item if $d=3$, $X_3\hookrightarrow\pP^4$ is a cubic threefold;
    \item if $d=2$, $X_2\to\pP^3$ is a double cover of $\pP^3$ ramified in a quartic surface;
    \item if $d=1$, $X_1$ is a hypersurface of degree $6$ in a weighted projective space $\pP(1,1,1,2,3)$.
\end{itemize}
Let $X$ be a Fano threefold of Picard rank $1$ and index $2$. Then 
\begin{align*}
    H^2(X,\Z)=H^4(X,\Z)=H^6(X,\Z)=\Z
\end{align*}
and they are generated by the class of a hyperplane, a line and a point respectively. As a result, we will refer to the Chern classes of coherent sheaves on $X$ as integers. The ample generator of the Picard group will be denoted by $\cO_X(1)$, thus $\omega_X\cong\cO_X(-2)$. It is an elementary computation to check that 
\begin{align*}
    \mathrm{td}(\mathcal{T}_X)=(1,1,1+\frac{d}{3},1).
\end{align*}
\subsection{Derived categories of of Fano threefolds of index $2$}
Let $X$ be a Fano threefold of Picard rank $1$ and index $2$. In this section, we review some facts about the bounded derived category of coherent sheaves on $X$. For basic notions on derived category and semiorthogonal decomposition, we refer the readers to \cite{H}.
\begin{defn}\cite{Ku1}
    The collection of line bundles $\{\cO_X,\cO_X(1)\}$ is exceptional. The Kuznetsov component $\mathsf{Ku}(X)$ is defined by the semiorthogonal decomposition
    \begin{align*}
        \mathrm{D^b}(X)=\langle\mathsf{Ku}(X),\cO_X,\cO_X(1)\rangle.
    \end{align*}
\end{defn}
We describe $\mathsf{Ku}(X)$ for $2\leqslant d\leqslant 5$:
\begin{itemize}
    \item if $d=5$, $\Ku(X_5)\cong \mathrm{D^b}(Q_3)$ where $Q_3$ is the Kronecker quiver with three arrows;
    \item if $d=4$, $\Ku(X_4)\cong \mathrm{D^b}(C)$ where $C$ is a smooth projective curve of genus $2$;
    \item if $d=3$, $\Ku(X_3)$ has a Serre functor $S_{\Ku(X_3)}$ satisfying $S^3_{\Ku(X_3)}=[5]$;
    \item if $d=2$, $\Ku(X_2)$ has a Serre functor $S_{\Ku(X_2)}$ satisfying $S_{\Ku(X_3)}=\iota[2]$ where $\iota$ is the involution induced by the double cover.
\end{itemize}
\subsection{Instanton bundles}
Let $X$ be a Fano threefold of Picard rank $1$, index $2$ and degree $d$.
\begin{defn}\cite{Ku2}
    An \emph{instanton bundle of charge $n$} on $X$ is a stable vector bundle $E$ of rank $2$ with $c_1(E)=0, c_2(E)=n$, enjoying the instantonic vanishing condition: 
    \begin{align*}
        H^1(X,E(-1))=0.
    \end{align*}
\end{defn}
We mention that the charge $c_2(E)\geqslant 2$ \cite[Corollary 3.2]{Ku2}. Instanton bundles of charge $2$ are called the \emph{minimal instantons}. We also mention here that if $E$ is a minimal instanton, then $E\in \mathsf{Ku}(X)$ by \cite[Lemma 3.1]{Ku2}.\

By definition, the moduli space of minimal instanton bundles is an open subscheme of the moduli space of Gieseker-semistable sheaves with Chern character $(2,0,-2,0)$. The classification of Gieseker-semistable sheaves with Chern character $(2,0,-2,0)$ was first achieved by Druel\cite{D} for $d=3$, and later generalized to $d=5,4$ by the author\cite{Q1}\cite{Q2}. 
\begin{prop}\label{classfication}
    Assume $3\leqslant d\leqslant 5$. Let $E$ be a Gieseker-semistable sheaf on $X$ with Chern character $(2,0,-2,0)$. If $E$ is stable, then either $E$ is locally free or $E$ is associated to a smooth conic $Y\subset X$ so that we have an exact sequence:
     \begin{align*}
         0\to E\to H^0(\theta(1))\otimes \cO_X\to \theta(1)\to 0
     \end{align*}
     where $\theta$ is the theta-characteristic of $Y$.\
     
     If $E$ is strictly Gieseker-semistable, then $E$ is the extension of two ideal sheaves of lines.
\end{prop}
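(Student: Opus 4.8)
The plan is to treat the strictly Gieseker-semistable case and the stable case separately; the first is quick, the second is where all the real work lies.

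\emph{Strictly Gieseker-semistable $E$.} Such an $E$ is pure of dimension $3$, so its Jordan--Hölder factors are Gieseker-stable, pure of dimension $3$, and have the same reduced Hilbert polynomial as $E$; since $\mathrm{rk}(E)=2$ there are exactly two of them, each of rank $1$. As $\Pic(X)\cong\Z$, each factor is of the form $\cI_{Z_i}(a_i)$; matching first Chern classes forces $a_1+a_2=0$, and matching reduced Hilbert polynomials (all equal to $\tfrac12\chi(E(n))$) forces $a_1=a_2=0$ together with $\chi(\cO_{Z_i}(n))=n+1$, so each $Z_i$ is a line $l_i$. Hence $E$ is an extension of $\cI_{l_2}$ by $\cI_{l_1}$; conversely every such extension is torsion-free with the same reduced Hilbert polynomial as both factors, hence strictly Gieseker-semistable.

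\emph{Gieseker-stable $E$; reduction to a trivial reflexive hull.} If $E$ is locally free we are in the first alternative, so assume not. A Riemann--Roch computation with $\mathrm{td}(\mathcal T_X)=(1,1,1+\tfrac d3,1)$ gives $\chi(E(n))=\tfrac d3n^3+dn^2+\tfrac{2d}3n$, so in particular $\chi(E)=\chi(E(-1))=0$ and $\mu$-stability kills the relevant extreme cohomology. Since $\mathrm{rk}(E)=2$, $c_1(E)=0$ and $\Pic(X)\cong\Z$, the reflexive hull satisfies $E^{**}=E^\vee=\mathcal{H}om(E,\cO_X)$, which is a $\mu$-semistable reflexive sheaf of rank $2$ with $c_1=0$; writing $T:=E^\vee/E\neq 0$ (supported in dimension $\le1$) one gets $c_2(E^\vee)=2-\deg T_1$, where $T_1$ is the one-dimensional part of $T$. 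I would then argue that the only surviving possibility is $\deg T_1=2$: degrees $\geq 3$ are excluded by the Bogomolov inequality $c_2(E^\vee)\geq 0$; if $T$ is purely zero-dimensional then $E^\vee$ is a reflexive (non-locally-free if $T\ne0$) sheaf with $c_1=0$, $c_2=2$, whose singularities force $\ch_3(E^\vee)\geq 0$, and one checks that $\ch_3(E)=\ch_3(E^\vee)-\mathrm{length}(T)=0$ cannot then hold; if $\deg T_1=1$ then $E^\vee$ is a $\mu$-semistable rank-$2$ reflexive sheaf with $c_1=0$, $c_2=1$, which does not exist on $X$ for $3\le d\le 5$. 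Hence $c_2(E^\vee)=0$; a $\mu$-semistable reflexive rank-$2$ sheaf with $c_1=c_2=0$ has vanishing discriminant, so it is locally free and projectively flat, and since $X$ is simply connected it must be $\cO_X^{\oplus 2}$.

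\emph{Extracting the conic.} We now have $0\to E\to\cO_X^{\oplus 2}\to T\to 0$ with, comparing Chern characters, $\ch(T)=(0,0,2,0)$. A direct check on the defining sequence shows that a nonzero zero-dimensional subsheaf of $T$ would produce a destabilizing ideal sheaf inside $E$, so $T$ is purely one-dimensional of degree $2$; purity then makes $T$ a rank-$1$ torsion-free sheaf on a degree-$2$ curve $C$, and a further analysis of when the resulting $E$ is \emph{stable} rules out reducible or non-reduced $C$ and non-locally-free $T$. Thus $C$ is a smooth conic and $T$ is a line bundle on $C$; the condition $\ch_3(T)=0$ pins the degree of $T|_C$ to be $1$, i.e.\ $T\cong\cO_C(1)\cong\theta_C(1)$ with $\theta_C=\cO_C(-1)$ the theta-characteristic of $C$. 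Finally $\cO_C(1)$ is globally generated with $h^0=2$, so the surjection $\cO_X^{\oplus 2}\to T$ is, up to an automorphism of $\cO_X^{\oplus 2}$, the evaluation map $H^0(\theta(1))\otimes\cO_X\to\theta(1)$, which is exactly the asserted sequence.

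\emph{Expected main obstacle.} Everything above is formal: Chern-class bookkeeping, the structure of reflexive rank-$2$ sheaves, the Bogomolov inequality and its equality case, and destabilization arguments. The genuinely non-formal input is the pair of exclusions in the second paragraph --- that a stable sheaf with these invariants cannot have a zero-dimensional discrepancy $T$, and that there is no $\mu$-semistable rank-$2$ sheaf with $c_1=0$, $c_2=1$ on $X$. These are precisely the points that require the explicit projective geometry of $X_3$, $X_4$ and $X_5$, and they are handled degree by degree in \cite{D}, \cite{Q1}, \cite{Q2}.
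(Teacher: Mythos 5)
A preliminary remark: the paper offers no proof of this proposition --- it is imported verbatim from \cite{D} (for $d=3$) and \cite{Q1}, \cite{Q2} (for $d=5,4$), so the only ``in-paper argument'' is the citation itself. Your outline does follow the architecture of those references (pass to the double dual, apply Bogomolov, classify the quotient $T=E^{**}/E$), and your treatment of the strictly semistable case is correct. However, in the stable case several steps that you present as formal bookkeeping are wrong as stated. First, your exclusion of $\deg T_1=1$ rests on the nonexistence of $\mu$-semistable reflexive rank-$2$ sheaves with $c_1=0$, $c_2=1$; this is false. For any line $\ell\subset X$ one has $\det N_{\ell/X}\cong\omega_\ell\otimes\omega_X^{-1}|_\ell\cong\cO_\ell$, hence $\Ext^1(\cI_\ell,\cO_X)\cong H^0(\mathcal{E}xt^2(\cO_\ell,\cO_X))\cong H^0(\cO_\ell)=\C$, and the nonsplit extension $0\to\cO_X\to F\to\cI_\ell\to0$ is a locally free, strictly $\mu$-semistable sheaf with exactly these invariants. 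What your argument actually needs is (i) that a section of $E^{**}$ Gieseker-destabilizes $E$, which disposes of the strictly $\mu$-semistable case, and (ii) the nonexistence of $\mu$-\emph{stable} reflexive sheaves with $c_1=0$, $c_2=1$, which is one of the genuine geometric inputs of the references. Second, your purity argument for $T$ is backwards: a $0$-dimensional subsheaf $T_0\subset T$ yields $E\subset\ker\bigl(\cO_X^{\oplus2}\to T/T_0\bigr)$ with quotient $T_0$, i.e.\ an \emph{over}-sheaf of $E$, not a destabilizing subsheaf; purity must instead be extracted from the $\ch_3$ count together with the analysis of $T$ on its degree-$2$ support. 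Third, in the case where $T$ is purely $0$-dimensional of length $m>0$ there is no numerical contradiction: $\ch_3(E^{**})=m$ gives $c_3(E^{**})=2m>0$, which is perfectly compatible with the Hartshorne-type inequality $c_3\geqslant0$ for reflexive sheaves, so ``one checks this cannot hold'' is not a check but precisely the nontrivial assertion.

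In short, the skeleton is the standard one and you have correctly located where the projective geometry of $X_3$, $X_4$, $X_5$ must enter, but the three exclusions above, together with the elimination of singular conics, \emph{are} the theorem rather than footnotes to it; as written, the stable case does not close without importing the main results of \cite{D}, \cite{Q1}, \cite{Q2} --- which is what the paper itself does by simply citing them.
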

\begin{rem}\label{deg5vanishing}
    Along with the above classification, it was proved in \cite{D}\cite{Q2} that for $d=3$ or $4$, any Gieseker-semistable sheaf $E$ with Chern character $(2,0,-2,0)$ satisfies the vanishing condition:
    \begin{align}\label{inst van}
        H^1(E(-1))=0.
    \end{align}
    In \cite{Q1}, the author conjectured that the same thing holds for $d=5$. We also mention that for a Gieseker-semistable sheaf $E$ with Chern character $(2,0,-2,0)$, satisfying (\ref{inst van}) is equivalent to $E\in \mathsf{Ku}(X)$.
\end{rem}
Using this description, the moduli space of Gieseker-semistable sheaves with Chern character $(2,0,-2,0)$ were studied in \cite{D}\cite{Q1}\cite{Q2}.
\begin{thm}\label{moduli}
    Assume $3\leqslant d\leqslant 5$. The moduli space $M_d$ of Gieseker-semistable sheaves with Chern character $(2,0,-2,0)$ and satisfying the vanishing condition (\ref{inst van}) is a smooth projective variety of dimension $5$. More specifically:
    \begin{itemize}
        \item if $d=5$, $M_5$ is isomorphic to $\pP^5$; 
        \item if $d=4$, $M_4$ is a $\pP^3$-bundle over the Jacobian of the genus $2$ curve $C$ mentioned in Section 2.2;
        \item if $d=3$, $M_3$ is isomorphic to the blow-up
of the intermediate Jacobian in (minus) the Fano surface of lines.
    \end{itemize}
\end{thm}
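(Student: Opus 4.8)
Since the statement collects the principal theorems of \cite{D}\cite{Q1}\cite{Q2}, the plan is to recall how each assertion would be proved. The backbone is the stratification of $M_d$ provided by Proposition \ref{classfication}: every closed point is represented by an instanton bundle, by a stable sheaf attached to a smooth conic, or by an extension of two ideal sheaves of lines. Projectivity is the least delicate point. By construction $M_d$ is (a union of connected components of) the Gieseker moduli space of semistable sheaves with Chern character $(2,0,-2,0)$, which is projective by the general theory; for $d=3,4$ the vanishing (\ref{inst van}) holds for \emph{every} such semistable sheaf (Remark \ref{deg5vanishing}), so $M_d$ is the entire Gieseker space, while for $d=5$ one checks that (\ref{inst van}) is both an open and a closed condition and hence cuts out a union of components.

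For smoothness and the dimension count I would argue by deformation theory on the locus of stable sheaves. If $E$ is stable then $\Hom(E,E)=\C$, and Serre duality on $X$ together with $\omega_X\cong\cO_X(-2)$ gives $\Ext^3(E,E)\cong\Hom(E,E(-2))^\vee$ and $\Ext^2(E,E)\cong\Ext^1(E,E(-2))^\vee$. Stability forces the first group to vanish since $\mu(E(-2))=-2<0=\mu(E)$, and the required vanishing $\Ext^1(E,E(-2))=0$ is the technical heart of the smoothness statement; it is verified case by case, using the locally free resolution of the conic-type sheaves from Proposition \ref{classfication} and the instantonic vanishing for the bundles. Granting $\Ext^2(E,E)=0$, the Kuranishi obstruction vanishes, so $M_d$ is smooth at $[E]$ of dimension $\dim\Ext^1(E,E)=1-\chi(E,E)$. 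A Hirzebruch--Riemann--Roch computation with $\mathrm{td}(\mathcal{T}_X)=(1,1,1+\frac{d}{3},1)$ gives $\chi(E,E)=\int_X\ch(E)^\vee\ch(E)\,\mathrm{td}(X)=-4$, independently of $d$, whence $\dim\Ext^1(E,E)=5$. The strictly semistable points need extra care, because such $E$ fail to be simple and the naive tangent--obstruction computation does not apply; there I would control the local structure of $M_d$ through the explicit geometric models described below.

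The three identifications then proceed along the geometry special to each degree. For $d=5$ I would pass through the equivalence $\Ku(X_5)\cong\mathrm{D^b}(Q_3)$ with $Q_3$ the three-arrow Kronecker quiver: minimal instantons correspond to semistable representations in a fixed dimension vector, and the resulting quiver moduli space is computed by GIT to be $\pP^5$. For $d=4$ one uses $\Ku(X_4)\cong\mathrm{D^b}(C)$ for the genus $2$ curve $C$ arising from the pencil of quadrics through $X_4$; transporting the moduli problem to $C$ exhibits a line-bundle invariant landing in $\mathrm{Jac}(C)$ and an extension/section datum filling out a $\pP^3$, which assembles into the $\pP^3$-bundle structure. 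For $d=3$ I would follow Druel: the Abel--Jacobi map assigns to each sheaf a point of the intermediate Jacobian $J(X_3)$, the locus of strictly semistable sheaves (extensions of $\cI_{l_1}$ and $\cI_{l_2}$) is parametrized by the summation $F\times F\to J(X_3)$ onto the image of the Fano surface of lines under $-\mathrm{AJ}$, and $M_3$ is identified with the blow-up of $J(X_3)$ along that surface, the exceptional divisor being exactly the strictly semistable boundary.

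The main obstacle is twofold and concentrated in the $d=3$ case. Analytically, the smoothness of $M_d$ along the strictly semistable boundary cannot be read off from $\Ext^2(E,E)$ alone, so one must instead identify the moduli space étale-locally with the geometric model and verify smoothness there. Geometrically, the hard part is showing that the Abel--Jacobi morphism realizes $M_3$ \emph{precisely} as the blow-up of $J(X_3)$ along a translate of the Fano surface: this requires the Clemens--Griffiths theory of the intermediate Jacobian, a careful analysis of how an extension of two ideal sheaves of lines degenerates, and the matching of the exceptional $\pP^1$-bundle with the family of such extensions parametrized over the boundary.
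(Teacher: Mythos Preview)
The paper does not actually prove this theorem: it is stated as a summary of the main results of \cite{D}, \cite{Q1}, and \cite{Q2}, with no argument given beyond the citation. So there is no ``paper's own proof'' to compare against; your proposal is really a sketch of the content of those references.

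As such a sketch it is broadly sound. The Hirzebruch--Riemann--Roch computation $\chi(E,E)=-4$ is correct, the identification of $\Ext^2(E,E)=\Ext^1(E,E(-2))^\vee=0$ as the key smoothness input is right, and you correctly flag that the strictly semistable locus and the blow-up description for $d=3$ are where the real work lies. Two minor points of divergence from the cited sources: for $d=4$ and $d=5$ you propose routing the identification through the equivalences $\Ku(X_4)\cong\mathrm{D^b}(C)$ and $\Ku(X_5)\cong\mathrm{D^b}(Q_3)$, but \cite{Q1} and \cite{Q2} argue more directly from the sheaf presentations of Proposition~\ref{classfication}; indeed the remark after Theorem~\ref{MOD} presents the curve route for $d=4$ as an alternative to \cite{Q2}, not a summary of it. And for $d=5$, per Remark~\ref{deg5vanishing} the vanishing (\ref{inst van}) for \emph{every} semistable sheaf of this Chern character is only conjectured, so projectivity of $M_5$ is not obtained by showing the vanishing locus is closed in the full Gieseker space, but rather a posteriori from the explicit identification $M_5\cong\pP^5$.
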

\section{Review on stability conditions}
\subsection{Slope-stability and Gieseker-stability}
Let $X$ be a smooth projective threefold. Fix an ample line bundle $H$ on $X$. When $X$ is a Fano threefold of Picard rank one, we will always take the ample generator of the Picard group as $H$.
\begin{defn}
    For any coherent sheaf $F$ on $X$, the \emph{(Mumford-Takemoto)-slope} is defined as
    \begin{align*}
        \mu_H(F):=
        \begin{cases}
            \frac{H^2\ch_1(F)}{H^3\ch_0(F)} &\mbox{ if $\ch_0(F)\neq 0$;}\\
            +\infty &\mbox{ if $\ch_0(F)= 0$.}
        \end{cases}
    \end{align*}
    A coherent sheaf $F$ is slope-(semi)stable if for any non-trivial proper subsheaf $F'\subset F$, we have $\mu_H(F')<(\leqslant )\mu_H(F/F').$
\end{defn}

Next we recall the notion of Gieseker-stability. We will also use the refined notion of 2-Gieseker-stability introduced in \cite[Section 4]{BBF et} and \cite[Section 2.5]{JM}. We follow the exposition of \cite{BBF et}. 
\begin{defn}
    We define a pre-order on the polynomial ring $\R[m]$ as follows:
    \begin{enumerate}
        \item For all non-zero $f\in\R[m]$, we have $f\prec0$.
        \item If $\deg(f)>\deg(g)$ for non-zero $f,g\in\R[m]$, then $f\prec g$.
        \item If $\deg(f)=\deg(g)$ for non-zero $f,g\in\R[m]$ and let $l_f$ and $l_g$ be the leading coefficient of $f$ and $g$, then $f\preceq g$ if and only if $f(m)/l_f\leqslant g(m)/l_g$ for $m\gg 0$.
    \end{enumerate}
\end{defn}
For any $F\in\Coh(X)$, we denote its \emph{Hilbert polynomial} by $P(F):=\chi(F(mH))=\sum_{i=0}^3a_im^i$. Moreover, let $P_2(F):=\sum_{i=1}^3a_im^i$.
\begin{defn}
    \begin{enumerate}
        \item We say $F$ is \emph{Gieseker-(semi)stable} if for all nontrivial proper subsheaf $F'\subset F$, the inequality $P(F')\prec(\preceq)P(F)$ holds.
        \item We say $F$ is \emph{2-Gieseker-(semi)stable} if for all nontrivial proper subsheaf $F'\subset F$, the inequality $P_2(F')\prec(\preceq)P_2(F/F')$ holds.
    \end{enumerate} 
\end{defn}
The three notions imply each other in the following way:
\begin{align*}
    \mbox{slope-stable}\Rightarrow \mbox{2-Gieseker-stable}\Rightarrow\mbox{Gieseker-stable}&\\
    \Downarrow&\\
    \mbox{slope-semistable}\Leftarrow \mbox{2-Gieseker-semistable}\Leftarrow\mbox{Gieseker-semistable}&
\end{align*}

\subsection{(Weak) stability conditions on triangulated categories}
In this section we review elements in the theory of (weak) stability conditions on threefolds which are essential for our discussion. We refer the readers to \cite{Br07} for basic notions of t-structure and slicing.\ 

Let $\mathcal{D}$ be a triangulated category. We first recall the notion of a heart.
\begin{defn}
    A \emph{heart of a bounded t-structure} on $\mathcal{D}$ is a full additive subcategory $\mathcal{A}$ such that:
    \begin{enumerate}
        \item if $i>j$ are integers, then $\Hom_{\mathcal{D}}(A[i],B[j])=0$ for all $A,B\in \mathcal{A}$.
        \item for any nonzero object $F\in\mathcal{D}$, there exists a sequence of morphisms
        \begin{align*}
         0=F_0\xrightarrow{\phi_1}F_1\xrightarrow{\phi_2}\cdots\xrightarrow{\phi_m}F_m=F
        \end{align*}
        so that $\mathrm{Cone}(\phi_i)$ is of the form $A_i[k_i]$ for $A_i\in \mathcal{A}$ and integers $k_1>k_2>\cdots k_m$.
    \end{enumerate}
\end{defn}
Note a heart $\mathcal{A}$ of a t-struecture is an abelian category.
\begin{defn}
    Let $\mathcal{A}$ be an abelian category. A group homomorphism $Z:K(\mathcal{A})\to \C$ is called a \emph{weak stability function} if for any nonzero object $F\in \mathcal{A}$, $\Im Z(F)\geqslant 0$ and $\Im Z(F)=0$ only if $\Re Z(F)\leqslant 0$.\ 
    
    We call $Z$ a \emph{stability function} if in addition, $\Im Z(F)=0$ implies $\Re Z(F)<0$ for $F\neq 0$.
\end{defn}
Fix a finite rank lattice $\Lambda$ and surjective group homomorphism $v:K(\mathcal{A})\to \Lambda$.
\begin{defn}
    A \emph{weak stability condition} on $\mathcal{D}$ with respect to $\Lambda$ is a pair $\sigma=(\mathcal{A},Z)$ where $\mathcal{A}$ is the heart of a bounded t-structure on $\mathcal{D}$ and $Z:\Lambda \to \C$ is a group homomorphism, such that the following conditions hold:
    \begin{enumerate}
        \item The composition $\mathcal{A}\xrightarrow{v}\Lambda\xrightarrow{Z}\C$ is a weak stability function. For $F\in \mathcal{A}$, we write $Z(F):=Z(v(F))$ for simplicity. We define
        \begin{align*}
            \mu_\sigma(F)=\begin{cases}
            -\frac{\Re Z(F)}{\Im Z(F)} &\mbox{if } \Im Z(F)>0;\\
            +\infty&\mbox{otherwise}
            \end{cases}
        \end{align*}
        We call $F$ \emph{$\sigma$-(semi)stable} if for all nonzero subobject $F'\subset F$, we have $$\mu_\sigma(F')<(\leqslant)\mu_\sigma(F/F').$$
        \item Any object of $\mathcal{A}$ has a Harder-Narasimhan(HN) filtration in $\sigma$-semistable objects (called HN factors).
        \item There exists a quadratic form $Q$ on $\Lambda\otimes \R$ such that $Q(F)\geqslant 0$ for any $\sigma$-semistable object $F\in \mathcal{A}$ and $Q$ is negative definite when restricted to the kernel of $Z$.
    \end{enumerate}
    If in addition $Z\circ v:K(\mathcal{A})\to \C$ is a stability function, we call $\sigma$ a \emph{Bridgeland stability condition}.\ 
    \end{defn}

    We use $\mathrm{Stab}(\mathcal{D})$ to denote the set of Bridgeland stability conditions on $\mathcal{D}$. Bridgeland\cite{Br07} showed that $\mathrm{Stab}(\mathcal{D})$ has the structure of a complex manifold. Moreover, if we use $\tilde{\mathrm{GL}}_2^+(\R)$ to denote the universal cover of $\mathrm{GL}_2^+(\R)$, then there is a right group action of $\tilde{\mathrm{GL}}_2^+(\R)$ on $\mathrm{Stab}(\mathcal{D})$. We refer the readers to \cite{Br07} for details of this action.\ 
    
    Given a weak stability condition $\sigma=(\mathcal{A},Z)$ on $\mathcal{D}$, one can construct a new heart of a bounded t-structure via the method of \emph{tilting}: let $\mu\in \R$, define
    \begin{align*}
        &\mathcal{T}^\mu_\sigma=\{E\in \mathcal{A}:\mbox{ all HN factors $F$ of $E$ have slope }\mu_\sigma(F)>\mu\};\\
        &\mathcal{F}^\mu_\sigma=\{E\in \mathcal{A}:\mbox{ all HN factors $F$ of $E$ have slope }\mu_\sigma(F)\leqslant \mu\}.
    \end{align*}

\begin{prop}\cite{HRS}\label{tilt}
The category
\begin{align*}
    \mathcal{A}^\mu_\sigma=<\mathcal{T}^\mu_\sigma, \mathcal{F}^\mu_\sigma[1]>
\end{align*}
is the heart of a bounded t-structure on $\mathcal{D}$. 
\end{prop}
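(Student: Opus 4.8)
The plan is to reduce the statement to the classical tilting theorem of Happel, Reiten and Smal\o{} by first exhibiting $(\mathcal{T}^\mu_\sigma,\mathcal{F}^\mu_\sigma)$ as a torsion pair in the heart $\mathcal{A}$, and then running the general argument that tilting at a torsion pair produces a new heart. First I would record the two defining properties of a torsion pair. For the orthogonality $\Hom_{\mathcal{D}}(T,F)=0$ with $T\in\mathcal{T}^\mu_\sigma$ and $F\in\mathcal{F}^\mu_\sigma$: reading any such morphism through the HN filtrations of $T$ and $F$ supplied by axiom (2) of a weak stability condition, it would factor through a nonzero map from a semistable subquotient of slope $>\mu$ to one of slope $\leqslant\mu$, which is impossible since a nonzero morphism between semistable objects never strictly increases slope. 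For the decomposition property, given $E\in\mathcal{A}$ I would take its HN filtration $0=E_0\subset\cdots\subset E_n=E$ and locate the index $k$ such that the HN factors of $E_k$ all have slope $>\mu$ while those of $E/E_k$ have slope $\leqslant\mu$; setting $T:=E_k\in\mathcal{T}^\mu_\sigma$ and $F:=E/E_k\in\mathcal{F}^\mu_\sigma$ yields the required sequence $0\to T\to E\to F\to 0$.

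With the torsion pair in hand, I would describe $\mathcal{A}^\mu_\sigma$ cohomologically. Writing $H^i_{\mathcal{A}}\colon\mathcal{D}\to\mathcal{A}$ for the cohomology functors of the bounded t-structure with heart $\mathcal{A}$, I claim that the extension closure $\mathcal{A}^\mu_\sigma=\langle\mathcal{T}^\mu_\sigma,\mathcal{F}^\mu_\sigma[1]\rangle$ coincides with the full subcategory
\begin{align*}
    \mathcal{C}=\{E\in\mathcal{D}: H^i_{\mathcal{A}}(E)=0 \text{ for } i\neq 0,-1,\ H^0_{\mathcal{A}}(E)\in\mathcal{T}^\mu_\sigma,\ H^{-1}_{\mathcal{A}}(E)\in\mathcal{F}^\mu_\sigma\}.
\end{align*}
The inclusion $\mathcal{A}^\mu_\sigma\subseteq\mathcal{C}$ is immediate since $\mathcal{T}^\mu_\sigma$ and $\mathcal{F}^\mu_\sigma[1]$ each satisfy the cohomological conditions and $\mathcal{C}$ is closed under extensions by the long exact cohomology sequence; the reverse inclusion follows by splitting any $E\in\mathcal{C}$ through the truncation triangle $H^{-1}_{\mathcal{A}}(E)[1]\to E\to H^0_{\mathcal{A}}(E)$.

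I would then verify the two heart axioms for $\mathcal{A}^\mu_\sigma$. For the vanishing $\Hom_{\mathcal{D}}(A,B[n])=0$ with $n<0$ and $A,B\in\mathcal{A}^\mu_\sigma$: using the defining triangles that present $A$ and $B$ as extensions of objects in $\mathcal{T}^\mu_\sigma$ by objects of $\mathcal{F}^\mu_\sigma[1]$, the Hom group breaks into four pieces between objects of $\mathcal{A}$ shifted by integers. For $n\leqslant -2$ every piece vanishes by the vanishing $\Hom_{\mathcal{D}}(\mathcal{A}[k],\mathcal{A}[l])=0$ for $k>l$ already built into the original heart; for $n=-1$ all but one piece vanish for the same reason, and the surviving piece is exactly $\Hom(\mathcal{T}^\mu_\sigma,\mathcal{F}^\mu_\sigma)$, which vanishes precisely by the torsion-pair orthogonality established above. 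For the filtration axiom, I would start from the $\mathcal{A}$-cohomology filtration of an arbitrary $E\in\mathcal{D}$ (finite since the t-structure is bounded), apply the torsion-pair decomposition to each $H^i_{\mathcal{A}}(E)$, and then regroup the torsion part of each $H^i_{\mathcal{A}}(E)$ together with the torsion-free part of the neighbouring cohomology object (shifted appropriately) to assemble genuine $\mathcal{A}^\mu_\sigma$-objects sitting in strictly decreasing cohomological degrees.

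The hard part will be this last regrouping: converting the degreewise torsion-pair splittings into an honest $\mathcal{A}^\mu_\sigma$-filtration of $E$ requires a careful octahedral argument to merge adjacent pieces, and confirming that the resulting cones land in $\mathcal{A}^\mu_\sigma$ itself (not merely in its extension closure as abstractly built) is the delicate point. By contrast, the Hom-vanishing in the case $n=-1$ is comparatively routine once the orthogonality of the torsion pair is available, so the bookkeeping of the filtration is where I would expect to spend the most care; this is exactly the content of the theorem of Happel, Reiten and Smal\o{} cited as \cite{HRS}, on which I would model the argument.
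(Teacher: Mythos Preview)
The paper does not prove this proposition at all: it is stated with the citation \cite{HRS} and no proof environment follows. Your proposal is therefore not comparable to a paper proof but rather supplies what the paper omits, namely a sketch of the classical Happel--Reiten--Smal\o{} argument. That sketch is correct in outline: you rightly identify that the crux is showing $(\mathcal{T}^\mu_\sigma,\mathcal{F}^\mu_\sigma)$ is a torsion pair in $\mathcal{A}$ (using HN filtrations for both orthogonality and the decomposition), give the standard cohomological description of the tilted heart, and correctly isolate the octahedral regrouping for the filtration axiom as the part requiring the most care. One small phrasing issue: your sentence ``a nonzero morphism between semistable objects never strictly increases slope'' is the reverse of what you want to say---the relevant fact is that a nonzero morphism $A\to B$ between $\sigma$-semistable objects forces $\mu_\sigma(A)\leqslant\mu_\sigma(B)$, so no such morphism exists when the source has strictly larger slope than the target. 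With that corrected, your argument is the standard one and is sound.
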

Next we review the notion of tilt-stability. We will follow the exposition of \cite[Section 2]{BLMS}. For $j=0,\ldots, 3$, define $\Lambda^j_H\cong \Z^{j+1}$ as the lattice generated by vectors of the form
$$(H^3\ch_0(F),H^2\ch_1(F),\ldots,H^{3-j}\ch_j(F))\in \Q^{j+1}$$
together with the natural map $v^j_H:K(X)\to \Lambda^j_H$.\ 

Then the pair $(\Coh(X), Z_H)$ with
\begin{align*}
    Z_H(F)=-H^{2}\ch_1(F)+iH^3\ch_0(F)
\end{align*}
defines a weak stability condition with respect to $\Lambda_H^1$. In this case we can take the quadratic form $Q=0$ since $Z$ is injective. This notion of stability coincide with (Mumford-Takemoto)-slope-stability and will be referred to as such. We will use $\mu_H$ to denote the slope to $H$. Any slope-semistable sheaf $F$ satisfies the Bogomolov-Gieseker inequality:
\begin{align}\label{BG}
    \Delta_H(F):=(H^{2}\ch_1(F))^2-2H^3\ch_0(F)\cdot H\ch_2(F)\geqslant 0.
\end{align}
Choose a parameter $\beta\in\R$. By Proposition \ref{tilt},
\begin{defn}
    We denote by $\Coh^\beta_H(X)\subset \mathrm{D^b}(X)$ the heart of a bounded t-structure obtained by tilting the slope stability at $\mu_H=\beta$.
\end{defn}
\begin{rem}
    When $X$ is a Fano threefold of Picard rank $1$, we will always take the ample generator of the Picard group as $H$ and drop the subscript $H$ from related notations.
\end{rem}
For a coherent sheaf $F$, we consider the twisted Chern character $\ch^\beta(F)=e^{-\beta H}\ch(F)$. More explicitly:
\begin{align*}
    &\ch_0^\beta=\ch_0\\
    &\ch_1^\beta=\ch_1-\beta H\ch_0\\
        &\ch_2^\beta=\ch_2-\beta H\ch_1+\frac{\beta^2}{2}H^2\ch_0\\
    &\ch_3^\beta=\ch_3-\beta H\ch_2 +\frac{\beta^2}{2}H^2\ch_1-\frac{\beta^3}{6}H^3\ch_0.
    \end{align*}
\begin{prop}\cite[Proposition 2.12]{BLMS} Given $\alpha>0$, $\beta\in \R$, the pair $\sigma_{\alpha,\beta}=(\Coh^\beta(X),Z_{\alpha,\beta})$ with
\begin{align*}
    Z_{\alpha,\beta}=\half\alpha^2H^3\ch_0^\beta(F)-H\ch_2^\beta(F)+iH^{2}\ch_1^\beta(F)
\end{align*}
defines a weak stability condition on $\mathrm{D^b}(X)$ with respect to $\Lambda^2$. The quadratic form $Q$ can be given by the discriminant $\Delta_H$ defined in (\ref{BG}).\ 

These weak stability conditions vary continuously as $(\alpha,\beta)\in\R_{>0}\times\R$ varies. 
\end{prop}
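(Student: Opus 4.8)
The plan is to verify the three axioms of a weak stability condition for $\sigma_{\alpha,\beta}=(\Coh^\beta(X),Z_{\alpha,\beta})$ with respect to $\Lambda^2$, and then to read off continuity from the explicit shape of $Z_{\alpha,\beta}$. Throughout I use that the discriminant $\Delta_H$ of \eqref{BG} is unchanged when all Chern characters are replaced by their $\beta$-twists (a one-line check), and I write $(\mathcal{T}^\beta,\mathcal{F}^\beta)$ for the torsion pair defining $\Coh^\beta(X)$, so $\mathcal{H}^{-1}(F)\in\mathcal{F}^\beta$ and $\mathcal{H}^0(F)\in\mathcal{T}^\beta$ for every $F\in\Coh^\beta(X)$. \emph{Step 1: $Z_{\alpha,\beta}$ is a weak stability function.} For $0\ne F\in\Coh^\beta(X)$, additivity of $\Im Z_{\alpha,\beta}=H^2\ch_1^\beta$ together with the defining properties of the tilt ($H^2\ch_1^\beta>0$ on a torsion-free slope-semistable sheaf of slope $>\beta$, $H^2\ch_1\ge0$ on a torsion sheaf, $H^2\ch_1^\beta\le0$ on $\mathcal{F}^\beta$) give $\Im Z_{\alpha,\beta}(F)\ge0$. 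If $\Im Z_{\alpha,\beta}(F)=0$, the same additivity forces $\mathcal{H}^{-1}(F)$ to be slope-semistable of slope $\beta$ (or $0$) and $\mathcal{H}^0(F)$ to be a torsion sheaf of dimension $\le1$ (or $0$); then $H^3\ch_0(F)=-H^3\ch_0(\mathcal{H}^{-1}(F))\le0$, the inequality $H\ch_2^\beta(\mathcal{H}^{-1}(F))\le0$ follows from $\Delta_H(\mathcal{H}^{-1}(F))\ge0$ (the classical Bogomolov--Gieseker inequality \eqref{BG}) together with $H^2\ch_1^\beta(\mathcal{H}^{-1}(F))=0$, and $H\ch_2(\mathcal{H}^0(F))\ge0$ since $\mathcal{H}^0(F)$ has dimension $\le1$, so $\Re Z_{\alpha,\beta}(F)=\tfrac{1}{2}\alpha^2H^3\ch_0(F)-H\ch_2^\beta(F)\le0$.

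\emph{Step 2: Harder--Narasimhan filtrations and negative definiteness of $Q$.} The heart $\Coh^\beta(X)$ is Noetherian, being a tilt of the Noetherian heart $\Coh(X)$ at a torsion pair for which the tilt remains Noetherian (cf.\ \cite{BMT}); together with the boundedness of the invariants $(H^3\ch_0,H^2\ch_1^\beta)$ along chains of subobjects of a fixed object, the standard argument for weak stability conditions produces HN filtrations, the only delicate point being the non-discreteness of $\Im Z_{\alpha,\beta}$ for irrational $\beta$, which is handled as in \cite{BLMS}. For the quadratic form, take $Q=\Delta_H$, viewed as a form on $\Lambda^2\otimes\R$. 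If $v\in\ker Z_{\alpha,\beta}$, then $H^2\ch_1^\beta(v)=0$ and $H\ch_2^\beta(v)=\tfrac{1}{2}\alpha^2H^3\ch_0(v)$, so by twist-invariance $\Delta_H(v)=-\alpha^2(H^3\ch_0(v))^2\le0$, with equality only when $v=0$; hence $Q$ is negative definite on $\ker Z_{\alpha,\beta}$, while the remaining condition $Q(F)\ge0$ on $\sigma_{\alpha,\beta}$-semistable $F$ is the content of Step 3.

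\emph{Step 3: $\Delta_H(F)\ge0$ for $\sigma_{\alpha,\beta}$-semistable $F$ — the main obstacle.} This is the Bogomolov--Gieseker inequality for tilt-stability, \cite[Corollary 3.3.3]{BMT}; everything else above is essentially bookkeeping, so I expect this to be the hard part. One proceeds in two stages. First, for fixed $\beta$ and $\alpha\gg0$ (large-volume limit), one shows that a $\sigma_{\alpha,\beta}$-semistable object of nonzero rank is, up to shift, a slope-semistable sheaf, so $\Delta_H\ge0$ by \eqref{BG}; the rank-zero case is immediate since then $\Delta_H(F)=(H^2\ch_1(F))^2\ge0$. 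Second, one propagates the inequality to all $\alpha>0$ by wall-crossing: the semistable objects of a fixed class are constant in each chamber and change across only locally finitely many walls, and if $F$ becomes strictly semistable on a wall with Jordan--H\"older factors $F_i$ (each of strictly smaller $\Delta_H$, hence $\Delta_H(F_i)\ge0$ by induction), then on the wall the central charges $Z_{\alpha,\beta}(F_i)$ are positively proportional, which makes the mixed terms in $\Delta_H(\sum_i F_i)$ non-negative and yields $\Delta_H(F)\ge\sum_i\Delta_H(F_i)\ge0$; the required positivity of the mixed terms is a signature computation for $\Delta_H$ restricted to the plane spanned by the classes $v^2_H(F_i)$.

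\emph{Step 4: continuity.} The map $(\alpha,\beta)\mapsto Z_{\alpha,\beta}\in\Hom(\Lambda^2,\C)$ is given by explicit polynomials in $\alpha,\beta$, hence is continuous. For fixed $\beta$ the heart $\Coh^\beta(X)$ does not depend on $\alpha$, so Bridgeland's deformation theorem (applied with the uniform quadratic form $Q=\Delta_H$) gives continuity, indeed holomorphy, in $\alpha$. When $\beta$ crosses a value $\mu_H(T)$ of a slope-semistable sheaf $T$ the heart $\Coh^\beta(X)$ jumps, but the objects entering or leaving do so with $\Im Z_{\alpha,\beta}\to0$, so the associated slicings still vary continuously; combining both directions yields continuity on all of $\R_{>0}\times\R$.
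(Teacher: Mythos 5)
This proposition is not proved in the paper at all: it is quoted verbatim from \cite[Proposition 2.12]{BLMS}, whose proof in turn rests on the tilt-stability machinery of \cite{BMT} and \cite{BMS}. So there is nothing in the paper to compare your argument against line by line; what you have written is essentially the standard proof from those references, and it is correct in outline. Your Steps 1, 2 and 4 are accurate and complete enough: the positivity of $\Im Z_{\alpha,\beta}=H^2\ch_1^\beta$ on the tilted heart, the identification of the degenerate case ($\mathcal{H}^{-1}(F)$ slope-semistable of slope $\beta$, $\mathcal{H}^0(F)$ of dimension $\leqslant 1$) and the resulting sign of $\Re Z_{\alpha,\beta}$ via the classical Bogomolov inequality, the computation $\Delta_H(v)=-\alpha^2(H^3\ch_0(v))^2$ on $\ker Z_{\alpha,\beta}$, and the continuity statement are all exactly as in the sources. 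You also correctly isolate the one genuinely hard ingredient, namely $\Delta_H(F)\geqslant 0$ for tilt-semistable $F$ (the generalized Bogomolov--Gieseker inequality for tilt-stability), and your two-stage plan --- large-volume limit reducing to (2-Gieseker-)semistable sheaves, then propagation across walls using that $\Delta_H$ is superadditive on classes with positively proportional central charges --- is the argument of \cite{BMS} (see their Theorem 3.5 and Appendix A); the ``signature computation'' you allude to is precisely the point that $\Delta_H$ is negative definite on $\ker Z_{\alpha,\beta}$ and hence indefinite of the right signature on the relevant $2$-plane. That step is only sketched in your write-up, but since the paper itself takes the whole proposition as a black box from the literature, deferring it to \cite{BMT}\cite{BMS} is entirely appropriate. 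No gaps beyond that deliberate deferral.
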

When the choices of $(\alpha,\beta)$ are clear, $\sigma_{\alpha,\beta}$-(semi)stability is usually referred to as \emph{tilt-(semi)stability}. The notion of 2-Gieseker-stability occurs as limit of tilt stability:
\begin{prop}\cite[Proposition 4.8]{BBF et}\cite[Proposition 14.2]{Br08}\cite[Theorem 5.2]{JM}\label{2G}
Let $F\in \mathrm{D^b}(X)$ and $\beta<\mu(F)$. Then $F\in \Coh^\beta(X)$ and $F$ is $\sigma_{\alpha,\beta}$-(semi)stable for $\alpha\gg 0$ if and only if $F\in\Coh(X)$ and $F$ is 2-Gieseker-(semi)stable.
\end{prop}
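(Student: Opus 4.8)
This is a ``large volume limit'' statement, and the plan is to prove it by letting $\alpha\to\infty$ and comparing the tilt-slope $\mu_{\sigma_{\alpha,\beta}}$ with the pre-order on $P_2$. Two ingredients drive the argument. The first is a \emph{slope dictionary}: for a short exact sequence of sheaves $0\to A\to F\to B\to 0$, the sign of $\mu_{\sigma_{\alpha,\beta}}(A)-\mu_{\sigma_{\alpha,\beta}}(B)$ stabilizes as $\alpha\to\infty$ and is governed first by the ordinary slopes $\mu(A),\mu(B)$ and, when these coincide, by $\frac{H\ch_2^\beta(A)}{\ch_0(A)}$ versus $\frac{H\ch_2^\beta(B)}{\ch_0(B)}$. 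The second is \emph{wall-finiteness}: for fixed $F$ and fixed $\beta$, only finitely many numerical classes of subobjects can destabilize $F$, which follows from the Bogomolov--Gieseker inequality, i.e.\ from positivity of the quadratic form $Q=\Delta_H$ on tilt-semistable objects. Throughout I would treat the main case $\ch_0(F)>0$; the case $\ch_0(F)=0$ (a torsion sheaf, so $\mu(F)=+\infty$) is handled by the same ideas and is easier, while $\ch_0(F)<0$ is essentially vacuous once $\beta<\mu(F)$ is imposed.

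\emph{Step 1: the two membership clauses agree, and $F$ is a sheaf.} If $F$ is 2-Gieseker-semistable it is slope-semistable with $\mu_{\min}(F)=\mu(F)>\beta$, so automatically $F\in\mathcal T^\beta\subset\Coh^\beta(X)$. Conversely, suppose $F\in\Coh^\beta(X)$ is $\sigma_{\alpha,\beta}$-semistable for $\alpha\gg0$ with $\beta<\mu(F)$. The canonical triangle $\cH^{-1}(F)[1]\to F\to\cH^0(F)$ exhibits $\cH^{-1}(F)[1]$ as a subobject of $F$ in $\Coh^\beta(X)$; since $\cH^{-1}(F)\in\mathcal F^\beta$ is torsion-free with $H^2\ch_1^\beta(\cH^{-1}(F))\le 0$, a computation of the leading behaviour of the tilt-slopes shows that if $\cH^{-1}(F)\neq 0$ then $\mu_{\sigma_{\alpha,\beta}}(\cH^{-1}(F)[1])>\mu_{\sigma_{\alpha,\beta}}(F)$ for $\alpha\gg0$, where the hypothesis $\beta<\mu(F)$ rules out the degenerate case in which $\cH^{-1}(F)$ and $\cH^0(F)$ are concentrated at slope $\beta$. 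This contradicts semistability, so $F\in\Coh(X)$; a similar comparison applied to a putative torsion subsheaf of $F$ forces $F$ to be torsion-free. From now on $F$ is a torsion-free sheaf lying in $\mathcal T^\beta$.

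\emph{Step 2: the dictionary, wall-finiteness, and the two implications.} For an exact sequence $0\to A\to F\to B\to 0$ in $\Coh^\beta(X)$, one first checks $\cH^{-1}(A)=0$, so that $A$ is a sheaf with $\ker(A\to F)=\cH^{-1}(B)$ and $\mathrm{coker}(A\to F)=\cH^0(B)$; if $\cH^{-1}(B)\neq 0$, then $\mu(A)$ is a rank-weighted average of $\mu(\mathrm{im}(A\to F))\le\mu(F)$ and $\mu(\cH^{-1}(B))\le\beta<\mu(F)$, hence $\mu(A)<\mu(F)$ strictly, which by the leading-term comparison gives $\mu_{\sigma_{\alpha,\beta}}(A)<\mu_{\sigma_{\alpha,\beta}}(F)$ for $\alpha\gg0$; thus every subobject destabilizing $F$ for large $\alpha$ is an honest subsheaf. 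For such a subsheaf one computes
\begin{align*}
\Re Z_{\alpha,\beta}(B)\,\Im Z_{\alpha,\beta}(A)-\Re Z_{\alpha,\beta}(A)\,\Im Z_{\alpha,\beta}(B)&=\half\alpha^2H^3\big(\ch_0(B)\,H^2\ch_1^\beta(A)-\ch_0(A)\,H^2\ch_1^\beta(B)\big)\\
&\quad-\big(H\ch_2^\beta(B)\,H^2\ch_1^\beta(A)-H\ch_2^\beta(A)\,H^2\ch_1^\beta(B)\big),
\end{align*}
and, since $\ch_0(B)\,H^2\ch_1^\beta(A)-\ch_0(A)\,H^2\ch_1^\beta(B)=H^3\ch_0(A)\ch_0(B)\big(\mu(A)-\mu(B)\big)$, the sign for $\alpha\gg0$ is controlled by $\mu(A)-\mu(B)$ and, in the borderline case $\mu(A)=\mu(B)=\mu(F)$, by $\frac{H\ch_2^\beta(A)}{\ch_0(A)}-\frac{H\ch_2^\beta(B)}{\ch_0(B)}$. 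On the other side, Hirzebruch--Riemann--Roch (with $\mathrm{td}(\mathcal T_X)=(1,1,1+\frac{d}{3},1)$) gives $a_3\propto\ch_0$, $a_2$ affine in $(\ch_0,H^2\ch_1)$ and $a_1$ affine in $(\ch_0,H^2\ch_1,H\ch_2)$, so the pre-order on reduced $P_2$-polynomials is controlled by exactly those two quantities; hence $P_2(A)\preceq P_2(F/A)\iff\mu_{\sigma_{\alpha,\beta}}(A)\le\mu_{\sigma_{\alpha,\beta}}(F/A)$ for $\alpha\gg0$, and likewise with strict inequalities. Wall-finiteness holds because a destabilizing subsheaf satisfies $0\le\ch_0(A)\le\ch_0(F)$ and $0\le H^2\ch_1^\beta(A)\le H^2\ch_1^\beta(F)$, while $\Delta_H(A)\ge0$ (and the same for $F/A$) bounds $H\ch_2^\beta(A)$ from both sides; each of the finitely many surviving classes contributes at most one critical value of $\alpha^2$. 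Assembling: if $F$ is 2-Gieseker-(semi)stable, then for $\alpha$ past all these critical values no subsheaf destabilizes $F$, so $F$ is $\sigma_{\alpha,\beta}$-(semi)stable for $\alpha\gg0$; conversely, if $F$ is $\sigma_{\alpha,\beta}$-semistable for $\alpha\gg0$ it is slope-semistable (otherwise its maximal slope-destabilizing subsheaf would lie in $\mathcal T^\beta$ and destabilize $F$ for $\alpha\gg0$), and if $F$ were not 2-Gieseker-semistable its maximal $P_2$-destabilizing subsheaf $A$ would satisfy $\mu(A)=\mu(F)>\beta$, hence lie in $\mathcal T^\beta$, and would destabilize $F$ for $\alpha\gg0$ by the dictionary --- a contradiction. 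The ``stable'' statements follow identically with strict inequalities.

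The step I expect to be the real obstacle is the \textbf{wall-finiteness and the uniformity of ``$\alpha\gg 0$''} in Step 2: one must ensure that a single $\alpha$ defeats all potential destabilizers at once, which rests on the Bogomolov--Gieseker inequality together with a careful handling of the borderline slope case and of rank-zero sub- and quotient-sheaves. The slope/$P_2$ dictionary itself is, by contrast, just the Riemann--Roch bookkeeping recorded above.
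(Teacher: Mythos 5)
This proposition is not proved in the paper at all: it is quoted verbatim from the literature, with the proof deferred to \cite[Proposition 4.8]{BBF et}, \cite[Proposition 14.2]{Br08} and \cite[Theorem 5.2]{JM}. So there is no in-paper argument to compare against; what you have written is a reconstruction of the standard ``large volume limit'' proof from those references, and in outline it is the right argument. The slope dictionary is correct: your cross-term identity for $\Re Z_{\alpha,\beta}(B)\Im Z_{\alpha,\beta}(A)-\Re Z_{\alpha,\beta}(A)\Im Z_{\alpha,\beta}(B)$ checks out, the reduction of the borderline case $\mu(A)=\mu(B)$ to comparing $H\ch_2^\beta/\ch_0$ (which equals the untwisted comparison when the slopes agree) matches the Riemann--Roch expansion of the reduced $P_2$-polynomials, and your observations that (i) subobjects $A\subset F$ in $\Coh^\beta(X)$ with $\cH^{-1}(F/A)\neq 0$ have $\mu(A)<\mu(F)$ and (ii) any slope- or $P_2$-destabilizing subsheaf of a slope-semistable $F$ with $\mu=\mu(F)>\beta$ automatically lies in $\mathcal{T}^\beta$, are exactly what makes the two notions of (de)stabilizer match up.

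The one step that does not hold as written is the wall-finiteness bound: for a destabilizing subobject $A$ of $F$ in $\Coh^\beta(X)$ one does \emph{not} have $0\leqslant\ch_0(A)\leqslant\ch_0(F)$ in general, because the sheaf map $A\to F$ can have kernel $\cH^{-1}(F/A)\in\mathcal{F}^\beta$ of arbitrarily large rank, so $\ch_0(A)=\ch_0(\mathrm{im})+\ch_0(\cH^{-1}(F/A))$ is a priori unbounded. The correct route to finiteness of walls along the vertical ray $\{\beta=\mathrm{const},\ \alpha>0\}$ is the support property: at a wall the classes $Z_{\alpha,\beta}(A)$ and $Z_{\alpha,\beta}(F)$ are proportional, and the quadratic form $\Delta_H$ then satisfies $\Delta_H(A)+\Delta_H(F/A)\leqslant\Delta_H(F)$ with both summands nonnegative; combined with $0\leqslant H^2\ch_1^\beta(A)\leqslant H^2\ch_1^\beta(F)$ and the wall equation this leaves only finitely many numerical classes and hence finitely many critical values of $\alpha$ (this is the content of \cite[Appendix A]{BMS} and of the wall structure theorems in \cite{JM}). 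You correctly identified this as the load-bearing step, but the justification you give for it needs to be replaced by the discriminant argument; once that is done, the uniformity of ``$\alpha\gg0$'' over all destabilizers follows and the rest of your proof goes through.
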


We will also need to following result of \cite[Proposition 3.2]{Li}. The original proposition in \cite{Li} is more general, our version follows from it by explicit computation.
\begin{prop}\cite[Proposition 3.2]{Li}\label{Libound}
    Let $X$ be a Fano threefold of Picard rank $1$ and index $2$ and $F$ be $\sigma_{\alpha,\beta}$-stable with $\ch_0(F)\neq 0$ for some $\alpha>0, \beta\in \R$. 
    \begin{enumerate}
        \item If $d=5$ and $-\sqrt{\frac{3}{20}}\leqslant\mu_H(F)\leqslant \sqrt{\frac{3}{20}}$, then $\frac{H\ch_2(F)}{H^3\ch_0(F)}\leqslant0$.
        \item If $d=4$ and $\frac{\sqrt{3}}{4}\leqslant|\mu_H(F)|\leqslant1-\frac{\sqrt{3}}{4}$, then $\frac{H\ch_2(F)}{H^3\ch_0(F)}\leqslant\half(\mu_H(F))^2-\frac{3}{32}$.
        \item If $d=3$ and $-\half\leqslant\mu_H(F)\leqslant\half$, then $\frac{H\ch_2(F)}{H^3\ch_0(F)}\leqslant0$.
        \item If $d=3$ and $\half<|\mu_H(F)|\leqslant1$, then $\frac{H\ch_2(F)}{H^3\ch_0(F)}\leqslant|\mu_H(F)|-\half$.
        \item If $d=2$ and $-\half\leqslant\mu_H(F)\leqslant\half$, then $\frac{H\ch_2(F)}{H^3\ch_0(F)}\leqslant0$.
    \end{enumerate}
    Moreover, if the equality holds in any of the cases above, then $\ch_0(F)$ is $1$ or $2$.
\end{prop}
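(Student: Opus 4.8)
The inequality (\ref{BG}) already constrains tilt-stable objects, and the proposition sharpens it on prescribed slope windows; the plan is to obtain it by specializing the general bound of \cite[Proposition 3.2]{Li} to each $X_d$ and then analysing the equality locus. Throughout I record a $\sigma_{\alpha,\beta}$-stable object $F$ with $\ch_0(F)\neq 0$ by the pair $\mu:=\mu_H(F)$ and $\nu(F):=\frac{H\ch_2(F)}{H^3\ch_0(F)}$, and for exposition I take $\ch_0(F)>0$ (the opposite sign being symmetric, as the bound involves only these ratios). In these coordinates (\ref{BG}) reads $\nu(F)\leqslant\frac12\mu^2$, i.e. $(\mu,\nu(F))$ lies on or below the parabola $P\colon \nu=\frac12\mu^2$. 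The content of \cite[Proposition 3.2]{Li} is that a tilt-stable object cannot lie above the largest numerical wall cut out by certain fixed low-rank tilt-stable objects of $X$, and this is what I would specialize.

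First I would identify the controlling objects on each $X_d$. The line bundles $\cO_X(k)$ lie exactly on $P$ at $(k,\frac12 k^2)$, while the rank-two spinor/homogeneous bundles on $X_5$ and $X_4$ lie just below it; Li's bound on a given window is governed by the nearest such object, and two shapes occur. When the controlling object is a line bundle $\cO_X(k)$ the bound is its tangent line to $P$, namely $\nu\leqslant k\mu-\frac12 k^2$: taking $k=0$ (the object $\cO_X$) yields $\nu\leqslant 0$, which is cases (1),(3),(5), and taking $k=\pm1$ yields $\nu\leqslant|\mu|-\frac12$, which is case (4); the transition $|\mu|=\frac12$ for the cubic is precisely where the tangents at $\cO_X$ and $\cO_X(\pm1)$ meet. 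When the controlling object is a rank-two bundle $G$ lying below $P$, the bound is the shifted parabola $\nu\leqslant\frac12\mu^2-\delta$ with $\delta=\frac{\Delta_H(G)}{2(H^3\ch_0(G))^2}$; for $d=4$ the bundle with $\ch_0=2,\ \ch_1=H$ gives $\delta=\frac{3}{32}$, which is case (2). Substituting $H^3=d$ into Li's formulas and simplifying is then elementary.

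The next step is to pin the windows. I would verify that on each stated interval the named object really is the binding one, i.e. its wall lies below those of all competing candidates (the finitely many relevant line bundles and the rank-two bundle on each $X_d$). This comparison is what produces the numerical endpoints $\sqrt{3/20}$ for $d=5$ and $\frac{\sqrt{3}}{4},\,1-\frac{\sqrt{3}}{4}$ for $d=4$: they are exactly the slopes at which control passes between $\cO_X$ and the rank-two bundle. Carrying this out requires the explicit Chern characters of those bundles, which I would compute from the geometry of $X_d$.

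Finally, for the ``moreover'' clause I would track equality. If $\nu(F)$ equals the boundary value then $(\mu,\nu(F))$ lies on the largest wall, so $F$ is strictly semistable there together with the controlling object $G$, and its Jordan--H\"older factors are numerically controlled by the class $[G]$. Since the support-property form $\Delta_H$ is nonnegative on each factor but degenerates on the rank-two lattice spanned by $[F]$ and $[G]$ at equality, $\ch_0(F)$ is forced into a small combination of $\ch_0(G)\in\{1,2\}$, leaving only $\ch_0(F)\in\{1,2\}$. I expect the main obstacle to be not any single computation but the bookkeeping: correctly identifying the rank-two controlling bundles on $X_4$ and $X_5$ and their Chern characters, checking that they (and not some other object) give the binding bound on exactly the stated windows, and extracting the sharp rank bound --- rather than a weaker one --- from the equality case.
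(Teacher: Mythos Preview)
The paper does not give its own proof of this proposition: it is quoted from \cite[Proposition~3.2]{Li}, with only the remark that ``the original proposition in \cite{Li} is more general, our version follows from it by explicit computation.'' So there is nothing to compare against beyond that one sentence.

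Your outline is a reasonable account of what that explicit computation amounts to, and is in fact considerably more detailed than anything the paper records. A couple of cautions. First, your framing of Li's bound in terms of ``controlling objects'' and ``largest walls'' is a useful heuristic, but Li's actual argument proceeds via restriction to hyperplane sections and Bogomolov-type inequalities on the resulting surfaces; the objects you name (line bundles, the rank-two tautological/spinor bundles on $X_5$ and $X_4$) are indeed the extremal cases, but the bound is not literally obtained by wall-crossing against them. Second, your equality analysis is a bit loose: you would not conclude $\ch_0(F)\in\{1,2\}$ from a Jordan--H\"older decomposition at a wall, but rather from the fact that equality in Li's inequality forces the restricted sheaf on a hyperplane section to be extremal for the surface Bogomolov bound, and the classification of such extremal classes on the del Pezzo surface pins the rank. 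If you want to fill this in honestly you should consult \cite{Li} directly rather than reconstruct the mechanism; for the purposes of this paper the statement is simply imported.
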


Finally we recall a variant of the tilt stability conditions, which is needed in the next section. Fix $\mu\in \R$, apply Proposition \ref{tilt} to $\sigma_{\alpha,\beta}$, we obtain a heart, which we denote by
\begin{align*}
    \Coh^\mu_{\alpha,\beta}(X):=\mathcal{A}^\mu_{\sigma_{\alpha,\beta}}.
\end{align*}
Let $u\in\C$ be the unit vector in the upper half plane with $\mu=-\frac{\Re u}{\Im u}$.
\begin{prop}\cite[Proposition 2.15]{BLMS}\label{2nd tilt}
The pair $\sigma^\mu_{\alpha,\beta}:=(\Coh^\mu_{\alpha,\beta}(X), Z^\mu_{\alpha,\beta})$, where 
\begin{align*}
    Z^\mu_{\alpha,\beta}:=\frac{1}{u}Z_{\alpha,\beta}
\end{align*}
is a weak stability condition on $\mathrm{D^b}(X)$ with respect to $\Lambda^2$.

\end{prop}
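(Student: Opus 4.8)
The plan is to verify directly the three axioms in the definition of a weak stability condition for the pair $\sigma^\mu_{\alpha,\beta}=(\Coh^\mu_{\alpha,\beta}(X),Z^\mu_{\alpha,\beta})$, using throughout that multiplication by $1/u$ is the rotation of the plane that realigns the tilted heart with the upper half plane. Write $u=e^{i\pi\phi_0}$ with $\phi_0\in(0,1)$, so that $\mu=-\cot(\pi\phi_0)$ and $1/u=e^{-i\pi\phi_0}$ rotates clockwise by $\pi\phi_0$. Since $\Coh^\mu_{\alpha,\beta}(X)=\mathcal{A}^\mu_{\sigma_{\alpha,\beta}}$ is already the heart of a bounded t-structure by Proposition \ref{tilt}, it remains to check that (i) $Z^\mu_{\alpha,\beta}$ restricts to a weak stability function, (ii) Harder--Narasimhan filtrations exist, and (iii) the support property holds.

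For (i), any $F\in\Coh^\mu_{\alpha,\beta}(X)$ sits in a short exact sequence $0\to H^{-1}(F)[1]\to F\to H^0(F)\to 0$ (cohomology taken in $\Coh^\beta(X)$) with $H^0(F)\in\mathcal{T}^\mu_{\sigma_{\alpha,\beta}}$ and $H^{-1}(F)\in\mathcal{F}^\mu_{\sigma_{\alpha,\beta}}$, so by additivity it suffices to locate the image of $Z^\mu_{\alpha,\beta}$ on the two pieces. An object of $\mathcal{T}^\mu_{\sigma_{\alpha,\beta}}$ has all $\sigma_{\alpha,\beta}$-HN factors of slope $>\mu$, so each factor has $Z_{\alpha,\beta}$ of argument in $(\pi\phi_0,\pi]$ (or $Z_{\alpha,\beta}=0$ for a slope-$+\infty$ piece with vanishing central charge); after multiplying by $1/u$ the argument lands in $(0,\pi(1-\phi_0)]$, so $\Im Z^\mu_{\alpha,\beta}>0$ unless $Z^\mu_{\alpha,\beta}=0$. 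Dually, an object of $\mathcal{F}^\mu_{\sigma_{\alpha,\beta}}$ has finite slope $\leqslant\mu$, so its factors have $Z_{\alpha,\beta}$ of argument in $(0,\pi\phi_0]$ with $Z_{\alpha,\beta}\neq 0$; after the shift by $[1]$ and multiplication by $1/u$ the argument lands in $(\pi(1-\phi_0),\pi]$, giving $\Im Z^\mu_{\alpha,\beta}\geqslant 0$ with equality only at argument $\pi$, i.e.\ $\Re Z^\mu_{\alpha,\beta}<0$. Adding the two contributions yields $\Im Z^\mu_{\alpha,\beta}(F)\geqslant 0$, and if this vanishes the $\mathcal{T}^\mu$-part must have $Z^\mu_{\alpha,\beta}=0$ while the $\mathcal{F}^\mu[1]$-part has $\Re Z^\mu_{\alpha,\beta}\leqslant 0$, whence $\Re Z^\mu_{\alpha,\beta}(F)\leqslant 0$. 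This is exactly the weak stability function property. The bookkeeping of the degenerate pieces --- objects with $Z_{\alpha,\beta}=0$ and the slope-$+\infty$ objects sitting on the negative real axis --- is the one point that requires care, and is the main obstacle in this step.

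For (ii) and (iii) I would transport the corresponding properties of $\sigma_{\alpha,\beta}$, the point being that $\sigma^\mu_{\alpha,\beta}$ has the same semistable objects as $\sigma_{\alpha,\beta}$ up to shift. Indeed, the computation in (i) shows that the $\mu_{\sigma^\mu_{\alpha,\beta}}$-ordering on $\Coh^\mu_{\alpha,\beta}(X)$ is the $\sigma_{\alpha,\beta}$-phase ordering restricted to slopes $>\mu$ on $\mathcal{T}^\mu_{\sigma_{\alpha,\beta}}$, together with its shift by $[1]$ on $\mathcal{F}^\mu_{\sigma_{\alpha,\beta}}$; hence a nonzero object is $\sigma^\mu_{\alpha,\beta}$-(semi)stable exactly when it is a $\sigma_{\alpha,\beta}$-(semi)stable object of slope $>\mu$, or the shift by $[1]$ of one of slope $\leqslant\mu$. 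Consequently the HN filtration of $F$ for $\sigma^\mu_{\alpha,\beta}$ is assembled from the HN filtrations of $H^0(F)$ and $H^{-1}(F)$ for $\sigma_{\alpha,\beta}$, which exist because $\sigma_{\alpha,\beta}$ is a weak stability condition; this gives (ii). For (iii), note $\ker Z^\mu_{\alpha,\beta}=\ker Z_{\alpha,\beta}$ since $1/u\neq 0$, so the same quadratic form $Q=\Delta_H$ from \cite[Proposition 2.12]{BLMS} remains negative definite on the kernel; moreover $\Delta_H$ depends only on the class in $\Lambda^2$ and is invariant under $[1]$ (the shift negates every $\ch_i$ while $\Delta_H$ is homogeneous of degree two), so $Q\geqslant 0$ on $\sigma^\mu_{\alpha,\beta}$-semistable objects follows from $Q\geqslant 0$ on $\sigma_{\alpha,\beta}$-semistable objects.

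Conceptually, the cleanest way to package (ii) and (iii) is to observe that $\sigma^\mu_{\alpha,\beta}$ is obtained from $\sigma_{\alpha,\beta}$ by acting with the rotation $\tilde g\in\tilde{\mathrm{GL}}_2^+(\R)$ determined by $\phi_0$, under which the HRS-tilted heart agrees with the rotated slicing window; since the $\tilde{\mathrm{GL}}_2^+(\R)$-action preserves all the axioms of a (weak) stability condition, (ii) and (iii) become automatic once (i) is known. I would present the direct transport argument as the primary route and invoke the group action only as a conceptual check, since identifying the HRS-tilt with the rotated slicing window is itself a point where one must argue slightly carefully.
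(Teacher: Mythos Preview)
The paper does not supply its own proof of this proposition; it is stated with a citation to \cite[Proposition 2.15]{BLMS} and used as a black box. Your outline is essentially the argument one finds in \cite{BLMS}: verify that the rotation by $1/u$ realigns the central charge on the tilted heart with the closed upper half plane (your step (i)), and then transport the HN property and the support form $Q=\Delta_H$ from $\sigma_{\alpha,\beta}$ using that the two pairs have the same semistable objects up to shift. The one technical point you correctly flag---the handling of objects in $\Coh^\beta(X)$ with $Z_{\alpha,\beta}=0$---is dealt with in \cite{BLMS} exactly as you indicate: such objects lie in $\mathcal{T}^\mu_{\sigma_{\alpha,\beta}}$, keep $Z^\mu_{\alpha,\beta}=0$ and hence $\sigma^\mu$-slope $+\infty$, so the weak stability function axiom holds and they cause no obstruction to assembling HN filtrations. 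Your closing remark that this is nothing but the $\tilde{\mathrm{GL}}_2^+(\R)$-rotation applied to a weak stability condition is precisely the conceptual content of the cited result.
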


\subsection{Stability conditions on the Kuznetsov component} Let $X$ be a Fano threefold of Picard rank $1$ and index $2$. We will induce stability conditions on $\mathsf{Ku}(X)$ from the weak stability conditions $\sigma^0_{\alpha,\beta}$ of Proposition \ref{2nd tilt}. Set $\mathcal{A}(\alpha,\beta):=\Coh^0_{\alpha,\beta}(X)\cap \mathsf{Ku}(X)$ and $Z(\alpha,\beta)=Z^0_{\alpha,\beta}|_{\mathsf{Ku}(X)}$. 
\begin{thm}\cite[Theorem 6.8]{BLMS}\cite[Theorem 3.3]{PY}\label{Kustab}
    Suppose $-\half\leqslant\beta<0, 0<\alpha<-\beta,$ or $-1<\beta<-\half, 0<\alpha\leqslant 1+\beta$. Then the pair
    \begin{align*}
        \sigma(\alpha,\beta):=(\mathcal{A}(\alpha,\beta),Z(\alpha,\beta))
    \end{align*}
    is a Bridgeland stability condition on $\mathsf{Ku}(X)$ with respect to $\Lambda^2_{\mathsf{Ku}(X)}$, where 
    \begin{align*}
        \Lambda^2_{\mathsf{Ku}(X)}:=\mbox{Im}(K(\mathsf{Ku}(X))\to K(X)\to \Lambda^2)\cong \Z^2.
    \end{align*}
\end{thm}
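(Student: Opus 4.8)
The plan is to obtain $\sigma(\alpha,\beta)$ by applying the general criterion of Bayer--Lahoz--Macr\`i--Stellari for inducing a Bridgeland stability condition on the right orthogonal complement of an exceptional collection, here to the weak stability condition $\sigma^0_{\alpha,\beta}=(\Coh^0_{\alpha,\beta}(X),Z^0_{\alpha,\beta})$ of Proposition~\ref{2nd tilt} on $\mathrm{D^b}(X)$ together with the semiorthogonal decomposition $\mathrm{D^b}(X)=\langle\mathsf{Ku}(X),\cO_X,\cO_X(1)\rangle$; under this decomposition $\mathsf{Ku}(X)=\langle\cO_X,\cO_X(1)\rangle^{\perp}$, so that $G\in\mathsf{Ku}(X)$ if and only if $\H^{\bullet}(X,G)=\H^{\bullet}(X,G(-1))=0$. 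Granting that criterion, the theorem reduces to three verifications: (i) suitable shifts of $\cO_X$ and $\cO_X(1)$ lie in the twice-tilted heart $\Coh^0_{\alpha,\beta}(X)$ and are correctly placed with respect to its slicing, which forces $\mathcal{A}(\alpha,\beta)=\Coh^0_{\alpha,\beta}(X)\cap\mathsf{Ku}(X)$ to be the heart of a bounded t-structure on $\mathsf{Ku}(X)$; (ii) the central charge is non-degenerate, i.e.\ $Z(\alpha,\beta)(F)\neq 0$ for every nonzero $F\in\mathcal{A}(\alpha,\beta)$; and (iii) a support property holds; the Harder--Narasimhan property is then automatic.

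For (i) I would compute the twisted Chern characters $\ch^{\beta}(\cO_X)$ and $\ch^{\beta}(\cO_X(1))$, use that line bundles are $\mu_H$-stable, hence tilt-semistable (by Proposition~\ref{2G} for $\alpha\gg 0$ together with a continuity/wall-crossing argument, or directly), and evaluate their $\sigma_{\alpha,\beta}$-slopes in order to locate these objects and their relevant shifts inside the torsion pair defining $\Coh^0_{\alpha,\beta}(X)$. Requiring each to land in $\Coh^0_{\alpha,\beta}(X)$ in the position demanded by the criterion produces a system of inequalities in $(\alpha,\beta)$ whose solution is exactly the triangular region in the statement; the two pieces of the region, glued along $\beta=-\frac{1}{2}$, reflect which of $\cO_X,\cO_X(1)$ imposes the binding constraint, and the sharpening $\alpha\leqslant 1+\beta$ for $-1<\beta<-\frac{1}{2}$ is precisely where the region of \cite[Thm.~3.3]{PY} improves on the cruder one of \cite[Thm.~6.8]{BLMS}. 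This step is elementary but the bookkeeping must be done carefully.

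Step (ii) is the substantive point and the one I expect to be the main obstacle, since it is the only place where the structure of $\mathsf{Ku}(X)$ enters rather than just that of the ambient threefold. Suppose $0\neq F\in\mathcal{A}(\alpha,\beta)$ with $Z(\alpha,\beta)(F)=0$; because $Z^0_{\alpha,\beta}$ is a fixed rotation of $Z_{\alpha,\beta}$, this forces $H^2\ch_1^{\beta}(F)=0$ and $\frac{1}{2}\alpha^2 H^3\ch_0(F)=H\ch_2^{\beta}(F)$. Using $F\in\Coh^0_{\alpha,\beta}(X)$ and the description of this heart in terms of the two tiltings, together with the Bogomolov inequality $\Delta_H\geqslant 0$ for its tilt-semistable constituents, these equations force $F$ to be, up to shift, a sheaf supported in dimension $\leqslant 1$. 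At this point I would invoke $F\in\mathsf{Ku}(X)$, i.e.\ $\H^{\bullet}(F)=\H^{\bullet}(F(-1))=0$: for an object supported in dimension $\leqslant 1$ one has $\chi(F)-\chi(F(-1))=H\cdot\ch_2(F)$, so the two vanishings give $\ch_2(F)=0$, reducing $F$ to a shift of a zero-dimensional sheaf, whose global sections must then also vanish, forcing $F=0$. This weak Mukai-lemma-type argument is the mechanism by which two exceptional objects suffice to rigidify the central charge.

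Finally, for (iii) I would take $Q=\Delta_H$; every tilt-semistable object of $\mathrm{D^b}(X)$, and each line bundle $\cO_X(i)$, satisfies the Bogomolov--Gieseker inequality~(\ref{BG}), and via the general argument of \cite{BLMS} this propagates to $\sigma(\alpha,\beta)$-semistable objects of $\mathsf{Ku}(X)$ through their Harder--Narasimhan decomposition in $\Coh^0_{\alpha,\beta}(X)$ together with the placement of $\cO_X,\cO_X(1)$ from step (i); moreover, since $\Lambda^2_{\mathsf{Ku}(X)}$ has rank $2$, the central charge $Z(\alpha,\beta)$ is injective for the relevant $(\alpha,\beta)$, so the requirement that $Q$ be negative definite on $\ker Z(\alpha,\beta)$ is vacuous. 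Assembling (i)--(iii) with the general part of the BLMS criterion yields the theorem on the stated range of $(\alpha,\beta)$.
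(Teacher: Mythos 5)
The paper does not prove this statement: it is imported verbatim from \cite[Theorem 6.8]{BLMS} and \cite[Theorem 3.3]{PY}, so there is no internal proof to compare against. Your outline correctly reconstructs the argument of those references — inducing from the weak stability condition $\sigma^0_{\alpha,\beta}$ via the BLMS criterion for semiorthogonal components, with the triangular region cut out by requiring $\cO_X$, $\cO_X(1)$ and their Serre twists (suitably shifted) to lie in $\Coh^0_{\alpha,\beta}(X)$, non-degeneracy of $Z$ on $\mathcal{A}(\alpha,\beta)$ via Bogomolov plus the vanishing of $\H^{\bullet}(F)$ and $\H^{\bullet}(F(-1))$, and the support property from $\Delta_H$ — the only quibble being that in step (ii) the offending object $F$ is a priori a complex whose tilt-semistable factors have $\ch_{\leqslant 2}=0$, not literally a shifted sheaf, though the conclusion $F=0$ is unaffected.
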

We mention the existence of such a Bridgeland stability condition on $\Ku(X)$ was first proved in \cite[Theorem 6.8]{BLMS} with $\beta=-\half$. The specific triangular region was computed in \cite[Theorem 3.3]{PY}. From now on, we use 
\begin{align*}
    V:=\{(\alpha,\beta)\in\R_{>0}\times\R:-\half\leqslant\beta<0, \alpha<-\beta,\text{ or }-1<\beta<-\half, \alpha\leqslant 1+\beta\}
\end{align*}
to denote this triangular region.

In fact, the stability conditions constructed above are the same up to the action of $\tilde{\mathrm{GL}}_2^+(\R)$.
\begin{prop}\cite[Proposition 3.6]{PY}
Fix $0<\alpha_0<\half$. For any $(\alpha,\beta)\in V$, there is $\tilde{g}\in\tilde{\mathrm{GL}}_2^+(\R)$ such that $\sigma(\alpha,\beta)=\sigma(\alpha_0,-\half)\cdot\tilde{g}$. 
\end{prop}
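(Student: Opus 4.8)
The plan is to deduce the statement from two facts: the right $\tilde{\mathrm{GL}}_2^+(\R)$-action on $\mathrm{Stab}(\Ku(X))$ has \emph{open} orbits, and $(\alpha,\beta)\mapsto\sigma(\alpha,\beta)$ is a continuous map out of the connected region $V$. Together these force the family to lie in a single orbit, which must be the one through $\sigma(\alpha_0,-\half)$.

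I would first establish openness of the orbits. Recall from \cite{Br07} that $\mathrm{Stab}(\Ku(X))$ is a complex manifold of dimension $\mathrm{rk}\,\Lambda^2_{\Ku(X)}=2$ and that $\sigma\mapsto Z_\sigma$ is a local homeomorphism onto an open subset of $\Hom(\Lambda^2_{\Ku(X)},\C)$. Fixing a $\Z$-basis of $\Lambda^2_{\Ku(X)}$ and writing $\C=\R^2$ identifies $\Hom(\Lambda^2_{\Ku(X)},\C)$ with $\mathrm{Mat}_2(\R)$, and under the action $Z_{\sigma\cdot\tilde g}=g^{-1}Z_\sigma$, where $g\in\mathrm{GL}_2^+(\R)$ is the image of $\tilde g$. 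Since the central charge of a Bridgeland stability condition on a rank-two lattice is of full rank over $\R$ (standard; for $Z(\alpha,\beta)$ it is a direct computation from the formulas above), the map $g\mapsto g^{-1}Z_\sigma$ is a diffeomorphism of $\mathrm{GL}_2^+(\R)$ onto one of the two connected components of $\mathrm{GL}_2(\R)$, an open subset of $\mathrm{Mat}_2(\R)$. Hence the orbit map $\tilde g\mapsto\sigma\cdot\tilde g$ is open — locally it factors as the local homeomorphism $\tilde g\mapsto Z_{\sigma\cdot\tilde g}$ followed by a local inverse of $Z$ — so every $\tilde{\mathrm{GL}}_2^+(\R)$-orbit is open. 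The orbits then partition $\mathrm{Stab}(\Ku(X))$ into open-and-closed subsets, so any connected subset lies in a single orbit.

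Next I would check continuity. The region $V$ is convex, hence connected. The weak stability conditions $\sigma_{\alpha,\beta}$ depend continuously on $(\alpha,\beta)$ (as recorded above); the operations producing $\sigma(\alpha,\beta)$ from $\sigma_{\alpha,\beta}$ — tilting, rescaling by $1/u$ as in Proposition \ref{2nd tilt}, and intersecting the heart with $\Ku(X)$ while restricting the central charge — send continuous families of (weak) stability conditions to continuous families; so $(\alpha,\beta)\mapsto\sigma(\alpha,\beta)$ is continuous, and its image is connected. By the previous paragraph this image lies in one $\tilde{\mathrm{GL}}_2^+(\R)$-orbit, and since $0<\alpha_0<\half$ forces $(\alpha_0,-\half)\in V$, that orbit is the orbit of $\sigma(\alpha_0,-\half)$, as claimed.

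The step I expect to require genuine care is the continuity of $(\alpha,\beta)\mapsto\sigma(\alpha,\beta)$ in Bridgeland's topology — the only place where the explicit construction, rather than soft arguments, is needed. Concretely I would argue that near any fixed $(\alpha_0,\beta_0)\in V$ one heart works for an entire neighborhood (using that tilt-semistable objects of bounded mass form a bounded family, so that the HN-tilting defining the heart does not jump) and then verify directly that the central charges, explicit polynomials in $(\alpha,\beta)$, vary continuously; alternatively one invokes the general continuity of this inducing procedure established in \cite{BLMS} and \cite{PY}.
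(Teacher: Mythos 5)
The paper gives no argument for this statement: it is imported wholesale from \cite{PY}, so the only thing to compare against is Pertusi--Yang's own proof. Your strategy --- non-degenerate central charges on a rank-two lattice make the relevant $\tilde{\mathrm{GL}}_2^+(\R)$-orbits open, and a continuous family over the connected (indeed convex) region $V$ therefore lands in a single orbit, necessarily that of $\sigma(\alpha_0,-\half)$ since $(\alpha_0,-\half)\in V$ --- is the standard one and is essentially the mechanism in \cite{PY}, which combines the explicit $\mathrm{GL}_2^+(\R)$-comparison of the central charges $Z(\alpha,\beta)$ with a deformation argument matching the slicings. One small imprecision: your argument shows openness only of orbits through points whose central charge is non-degenerate, not of ``every orbit''; this suffices here because each $\sigma(\alpha,\beta)$ with $(\alpha,\beta)\in V$ is such a point and these orbits cover the image of $V$, but the blanket claim should be restricted accordingly.

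The substantive issue is the continuity of $(\alpha,\beta)\mapsto\sigma(\alpha,\beta)$, which you correctly flag as the real content but then address with a heuristic that does not work as stated: it is not true that ``one heart works for an entire neighborhood.'' The hearts $\Coh^\beta(X)$, $\Coh^0_{\alpha,\beta}(X)$, and hence $\mathcal{A}(\alpha,\beta)$ genuinely change as $(\alpha,\beta)$ varies (objects enter and leave under the tilts), so local constancy of the heart cannot be the basis for continuity. The correct route is Bridgeland's deformation theorem from \cite{Br07}: the central charges $Z(\alpha,\beta)$ vary continuously, the support property gives a unique stability condition near $\sigma(\alpha_0,\beta_0)$ with the deformed central charge, and one identifies it with $\sigma(\alpha,\beta)$ by bounding the distance between the slicings by $1$; this is what \cite{PY} carry out. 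Your alternative of simply invoking the continuity ``established in \cite{PY}'' is essentially circular, since that continuity is the proposition being proved. With the deformation-theorem argument substituted for the locally-constant-heart heuristic, your proof closes up.
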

In light of this result, we set
\begin{align*}
    \mathcal{K}:=\sigma(\alpha_0,-\half)\cdot\tilde{\mathrm{GL}}_2^+(\R)\subset\mathrm{Stab}(\Ku(X))
\end{align*}
Then all stability conditions constructed in Theorem \ref{Kustab} are in $\mathcal{K}$.

\section{Bridgeland stability of minimal instanton bundles}
For the rest of this paper, let $X$ be a Fano threefold of Picard rank one, index two and degree $d$. Let $E$ be a minimal instanton bundle on $X$.
The Chern character of $E$ is 
\begin{align*}
    \mathrm{ch}(E)=(2,0,-2,0),
\end{align*}
and the twisted Chern character with respect to $-\half$ till degree $2$ is 
\begin{align*}
    \mathrm{ch}^{-\half}_{\leqslant2}(E)=(2,H,\frac{d-8}{4}).
\end{align*}

\begin{prop}\label{stab}
Let $E$ be a minimal instanton bundle on $X$. Then $E$ is $\sigma(\alpha_0,-\half)$-semistable for some $0<\alpha_0<\half$. As a result, $E$ is semistable for any stability condition in $\mathcal{K}$.
\end{prop}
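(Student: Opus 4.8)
The plan is to establish $\sigma(\alpha_0,-\half)$-semistability directly in the heart $\mathcal{A}(\alpha_0,-\half) = \Coh^0_{\alpha_0,-\half}(X) \cap \mathsf{Ku}(X)$, and then invoke Theorem \ref{Kustab} together with Proposition \ref{2nd tilt}'s two-step tilting to transfer the question to tilt-stability, where the Bogomolov–Gieseker-type bounds of Proposition \ref{Libound} do the work. First I would check that $E \in \Coh^{-\half}(X)$: since $E$ is slope-stable with $\mu_H(E)=0 > -\half = \beta$, it lies in the torsion part $\mathcal{T}^\beta$ of the first tilt. Next I would verify $E$ is $\sigma_{\alpha,-\half}$-(semi)stable for $\alpha \gg 0$ using Proposition \ref{2G}: slope-stability of $E$ (a stable rank-$2$ bundle) implies $2$-Gieseker-stability, hence tilt-stability for large $\alpha$; then I would try to propagate this down to small $\alpha$ by a wall-crossing analysis — walls for $E$ in the $(\alpha,\beta)$-plane are nested semicircles, and on each wall a destabilizing subobject $F \hookrightarrow E$ in $\Coh^\beta(X)$ would have $0 < \ch_0(F) < 2$, i.e. $\ch_0(F) = 1$, and $\ch_1^\beta(F)$, $\ch_1^\beta(E/F)$ both of nonnegative imaginary part; using $\ch^{-\half}_{\leqslant 2}(E) = (2,H,\tfrac{d-8}{4})$ one shows the numerical possibilities for such an $F$ are very limited and that no actual wall exists near $\beta = -\half$ for small $\alpha$, so $E$ stays tilt-semistable throughout the relevant region.

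With tilt-semistability of $E$ in hand, the second step is to pass to the heart $\Coh^0_{\alpha_0,-\half}(X)$ (the second tilt, at slope $\mu = 0$, i.e. $u = i$, so $Z^0 = \tfrac{1}{i}Z_{\alpha,\beta}$) and check that $E$ lies in it and is $\sigma^0_{\alpha_0,-\half}$-semistable there. Because $\ch_1^{-\half}(E) \cdot H = H^3 = d > 0$, the phase of $E$ with respect to $Z_{\alpha,-\half}$ is strictly between $0$ and $1$, so $E$ sits in the $\mathcal{T}^0$ part and survives into $\Coh^0$. A destabilizing sequence in $\Coh^0$ would again produce, via the long exact sequence in $\Coh^\beta$, a sub/quotient pair with controlled Chern characters; here I would use Proposition \ref{Libound} (the relevant case depending on $d$, applied to any tilt-semistable factor with nonzero rank and small slope) to bound $\tfrac{H\ch_2}{H^3\ch_0}$ and thereby rule out a factor whose $\sigma^0$-slope exceeds that of $E$. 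Finally, restricting to $\mathsf{Ku}(X)$: since $E \in \mathsf{Ku}(X)$ by \cite[Lemma 3.1]{Ku2}, any $\sigma(\alpha_0,-\half)$-destabilizing subobject of $E$ in $\mathcal{A}(\alpha_0,-\half)$ is in particular a $\sigma^0_{\alpha_0,-\half}$-destabilizing subobject in $\Coh^0$, which the previous step has excluded; hence $E$ is $\sigma(\alpha_0,-\half)$-semistable. The last sentence — "semistable for any $\sigma \in \mathcal{K}$" — is then immediate, since semistability is preserved under the $\tilde{\mathrm{GL}}_2^+(\R)$-action and every $\sigma \in \mathcal{K}$ is of the form $\sigma(\alpha_0,-\half)\cdot\tilde g$ by \cite[Proposition 3.6]{PY}.

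The main obstacle I expect is the wall-crossing bookkeeping in the first step: ruling out all potential walls for $E$ between "$\alpha \gg 0$" and the sliver $0 < \alpha_0 < \half$ at $\beta = -\half$. Concretely, one must show that a hypothetical subobject $F \subset E$ in $\Coh^{-\half}(X)$ with $\ch_0(F) = 1$ and $0 \le \ch_1^{-\half}(F)\cdot H \le H^3$ cannot have $\mu_{\alpha_0,-\half}(F) \ge \mu_{\alpha_0,-\half}(E)$; this forces $\ch_1^{-\half}(F)\cdot H$ to be $0$ or $d$, and then the instantonic vanishing $H^1(X, E(-1)) = 0$ (equivalently $\Hom(\cO_X(-1)[1], E) = 0$ and the analogous vanishing dual to it), plus stability of $E$ as a sheaf, should contradict the existence of such $F$ — e.g. $F$ or $E/F$ would have to be (a twist of) an ideal sheaf or $\cO_X$-type object incompatible with $E$ being a stable bundle with $c_3 = 0$. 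Handling the degree-$1$ case, where Proposition \ref{Libound} gives no bound, may require a separate ad hoc argument (consistent with the theorem only claiming semistability, not stability, and only $d \ge 2$ for stability).
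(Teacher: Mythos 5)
Your overall architecture (large-volume/2-Gieseker input, wall analysis along $\beta=-\tfrac12$, second tilt, restriction to $\Ku(X)$, then the $\tilde{\mathrm{GL}}_2^+(\R)$-action) matches the paper's, but the central step contains a genuine error. You assert that a destabilizing subobject $F\hookrightarrow E$ in $\Coh^{-\half}(X)$ has $0<\ch_0(F)<2$, hence $\ch_0(F)=1$. This is false: while the cohomology long exact sequence does force $F$ to be a sheaf (so $\ch_0(F)\geqslant 0$), the quotient $E/F$ in the tilted heart can have nonzero $\mathcal{H}^{-1}$, so its rank can be negative and $\ch_0(F)$ is not bounded above by $2$. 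The quantity that is constrained to lie between $0$ and $H^2\ch_1^{-\half}(E)=d$ is $H^2\ch_1^{-\half}(F)$, not the rank. The paper's wall analysis therefore has to treat subobjects with twisted character $(a,\tfrac{b}{2}H,\tfrac{c}{8})$ for $b\in\{0,1,2\}$ but arbitrary $a$; after parity considerations the dangerous case is $b=1$, $a$ odd, $a\geqslant 3$, and the contradiction comes from combining the wall equation $c=d-8+4d\alpha^2(a-1)$ with the Bogomolov inequality $\Delta_H(E'')\geqslant 0$, which yields $4d\alpha^2(a-1)\leqslant 2d-8$ --- impossible for $d\leqslant 4$, and forcing $\alpha\leqslant\tfrac{1}{\sqrt{20}}$ for $d=5$. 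Your proposed endgame (instantonic vanishing ruling out rank-one destabilizers with $H^2\ch_1^{-\half}\in\{0,d\}$) never meets these actual potential walls, and your sketch would not produce the necessary lower bound $\alpha_0>\tfrac{1}{\sqrt{20}}$ in degree $5$.

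Two smaller points. First, $\mu_{\alpha,-\half}(E)=\tfrac{d-8-4d\alpha^2}{4d}<0$ for all $d\leqslant 5$, so $E$ lies in $\mathcal{F}^0_{\sigma_{\alpha,-\half}}$, not $\mathcal{T}^0$ as you claim; it is $E[1]$ that lands in $\Coh^0_{\alpha_0,-\half}(X)$ (having $\Im Z>0$ does not place an object in $\mathcal{T}^0$). Second, your appeal to Proposition \ref{Libound} in the second step is unnecessary: once tilt-semistability is known, the passage to $\sigma^0_{\alpha_0,-\half}$-semistability is essentially formal here because $E$ is a torsion-free sheaf of strictly negative tilt slope; the paper reserves Proposition \ref{Libound} for the converse direction in Section 5.
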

\begin{proof}
$E$ is stable by definition. By \cite[Lemma 1.23]{Sa}, $E$ is $\mu$-stable.  As $\mu_H(E)=0>-\half$, we have $E\in \Coh^{-\half}(X)$. Since $H^2\mathrm{ch}_1^{-\half}(E)>0$, $E$ is $\sigma_{\alpha,-\half}$-stable for $\alpha\gg 0$ by \cite[Lemma 2.7(c)]{BMS}\

Next we show that $E$ is $\sigma_{\alpha,-\half}$-semistable for some $\alpha<\half$. A wall would be given by a short exact sequence in the heart $\Coh^{-\half}(X)$ of the form
\begin{align*}
    0\to E'\to E\to E''\to 0,
\end{align*}
such that the following conditions hold:
\begin{enumerate}
    \item $\mu_{\alpha,-\half}(E')=\mu_{\alpha,-\half}(E)=\mu_{\alpha,-\half}(E'')$;
    \item $\Delta_H(E')\geqslant 0$, $\Delta_H(E'')\geqslant 0$;
    \item $\Delta_H(E')\leqslant \Delta_H(E)$, $\Delta_H(E'')\leqslant \Delta_H(E)$;
    \item $
    \mathrm{ch}^{-\half}_{\leqslant2}(E')\neq(1,\half H,\frac{d-8}{8}), \mathrm{ch}^{-\half}_{\leqslant2}(E')\neq (2,H,\frac{d-8}{4}).$
\end{enumerate}
Note we require the last condition because $\ch(E)$ is not primitive and we are considering walls that might break the semistability of $E$.\ 

The truncated twisted characters of $E'$ and $E''$ have to satisfy
\begin{align*}
    (2,H,\frac{d-8}{4})=(a,\frac{b}{2}H,\frac{c}{8})+(2-a,\frac{2-b}{2}H,\frac{2d-16-c}{8})
\end{align*}
for some $a,b,c\in \Z$. Since $E'$ and $E''$ are in $\Coh^{-\half}(X)$, we have $b\geqslant 0$ and $2-b\geqslant 0$.
Thus $b=0,1$ or $2$.\ 

As $\mu_{\alpha,-\half}(E)=\frac{d-8-4d\alpha^2}{4d}$ and $\Delta_H(E)=8d$, the first three conditions become:
\begin{enumerate}
    \item $\frac{1}{b}(\frac{c}{4d}-\alpha^2a)=\frac{d-8-4d\alpha^2}{4d}=\frac{1}{2-b}(\frac{2d-16-c}{4d}-\alpha^2(2-a))$;
    \item $(\frac{b}{2})^2-\frac{ac}{4d}\geqslant 0$, $(\frac{2-b}{2})^2-\frac{(2-a)(2d-16-c)}{4d}\geqslant 0$;
    \item $(\frac{b}{2})^2-\frac{ac}{4d}\leqslant \frac{8}{d}$, $(\frac{2-b}{2})^2-\frac{(2-a)(2d-16-c)}{4d}\leqslant \frac{8}{d}$. 
\end{enumerate}
Suppose $b=0$. If $a\neq0$, then 
\begin{align*}
    4d\alpha^2=\frac{c}{a}>0 \text{ and } ac\leqslant 0
\end{align*}
which is impossible. If $a=0$, then $c=0$ by the first equation. This means either $E'$ or $E''$ has twisted Chern character $(2,H,\frac{d-8}{4})$. The case that $E'$ has twisted Chern character $(2,H,\frac{d-8}{4})$ is excluded by condition (4). If $E''$ has twisted Chern character $(2,H,\frac{d-8}{4})$, then $E$ would have a subobject with infinite slope, again contradicting the stability of $E$ for $\alpha\gg 0$. The case $b=2$ can be excluded by symmetry and the above argument.\ 

Suppose $b=1$, then $2-b=1$. By condition (4), we can assume $a\neq 1$, then either $a$ or $2-a$ will be greater than or equal to $2$. We assume $a\geqslant 2$ without loss of generality. We have then either $\ch(E')$ or $\ch(E'')$ being equal to
\begin{align*}
    (a,\half (1-a)H,.....).
\end{align*}
Since $\half (1-a)\in\Z$, $a$ must be odd and thus $a\geqslant 3$. Using (1) and the assumption $b=1$, we obtain
\begin{align*}
    c=d-8+4d\alpha^2(a-1).
\end{align*}
Combine this with the second equation in (2), we obtain
\begin{align*}
    \frac{(a-2)(4d\alpha^2(a-1)-d+8)}{4d}&\leqslant \frac{1}{4}
\end{align*}
which simplifies to 
\begin{align*}
        4d\alpha^2(a-1)&\leqslant \frac{d}{a-2}+d-8\leqslant 2d-8.
\end{align*}
This leads to contradictions when $1\leqslant d\leqslant 4$, since the left hand side is positive. In these cases there are no walls for the tilt-semistability of $E$.\

If $d=5$, the above equation becomes
\begin{align*}
    \alpha^2\leqslant \frac{1}{10(a-1)}\leqslant 1/20.
\end{align*}
In this case there are no walls for $\alpha>\frac{1}{\sqrt{20}}.$\ 

As a result, for all $1\leqslant d\leqslant 5$, we can choose $\frac{1}{\sqrt{20}}<\alpha_0<\half$ so that $E$ is $\sigma_{\alpha_0,-\half}$-semistable. Since $E$ is torsion free, this implies $E$ is $\sigma^0_{\alpha_0,-\half}$-semistable and thus $\sigma(\alpha_0,-\half)$-semistable for the same $0<\alpha_0<\half$.
By \cite[Section 3.3]{PY}, $E$ is semistable for all stability conditions in $\mathcal{K}\subset \mathrm{Stab}(\mathsf{Ku}(X))$.
\end{proof}


\begin{cor}\label{stabcor}
Let $E$ be a minimal instanton bundle on $X$. If the degree of $X$ satisfy $d\geqslant2$, then $E$ is stable for any stability condition in $\mathcal{K}$.
\end{cor}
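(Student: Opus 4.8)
The plan is to upgrade the semistability of Proposition \ref{stab} to stability by ruling out strictly semistable objects, i.e.\ by showing that $E$ admits no Jordan--H\"older filtration with a proper stable factor of the same phase. Since $E\in\mathcal{A}(\alpha_0,-\half)$ and the numerical class $2[\cI_l]$ is twice the primitive class $[\cI_l]$, a Jordan--H\"older factor $F$ of $E$ in $\mathcal{A}(\alpha_0,-\half)$ must have numerical class $[\cI_l]$ (the only admissible splitting of $2[\cI_l]$ into two semistable pieces of equal phase). So the crux is: there is a short exact sequence in $\mathcal{A}(\alpha_0,-\half)$
\begin{align*}
    0\to F_1\to E\to F_2\to 0
\end{align*}
with $[F_1]=[F_2]=[\cI_l]$, and we must derive a contradiction.

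First I would transport this back to the weak stability $\sigma^0_{\alpha_0,-\half}$ on $\mathrm{D^b}(X)$ and then to tilt stability $\sigma_{\alpha_0,-\half}$: because $E$ is $\sigma_{\alpha_0,-\half}$-semistable and torsion free with $\ch^{-\half}_{\le 2}(E)=(2,H,\tfrac{d-8}{4})$, any such destabilizing sequence of $E$ in the tilted heart $\Coh^0_{\alpha_0,-\half}(X)$ is built from a short exact sequence in $\Coh^{-\half}(X)$, and the numerical data of the factors is forced to be $\ch^{-\half}_{\le2}(F_i)=(1,\half H,\tfrac{d-8}{8})$ — exactly condition (4) that was allowed to occur in the wall analysis. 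The next step is to pin down what such an $F_i$ can be: a $\sigma_{\alpha_0,-\half}$-semistable object of class $(1,\half H,\tfrac{d-8}{8})$, i.e.\ $\ch(F_i)=(1,0,-1,\ast)$. I would argue (using that $\cI_l$ realizes this class, that the class is primitive, and boundedness/Bogomolov-type constraints as in Proposition \ref{Libound} combined with the fact that $F_i$ lies on no wall for $\alpha$ near $\alpha_0$) that $F_i$ is a twist of an ideal sheaf of a subscheme of dimension $\le 1$ with the right Hilbert polynomial, hence an ideal sheaf $\cI_C$ of a conic or a line-plus-point, or a rank-one reflexive sheaf; in the Kuznetsov component the relevant objects are $\cI_l$ (and its JH-relatives).

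Then I would run the Serre-duality / $\Hom$-vanishing argument: from $0\to F_1\to E\to F_2\to 0$ we get $\Hom(F_1,E)\ne 0$ and $\Hom(E,F_2)\ne 0$ with $F_1,F_2$ objects of class $[\cI_l]$. Using that in $\Ku(X)$ the ideal sheaves $\cI_l$ are $\sigma$-stable (Pertusi--Yang) of the same phase as $E$, $\Hom(F_1,E)\ne0$ forces $F_1\hookrightarrow E$ as a subsheaf in $\Coh(X)$ (after unwinding the heart), so $E$ would contain an ideal sheaf of a line as a subsheaf with torsion-free quotient of class $[\cI_l]$; but $E$ is $\mu$-stable of slope $0$, and a saturated rank-one subsheaf has slope $\le 0$ with equality only if it is $\cO_X$-twist-trivial, contradicting $\ch_1=0$ being primitive — more precisely $\Hom(\cI_l,E)=0$ because $\cI_l$ and $E$ are both $\mu$-stable of slope $0$ but non-isomorphic with the same $\ch_0\cdot$slope, while a nonzero map of slope-stable sheaves of equal slope is injective with torsion quotient, and rank considerations ($1<2$) then force the quotient to have rank $1$, whose reflexive hull is a line bundle of degree $0$, i.e.\ $\cO_X$, contradicting $c_2(E)=2\ne 0$. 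Assembling these vanishings kills the JH filtration and gives stability.

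The main obstacle I expect is the classification step: showing that a $\sigma_{\alpha_0,-\half}$-semistable (equivalently, for $\alpha$ large, $2$-Gieseker-semistable by Proposition \ref{2G}, but here $\alpha_0$ is not large, so one needs the no-wall statement of Proposition \ref{stab} extended to the primitive class) object of class $(1,0,-1,\ast)$ is genuinely an ideal sheaf of a line up to the ambiguities allowed in the Kuznetsov component — and then controlling the third Chern character so that the $\Hom$-computations above are valid. In degree $d=1$ this classification is genuinely missing, which is exactly why the corollary assumes $d\ge 2$; for $d\ge 2$ one can lean on the wall-and-chamber analysis already carried out in the proof of Proposition \ref{stab} (the same inequalities, with the primitive class, show the relevant region is wall-free) together with Proposition \ref{Libound} to bound $\ch_2$ and pin down the sheaf.
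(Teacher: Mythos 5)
Your proposal follows essentially the same route as the paper: a strictly semistable $E$ would have two Jordan--H\"older factors of numerical class $[\cI_l]$ (this is [PY, Lemma 3.9]), those factors are identified as ideal sheaves of lines, and an injection $\cI_l\hookrightarrow E$ then contradicts the $\mu$-stability of $E$. The classification step you flag as the main obstacle is exactly [PY, Proposition 4.6] --- for $d\geqslant 2$ the $\sigma$-stable objects of class $[\cI_l]$ in $\Ku(X)$ are precisely the ideal sheaves of lines (this fails for $d=1$, which is why the corollary excludes that case) --- so the paper simply cites it instead of re-running the wall-and-chamber analysis for the primitive class as you propose.
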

\begin{proof}
It suffices to show the stability of $E$ for $\sigma(\alpha_0,-\half)$. Suppose $E$ is strictly $\sigma(\alpha_0,-\half)$-semistable. By \cite[Lemma 3.9]{PY}, it has two Jordan-H\"older factors, each having numerical class as that of the ideal sheaf of a line. By \cite[Proposition 4.6]{PY}, the Jordan-H\"older factors are ideal sheaves of lines. This contradicts the $\mu$-stability of $E$. Thus $E$ is $\sigma(\alpha_0,-\half)$-stable.
\end{proof}

For the rest of this section, assume $d\geqslant3$. We look at non-locally free generalizations of minimal instanton bundles. Let $C$ be a smooth conic on $X$ with $d\geqslant 3$. Note $C\simeq \pP^1$ and we denote by $\theta$ its theta characteristic (dual of the ample generator in $\mathrm{Pic}(C)$). Let $E$ be the torsion free sheaf defined by the short exact sequence
\begin{align}\label{conic}
    0\to E\to H^0(\theta(1))\otimes \cO_X\to \theta(1)\to 0.
\end{align}
Then by \cite{D}\cite{Q1}]\cite{Q2}, $E$ is Gieseker-stable and $E\in \mathsf{Ku}(X)$.
\begin{prop}\label{conicstab}
Let $E$ be a sheaf on $X$ as defined in (\ref{conic}). Then $E$ is stable for any stability condition in $\mathcal{K}$.
\end{prop}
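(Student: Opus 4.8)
I would mirror the structure of the proof of Proposition \ref{stab} and Corollary \ref{stabcor}: first show that the sheaf $E$ from (\ref{conic}) is $\sigma_{\alpha,-\half}$-semistable for a suitable range of $\alpha$, deduce it is $\sigma(\alpha_0,-\half)$-semistable (hence $\sigma$-semistable for all $\sigma\in\mathcal K$ by the $\tilde{\mathrm{GL}}_2^+(\R)$-orbit argument of \cite{PY}), and then upgrade semistability to stability by ruling out a Jordan--Hölder filtration into two copies of the class $[\cI_l]$.

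\begin{proof}
First note that $E$ has the same Chern character $(2,0,-2,0)$ and same twisted character $\ch^{-\half}_{\leqslant 2}(E)=(2,H,\frac{d-8}{4})$ as a minimal instanton bundle, since these depend only on $\ch(E)$. The sheaf $E$ is torsion free with $\mu_H(E)=0$, and it is slope-semistable: indeed a destabilizing subsheaf would have to have $\ch_0=1$, $\ch_1\geqslant 0$, but from (\ref{conic}) one sees $E$ is a subsheaf of $H^0(\theta(1))\otimes\cO_X$, so any rank-one subsheaf of $E$ has $\ch_1\leqslant 0$, forcing $\mu_H\leqslant 0$. Hence $E\in\Coh^{-\half}(X)$, and since $H^2\ch_1^{-\half}(E)>0$, $E$ is $\sigma_{\alpha,-\half}$-stable for $\alpha\gg 0$ by \cite[Lemma 2.7(c)]{BMS}.

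Next I would run the wall-and-chamber analysis exactly as in Proposition \ref{stab}: a wall breaking the $\sigma_{\alpha,-\half}$-semistability of $E$ gives a short exact sequence $0\to E'\to E\to E''\to 0$ in $\Coh^{-\half}(X)$ with truncated twisted characters $(a,\frac b2 H,\frac c8)$ and $(2-a,\frac{2-b}{2}H,\frac{2d-16-c}{8})$, satisfying conditions (1)--(4) there, with $b\in\{0,1,2\}$. The arguments for $b=0$ and $b=2$ are identical to those in Proposition \ref{stab} and use only $\ch^{-\half}_{\leqslant 2}(E)$, so they carry over verbatim; the $b=1$ case likewise forces, for $d\leqslant 4$, the inequality $4d\alpha^2(a-1)\leqslant 2d-8$ with $a\geqslant 3$, a contradiction, so there are no walls at all when $3\leqslant d\leqslant 4$, and for $d=5$ no walls for $\alpha>\tfrac{1}{\sqrt{20}}$. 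Thus one can again choose $\tfrac{1}{\sqrt{20}}<\alpha_0<\half$ with $E$ being $\sigma_{\alpha_0,-\half}$-semistable; since $E$ is torsion free this gives $\sigma^0_{\alpha_0,-\half}$-semistability, hence $\sigma(\alpha_0,-\half)$-semistability, hence (by \cite[Section 3.3]{PY}) $\sigma$-semistability for all $\sigma\in\mathcal K$.

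Finally, to get stability it suffices to handle $\sigma(\alpha_0,-\half)$. If $E$ were strictly semistable, then by \cite[Lemma 3.9]{PY} it has exactly two Jordan--Hölder factors, each of numerical class $[\cI_l]$, and by \cite[Proposition 4.6]{PY} each factor is the ideal sheaf of a line; in particular $E$ would admit $\cI_{l}$ as a subobject in $\mathcal A(\alpha_0,-\half)$. I would derive a contradiction from the geometry of (\ref{conic}): an ideal sheaf of a line, being a slope-semistable sheaf in $\Coh^{-\half}(X)\cap\mathcal T^0$, sits in $E$ as an actual subsheaf (using that both lie in $\Coh(X)$ with the right slopes and that the quotient in the heart is again a sheaf), which would violate the slope-stability — more precisely, the absence of rank-one subsheaves of positive $c_1$ — established above; alternatively one notes $E$ is Gieseker-\emph{stable} by \cite{D}\cite{Q1}\cite{Q2}, so it has no subsheaf of the same reduced Hilbert polynomial as $\cI_l$. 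Either way the strictly-semistable case is excluded and $E$ is $\sigma(\alpha_0,-\half)$-stable, hence $\sigma$-stable for every $\sigma\in\mathcal K$.
\end{proof}

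\medskip

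\noindent\textbf{Where the difficulty lies.} The wall-crossing bookkeeping is essentially a copy of Proposition \ref{stab} because it only sees the (twisted) Chern character, which $E$ shares with a minimal instanton; the genuinely new point is the last paragraph, where one must use the actual sheaf-theoretic structure of (\ref{conic}) — not just its numerical class — to exclude the Jordan--Hölder decomposition into two ideal sheaves of lines. The cleanest route is probably to invoke the \emph{strict} Gieseker-stability of $E$ (which is stronger than what holds for a strictly semistable instanton like $\cI_{l_1}\oplus\cI_{l_2}$), since a destabilizing subobject in the heart of numerical class $[\cI_l]$ would have to be an honest subsheaf with the same reduced Hilbert polynomial, contradicting stability. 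I expect verifying that ``subobject in $\mathcal A(\alpha_0,-\half)$ of class $[\cI_l]$ implies subsheaf of $E$'' cleanly — i.e. controlling the long exact sequence of cohomology sheaves — to be the one step requiring genuine care rather than routine computation.
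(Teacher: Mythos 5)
Your overall strategy coincides with the paper's: the wall computation of Proposition \ref{stab} only sees $\ch^{-\half}_{\leqslant 2}(E)$ and so transfers verbatim, and stability is then deduced by identifying the Jordan--H\"older factors with ideal sheaves of lines via \cite[Lemma 3.9]{PY} and \cite[Proposition 4.6]{PY}. However, two steps need repair. First, your entry into the large-volume regime is misjustified: \cite[Lemma 2.7(c)]{BMS} requires $E$ to be slope-\emph{stable}, and the conic sheaf is not --- the kernel of $\cO_X\xrightarrow{\,s\,}\theta(1)$ for $0\neq s\in H^0(\theta(1))$ is the ideal sheaf $\cI_C$ of the conic, a rank-one subsheaf of $E$ of slope $0$. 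Slope-semistability alone does not yield tilt-semistability for $\alpha\gg 0$ (for instance $\cO_X\oplus\cI_p$ is slope-semistable but tilt-unstable for every $\alpha$, since the summand $\cO_X$ has strictly larger $\ch_2$). The correct input, and the one the paper uses, is that $E$ is Gieseker-stable, hence 2-Gieseker-semistable, hence $\sigma_{\alpha,-\half}$-semistable for $\alpha\gg0$ by Proposition \ref{2G}; everything downstream of this is unaffected. Second, of the two contradictions you propose in the last paragraph, the first one fails: $\cI_l$ has $c_1=0$, so its presence as a subsheaf is perfectly compatible with the ``no rank-one subsheaf of positive $c_1$'' property you established (and, as just noted, $E$ really does contain rank-one subsheaves of slope $0$). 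Only your second alternative works: $\cI_l$ and $E$ have the same reduced Hilbert polynomial $P(\cO_X)(t)-(t+1)$, so a subsheaf isomorphic to $\cI_l$ contradicts the Gieseker-\emph{stability} of $E$ from Proposition \ref{classfication}. With these two corrections your argument is exactly the paper's proof.
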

\begin{proof}
The proof is essentially the same as above. Although $E$ is not slope-stable, we note since $E$ is Gieseker-stable, it is 2-Gieseker-semistable. By Proposition \ref{2G}, $E$ is $\sigma_{\alpha,-\half}$-semistable for $\alpha\gg 0$. We can then apply the arguments for walls in Proposition \ref{stab} to show $E$ is semistable for stability conditions in $\mathcal{K}$. The fact that $E$ is stable follows from the arguments of Corollary \ref{stabcor}.
\end{proof}
\begin{rem}\label{semistab}
    By \cite[Proposition 4.4]{PY}, extensions of ideal sheaves of lines on $X$ are strictly $\sigma$-semistable for every $\sigma\in \mathcal{K}$. In fact, the proof of Corollary \ref{stabcor} shows that they are the only strictly $\sigma$-semistable objects with numerical class $2[\cI_l]$.
\end{rem}

\section{Description of the moduli space $M_\sigma(\mathsf{Ku}(X),2[\cI_l])$}
In this section we will show that for $\sigma\in\mathcal{K}$, $\sigma$-semistable objects in $\Ku(X)$ with numerical class $2[\cI_l]\in\mathcal{N}(\mathsf{Ku}(X))$ are Gieseker-semistable sheaves with Chern character $(2,0,-2,0)$ up to shifts when $d\geqslant 3$. We will then use this result to describe $M_\sigma(\mathsf{Ku}(X),2[\cI_l])$. We need the following two lemmas.

\begin{lem}\label{2 Gieseker to Gieseker}
Let $E$ be a 2-Gieseker-semistable sheaf with $rk(E)=2, c_1(E)=0, c_2(E)=2$ and $c_3(E)=0$ on $X$. Then $E$ is Gieseker-semistable.
\end{lem}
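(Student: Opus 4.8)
The plan is to show that any potential destabilizer in the Gieseker sense is already a destabilizer in the $2$-Gieseker sense, hence cannot exist. Since slope-stability implies $2$-Gieseker-stability implies Gieseker-stability, the only way $E$ can fail to be Gieseker-semistable while being $2$-Gieseker-semistable is if there is a saturated subsheaf $F\subset E$ with $P_2(F)$ proportional to (i.e., tied with) $P_2(E/F)$ but with the degree-zero coefficients of the Hilbert polynomials strictly out of order, i.e. $\chi(F(mH)) - \chi((E/F)(mH))$ having the wrong sign for $m\gg 0$ after normalizing. So first I would reduce to analyzing such a borderline subsheaf: by $2$-Gieseker-semistability and $\mathrm{rk}(E)=2$, either $\mathrm{rk}(F)=1$ or $\mathrm{rk}(F)=2$ with $F$ a full-rank subsheaf; the latter contributes only torsion to $E/F$, and since $c_3$ bookkeeping and torsion-freeness of $E$ (it is a sheaf with the Chern character of a minimal instanton, hence reflexive or at least torsion free by its construction) constrain things, I would handle that case directly. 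The main case is $\mathrm{rk}(F)=1$, so $F$ is a rank-one torsion-free sheaf with $c_1(F)=-\ell H$ for some $\ell\geqslant 0$, and $E/F$ has rank $1$ with $c_1 = \ell H$.

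Next I would use the $2$-Gieseker tie condition, namely that $P_2(F)$ and $P_2(E/F)$ have equal normalized leading-and-subleading behavior, to pin down $\ell$ and $c_2(F)$. Writing out $P_2$ in terms of $\mathrm{ch}_1,\mathrm{ch}_2$ and using $H^3=d$, the degree-$3$ and degree-$2$ coefficients of $P_2(F)/\mathrm{rk}$ and $P_2(E/F)/\mathrm{rk}$ being equal (both ranks are $1$ here) forces $\mathrm{ch}_1(F) = \mathrm{ch}_1(E/F)$ and $H\mathrm{ch}_2(F) = H\mathrm{ch}_2(E/F)$; combined with $\mathrm{ch}_1(E)=0$ and $H\mathrm{ch}_2(E)=-2$ this gives $\mathrm{ch}_1(F)=0$, i.e. $\ell=0$, and $H\mathrm{ch}_2(F)=-1$. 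So $F$ is a rank-one torsion-free sheaf with trivial first Chern class and $c_2(F)=1$; such a sheaf is $\cI_Z$ (up to the trivial line bundle) for a curve $Z$ of degree $1$, i.e. essentially an ideal sheaf of a line, and likewise for $E/F$. Then I would examine the degree-$1$ (the $a_1 m$) and degree-$0$ coefficients: a direct Riemann–Roch computation with $\mathrm{td}(\mathcal T_X)=(1,1,1+d/3,1)$ shows that for two such subsheaves the remaining coefficients of $P_2$ actually agree as well (they depend only on $\mathrm{ch}_0,\mathrm{ch}_1,\mathrm{ch}_2$ through $P_2$, and we have matched all of these up to $H\mathrm{ch}_2$, while the $\mathrm{ch}_3$-dependence drops out of $P_2$ entirely since $a_3,a_2,a_1$ do not see $\mathrm{ch}_3$ — only $a_0$ does). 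Hence $P_2(F) \preceq P_2(E/F)$ and $P_2(E/F)\preceq P_2(F)$, so $F$ does not violate Gieseker-semistability after all: the full Hilbert polynomials $P(F)$ and $P(E/F)$ differ only in their constant terms, which are governed by $\mathrm{ch}_3$, and the constraint $c_3(E)=0$ together with additivity forces $\chi(F)=\chi(E/F)$, giving $P(F)=P(E/F)$ and thus $P(F)\preceq P(E)$.

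The step I expect to be the main obstacle is the bookkeeping in the rank-$2$ full-rank subsheaf case and, more subtly, making sure no rank-one subsheaf with $c_1(F) = -\ell H$, $\ell \geqslant 1$ can sneak in as a $2$-Gieseker destabilizer in a way I have not accounted for — this is where I would lean on the explicit inequalities forced by $2$-Gieseker-semistability ($P_2(F)\prec P_2(E/F)$ strictly, unless tied) to rule out $\ell\geqslant 1$, rather than on a slope computation, since $E$ need not be slope-semistable. Once $\ell=0$ is forced, the rest is a finite Riemann–Roch check. An alternative, possibly cleaner route I would keep in mind: observe that the numerical class $2[\cI_l]$ has the property that any sub-Hilbert-polynomial allowed by $2$-Gieseker-semistability automatically has a matching constant term because the only Gieseker-but-not-$2$-Gieseker phenomena for this Chern character would have to come from $c_3$, and $c_3(E)=0$ pins it down; I would phrase the final contradiction in exactly those terms.
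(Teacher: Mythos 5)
Your reduction to a rank-one destabilizing subsheaf $F$ with $\ch_{\leqslant 2}(F)=(1,0,-1)$ is sound and is essentially what the paper does (the paper takes the first step of the Harder--Narasimhan filtration, so the full-rank case you worry about never arises: a rank-$2$ subsheaf with torsion quotient has $P(F)=P(E)-P(T)\prec P(E)$ and cannot destabilize). The genuine gap is in your final step. You claim that ``the constraint $c_3(E)=0$ together with additivity forces $\chi(F)=\chi(E/F)$.'' Additivity only gives $\chi(F)+\chi(E/F)=\chi(E)=0$, i.e.\ $\ch_3(F)=m$ and $\ch_3(E/F)=-m$ for some integer $m$; it does not force $m=0$. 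Since all higher coefficients of $P(F)$ and $P(E)/2$ agree, $F$ is Gieseker-destabilizing precisely when $\chi(F)=m\geqslant 1$, and no purely numerical bookkeeping with $\ch_{\leqslant 2}$ or with $P_2$ (which, as you note yourself, never sees $\ch_3$) can exclude this. Your ``alternative cleaner route'' at the end repeats the same error.

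What is actually needed — and what the paper supplies — is a geometric input: $F$ is the ideal sheaf $\cI_{D'}$ of a one-dimensional closed subscheme $D'$ of degree $1$, so $\chi(\cO_{D'})=\chi(\cO_X)-\chi(\cI_{D'})=1-m$, and by \cite[Corollary 1.38(1)]{Sa} any such degree-$1$ subscheme (a line, possibly with embedded or isolated points) satisfies $\chi(\cO_{D'})\geqslant 1$, whence $m\leqslant 0$. This single inequality is the entire content of the lemma; without it, or some substitute bounding $\ch_3$ of rank-one torsion-free sheaves with $c_1=0$, $c_2=1$, the proof does not close.
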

\begin{proof}
Suppose $E$ is Gieseker-unstable. Then the Harder-Narasimhan filtration of $E$ in $\Coh(X)$ is of the form
\begin{align*}
    0\to E'\to E\to E''\to 0
\end{align*}
with $E', E''$ Gieseker-semistable of rank $1$ and $p(E')\succ p(E)$. Note $\ch(E)=(2,0,-2,0)$. Since $E$ is 2-Gieseker-semistable, it is easy to see that we must have $\ch_1(E')=\ch_1(E'')=0$ and $\ch_2(E')=\ch_2(E'')=-1$. Then $E'$ and $E''$ are ideal sheaves of closed subschemes of dimension $1$, which we denote by $D'$ and $D''$ respectively.  Since $E'$ is Gieseker-destabilizing, $\ch(E')=(1,0,-1,m)$ with $m\geqslant 1$. The Hilbert polynomial of $D'$ is thus $P_{D'}(t)=t+1-m$, this contradicts \cite[Corollary 1.38(1)]{Sa}.\ 
\end{proof}

\begin{lem}\label{nowall-1}
Assume $d\geqslant 3$. Let $E\in \cO_X^\perp$ be an object with Chern character $(2,0,-2,0)$. Then the vertical line $\beta=-1$ does not intersect any actual wall for the $\sigma^0_{\alpha,\beta}$-stability of $E$.
\end{lem}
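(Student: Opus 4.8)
The plan is to argue, via the standard wall-and-chamber analysis for tilt stability on the threefold $X$, that no numerical wall for $\sigma^0_{\alpha,\beta}$-stability of $E$ actually crosses the vertical line $\beta=-1$. Recall that a wall for an object of class $(2,0,-2,0)$ inside $\Coh^{\beta}(X)$ is governed by a short exact sequence $0\to A\to E\to B\to 0$ in the tilted heart with $\mu_{\alpha,\beta}(A)=\mu_{\alpha,\beta}(B)=\mu_{\alpha,\beta}(E)$, subject to the usual constraints $0\le\Delta_H(A),\Delta_H(B)\le\Delta_H(E)$ and $\ch_0^{\beta}(A),\ch_0^{\beta}(B)$ of the correct signs for membership in $\Coh^{-1}(X)$. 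First I would specialize $\beta=-1$ and write $\ch^{-1}_{\le 2}(E)=(2,2H,-1\cdot? )$ — more precisely compute $\ch_1^{-1}(E)=\ch_1(E)+H\ch_0(E)=2H$ and $\ch_2^{-1}(E)=\ch_2(E)+H\ch_1(E)+\tfrac12 H^2\ch_0(E)=-2+0+d=d-2$, so $\ch^{-1}_{\le2}(E)=(2,2H,d-2)$. Then I would enumerate the possible truncated twisted characters $(a,bH,c)$ of a destabilizing subobject $A$, using $0\le b\le 2$ (from $A,B\in\Coh^{-1}(X)$) exactly as in the proof of Proposition 4.2.

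The heart of the argument is to eliminate each case using the $\Delta_H$-bounds together with Proposition 3.10 (Li's bound). The case $b=0$ or $b=2$ is handled as in Proposition 4.2: either one obtains $4d\alpha^2 = c/a$ with $ac\le 0$ (impossible), or $a=0,c=0$ which forces a sub/quotient of class $(2,2H,d-2)$, i.e. no actual wall. For $b=1$, the subobject or quotient has $\ch_0$ odd (since $\ch_1^{-1}$ must be integral after untwisting: $\ch_1(A)=\ch_1^{-1}(A)-H\ch_0(A)=(1-a)H$ needs $\ldots$ — actually $\ch_1^{-1}(A)=bH=H$ so $\ch_1(A)=(1-a)H$, automatically integral, so I must instead use that $\ch_1(A)/\ch_0(A)$, i.e.\ the slope, together with the $\Delta$-bound), so I would track the slope $\mu_H(A)=\ch_1(A)/\ch_0(A)=(1-a)/a$ and apply Proposition 3.10 in the appropriate range to bound $\ch_2(A)/\ch_0(A)$ from above, then combine with $\Delta_H(A)\le\Delta_H(E)=8d$ and the wall equation to derive a contradiction or force $\ch(A)=\ch(E)$. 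The degree hypothesis $d\ge 3$ is used precisely here, since Li's bound (Proposition 3.10) is available for $d=3,4,5$ but the numerics are tightest for small $d$.

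The main obstacle I anticipate is the bookkeeping in the $b=1$ case: one must be careful that the relevant slope $\mu_H(A)$ lands in the interval where Proposition 3.10 applies (e.g. $-\tfrac12\le\mu_H\le\tfrac12$ or $\tfrac12<|\mu_H|\le 1$ for $d=3$), and must treat the subobject and the quotient separately since their slopes differ. A secondary subtlety is that Proposition 3.10 requires the destabilizing object itself to be $\sigma_{\alpha,\beta}$-\emph{stable}; the standard remedy is to pass to a stable factor on the wall, whose class still satisfies $0\le\Delta_H\le\Delta_H(E)$ and whose slope constraints are inherited, so the same bound applies. Once all cases collapse, the only sequences left are those with $\ch(A)=\ch(E)$ or $\ch(B)=\ch(E)$, which do not define actual walls, giving the claim. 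Finally, I would remark that since $E\in\cO_X^\perp$ one additionally knows $\Hom(\cO_X,E)=\Hom(\cO_X,E[1])=0$, which rules out the would-be wall coming from an $\cO_X$-subobject or $\cO_X[1]$-quotient at $\beta=-1$; this is the reason the hypothesis $E\in\cO_X^\perp$ (rather than merely $E\in\Ku(X)$) is imposed.
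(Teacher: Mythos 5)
Your setup is correct (the twisted character $\ch^{-1}_{\leqslant 2}(E)=(2,2H,d-2)$, the case split $b=0,1,2$, and the numerical constraints all match the paper), but there is a genuine gap in your treatment of the degenerate case $b=0$, $a=0$, $c=0$. You dismiss it with ``forces a sub/quotient of class $(2,2H,d-2)$, i.e.\ no actual wall,'' but a \emph{subobject} with truncated twisted character $(0,0,0)$ in the doubly tilted heart $\Coh^0_{\alpha,-1}(X)$ is a sheaf $\cO_Z$ supported on points; it has $Z^0_{\alpha,-1}=0$, hence infinite slope, and it genuinely destabilizes $E$ at the value $\alpha_0^2=1-\tfrac{2}{d}$ (which lies in the relevant range precisely because $d\geqslant 3$). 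This is exactly where the hypothesis $E\in\cO_X^\perp$ enters: $\Hom(\cO_X,\cO_Z)=0$ forces $Z=\emptyset$. Your closing remark attributes the orthogonality hypothesis to excluding an $\cO_X$-subobject or $\cO_X[1]$-quotient, but $\cO_X$ has truncated twisted character $(1,H,d/2)$ at $\beta=-1$, while the wall equation with $a=b=1$ forces the third entry to be $(d-2)/2\neq d/2$; so $\cO_X$ never sits on a numerical wall here, and the hypothesis is needed for the point-supported subobject instead. As written, your argument does not close this case.

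For $b=1$ your route differs from the paper's: you invoke Li's bound (Proposition 3.10) to control $H\ch_2$ of the destabilizer, whereas the paper only uses the Bogomolov inequality $\Delta\geqslant 0$ on the complementary piece, which yields $a(a-1)\alpha^2\leqslant 1-a(1-\tfrac{2}{d})$ and kills all cases for $d\geqslant 4$ and all $a\geqslant 3$ for $d=3$; the remaining case $d=3$, $a=2$ is excluded by integrality of $c=3\alpha^2+1\in(1,\tfrac32]$. Your approach can be made to work for that residual case (Li's bound for $d=3$ at slope $-\tfrac12$ gives $H\ch_2\leqslant 0$ while the wall equation forces $H\ch_2>0$), but it carries the extra burden you yourself flag: Li's bound applies to tilt-\emph{stable} objects, so you must first pass from the $\sigma^0$-semistable destabilizer to the cohomology objects in $\Coh^{-1}(X)$ and then to Jordan--H\"older factors, checking that the slope and $\Delta$ constraints descend. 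Since none of this is carried out, the $b=1$ case remains a plausible plan rather than a proof; the paper's elementary argument avoids Li's bound entirely and is the cleaner route here.
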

\begin{proof}
    The proof is similar to that of Proposition \ref{stab}. We provide it for the convenience of the readers. The twisted Chern character of $E$ is 
    \begin{align*}
        \ch^{-1}_{\leqslant 2}(E)=(2,2H,d-2)
    \end{align*}
    A wall will provide a equation of the twisted Chern characters:
    \begin{align}\label{split}
        (2,2H,d-2)=(a,bH,\frac{c}{2})+(2-a,(2-b)H,d-2-\frac{c}{2})
    \end{align}
    with $a,b,c\in\Z$ satisfying
    \begin{enumerate}
    \item $\frac{1}{b}(\frac{c}{2}-\half d\alpha^2a)=\frac{d-2-d\alpha^2}{2}=\frac{1}{2-b}(d-2-\frac{c}{2}-\half d\alpha^2(2-a))$;
    \item $b^2-\frac{ac}{d}\geqslant 0$, $(2-b)^2-\frac{(2-a)(2d-4-c)}{d}\geqslant 0$
    \item $b^2-\frac{ac}{d}\leqslant \frac{8}{d}$, $(2-b)^2-\frac{(2-a)(2d-4-c)}{d}\leqslant \frac{8}{d}$. 
\end{enumerate}
By the definition of a stability function on an abelian category, $b$ and $2-b$ cannot have opposite signs, so $b(2-b)\geqslant 0$ and thus $0\leqslant b\leqslant 2$.\ 

Suppose $b=0$. If $a\neq 0$, then 
\begin{align*}
    \half d\alpha^2=\frac{c}{2a}>0 \text{  and  } \frac{ac}{d}\leqslant 0
\end{align*}
which is impossible. If $a=0$, then $c=0$. If the twisted Chern character $(a,bH,\frac{c}{2})$ corresponds to a subobject, then $d-2-d\alpha_0^2=0$. So $\alpha_0^2=1-\frac{2}{d}$. In this case we have a short exact sequence in $\Coh^0_{\alpha_0,-1}$
\begin{align*}
    0\to\cO_Z\to E[s]\to B\to 0
\end{align*}
where $s\in\Z$ is a potential shift and $Z$ is $0$-dimensional subscheme on $X$. One can easily check $\cO_X\in\Coh^0_{(\alpha_0,-1)}$. By the assumption that $E\in\cO_X^\perp$, we have $\Hom(\cO_X,\cO_Z)=\Hom(\cO_X,B[-1])=0$, this is absurd if $Z$ is non-empty.
 On the other hand, having a quotient object with twisted Chern character $(0,0,0)$ does not affect semistability. The case $b=2$ follows by symmetry.\ 

Suppose $b=1$, we have $c=d\alpha^2(a-1)+d-2$. Without loss of generality, we assume $a\geqslant 1$. Note if $a=1$, the two terms on the right hand side of equation (\ref{split}) are equal, which means such a wall will not affect semistability. For $a\geqslant 2$, the first equation of (2) provides
\begin{align*}
    a(a-1)\alpha^2\leqslant 1-a(1-\frac{2}{d}).
\end{align*}
For $d\geqslant 4$, we get 
\begin{align*}
    a(a-1)\alpha^2\leqslant 1-2(1-\half)=0.
\end{align*}
which is absurd since the left hand side is positive.\

For $d=3$, we obtain similar contradiction as above if $a\geqslant 3$. If $a=2$, $\alpha^2\leqslant \frac{1}{6}$. In this case $c$ cannot be an integer.
\end{proof}

\begin{prop}\label{stabrev}
Assume $d\geqslant 3$. If $F\in \Ku(X)$ is $\sigma$-stable for some $\sigma\in \mathcal{K}$ with $[F]=2[\cI_l]\in\mathcal{N}(\mathsf{Ku}(X))$, then $F\cong E[2k]$ for some Gieseker-stable sheaf $E$ of rank $2, c_1(E)=0,c_2(E)=2, c_3(E)=0$ and $k\in \Z$. 
\end{prop}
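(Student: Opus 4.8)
The plan is to run the standard ``reverse'' argument that converts Bridgeland (semi)stability in $\Ku(X)$ back into classical (semi)stability of a sheaf, exploiting the fact that the family $\sigma(\alpha,\beta)$ and the tilt weak stability conditions $\sigma^0_{\alpha,\beta}$ on $\mathrm{D}^b(X)$ are related by wall-crossing as $\beta\to -1$, together with Proposition \ref{2G} (the limit $\alpha\gg 0$ of tilt stability is $2$-Gieseker stability). Since $\mathcal{K}$ is a single $\tilde{\mathrm{GL}}_2^+(\R)$-orbit, $\sigma$-stability of $F$ is equivalent to $\sigma(\alpha_0,-\half)$-stability, so we may work with the specific heart $\mathcal{A}(\alpha_0,-\half)=\Coh^0_{\alpha_0,-\half}(X)\cap\Ku(X)$. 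Replacing $F$ by a shift, assume $F\in\mathcal{A}(\alpha_0,-\half)$; its numerical class $2[\cI_l]$ forces $\mathrm{ch}_{\leqslant 2}(F)=(2,0,-2,0)$ (up to the known identification), matching the hypothesis of Lemma \ref{nowall-1}.

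The key steps, in order: (1) First I would show that $F$, viewed in $\mathrm{D}^b(X)$, is $\sigma^0_{\alpha,\beta}$-semistable for $(\alpha,\beta)$ near $(\alpha_0,-\half)$; this is the compatibility between the induced stability condition on $\Ku(X)$ and tilt stability on the ambient category, and it is where one uses that $F\in\Ku(X)=\langle\cO_X,\cO_X(1)\rangle^\perp$ to rule out destabilizers coming from the exceptional objects. (2) Next I would deform $\beta$ from $-\half$ down towards $-1$: by Lemma \ref{nowall-1} the vertical line $\beta=-1$ meets no actual wall for $\sigma^0_{\alpha,\beta}$-stability of an object with this Chern character lying in $\cO_X^\perp$, and by the usual wall-and-chamber structure there are only finitely many walls in the strip, so $F$ remains $\sigma^0_{\alpha,-1}$-semistable, and then $\sigma_{\alpha,-1}$-semistable (tilt-semistable), for a suitable range of $\alpha$ — in particular for $\alpha\gg 0$, again invoking Lemma \ref{nowall-1} to cross no wall as $\alpha\to\infty$. (3) By Proposition \ref{2G} (with $\beta=-1<0=\mu(F)$), $F$ is then a $2$-Gieseker-semistable coherent sheaf with $\mathrm{ch}(F)=(2,0,-2,0)$; here one must also check $F$ is an actual sheaf and not a shifted complex, which follows from it lying in the tilted heart with the right Chern character and having no cohomology in other degrees (a short argument with the heart $\Coh^{-1}(X)$). (4) By Lemma \ref{2 Gieseker to Gieseker}, $F$ is Gieseker-semistable; and since $F$ was assumed $\sigma$-\emph{stable}, Remark \ref{semistab} (extensions of two ideal sheaves of lines are the only strictly $\sigma$-semistable objects of class $2[\cI_l]$, and these are not $\sigma$-stable) rules out $F$ being such an extension, so $F$ is in fact Gieseker-stable. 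Relabelling $F=E$ and tracking the shift gives $E[2k]$ with $E$ Gieseker-stable of the stated Chern character; the parity of the shift being even comes from $2[\cI_l]$ (an even shift preserves the numerical class in $\mathcal{N}(\Ku(X))$ with the correct sign while an odd one would flip it).

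The main obstacle I expect is step (2): carefully controlling the wall-and-chamber structure for $\sigma^0_{\alpha,\beta}$ across the region between $\beta=-\half$ and $\beta=-1$ and for $\alpha$ large, and in particular making precise the passage between $\sigma^0_{\alpha,\beta}$-stability (the rotated weak stability condition used to induce $\sigma(\alpha,\beta)$ on $\Ku(X)$) and ordinary tilt stability $\sigma_{\alpha,\beta}$, so that Lemma \ref{nowall-1} and Proposition \ref{2G} can legitimately be chained together. One must ensure that no destabilizing subobject of $F$ in the tilted heart fails to lie in $\Ku(X)$ in a way that invalidates the reduction; this is handled by the orthogonality $F\in\cO_X^\perp$ (used already inside Lemma \ref{nowall-1}) together with the fact that the exceptional bundles $\cO_X$, $\cO_X(1)$ have controlled tilt-slopes in this region. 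The remaining steps are essentially bookkeeping with Chern characters and appeals to Lemmas \ref{2 Gieseker to Gieseker}, \ref{nowall-1}, Proposition \ref{2G}, and Remark \ref{semistab}.
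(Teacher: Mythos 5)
Your overall strategy --- deform the stability parameters so that $F$ (up to shift) becomes tilt-semistable, invoke Proposition \ref{2G} to land on a 2-Gieseker-semistable sheaf, then apply Lemma \ref{2 Gieseker to Gieseker} and Remark \ref{semistab} --- is the same skeleton as the paper's argument, and your endgame (step (4) and the parity of the shift) is fine. But the proposal has a genuine gap exactly where you flag ``the main obstacle I expect'': you never resolve it, and resolving it is most of the proof. First, your step (1) is not automatic: $\sigma(\alpha_0,-\half)$-stability of $F$ in $\Ku(X)$ does not directly give $\sigma^0_{\alpha,\beta}$-semistability in $\mathrm{D^b}(X)$, because a destabilizing subobject in $\Coh^0_{\alpha,\beta}(X)$ need not lie in $\Ku(X)$. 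The paper sidesteps this by starting on the parabola $\beta^2-\alpha^2=2/d$ (which meets the region $V$ precisely because $d\geqslant 3$ --- this is where that hypothesis enters), where $G=F[1]$ has infinite $\sigma^0$-slope and is therefore $\sigma^0$-semistable for free. Second, ``by the usual wall-and-chamber structure there are only finitely many walls\dots so $F$ remains semistable'' is not an argument: crossing an actual wall destroys semistability. There is in fact a concrete numerical wall in the way --- the semicircle $\mathcal{C}$ with center $(0,-\tfrac{d+2}{2d})$ and radius $\tfrac{d-2}{2d}$, potentially realized by $\cO_X(-1)[2]$ --- and Lemma \ref{nowall-1} only controls the vertical line $\beta=-1$, not $\mathcal{C}$. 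The paper must split into two cases; when $\mathcal{C}$ is an actual wall it enumerates eleven possible Chern characters for the destabilizing pair $(P,Q)$ and rules them all out in Lemmas \ref{first}--\ref{last}, using Li's bounds (Proposition \ref{Libound}), the classification of tilt-stable objects with $\Delta=0$, and a delicate construction of a modified object $G'$. None of this is present or replaceable by a soft argument in your proposal.

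There is also a smaller but real gap in your step (3): once $\beta$ is pushed back to $-\half$, the object sits in the double-tilted heart and fits in a triangle $A[1]\to G\to B$ where $B$ may be a nonzero sheaf supported on points, so $\ch(A)=(2,0,-2,l)$ with $l\geqslant 0$ a priori. Showing $l=0$ (so that $F$ is honestly a sheaf with $c_3=0$) requires the Bogomolov--Gieseker-type inequality of Li/BMS to bound $l$, plus, for $d=3,4$, an elementary modification argument to exclude $l=1$ using $F\in\Ku(X)$. Your ``short argument with the heart $\Coh^{-1}(X)$'' does not address this. In summary: right road map, but the two hard steps --- controlling the wall $\mathcal{C}$ and killing the point-supported piece --- are missing.
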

\begin{proof}
We follow the idea of the proof of \cite[Proposition 4.6]{PY}. As $[F]=2[\cI_l]\in \mathcal{N}(\mathsf{Ku}(X))$ and $\chi(\cI_l,\cI_l)=-1$, the following conditions hold:
    \begin{align*}
        \chi(\cO_X,F)=0,\chi(\cO_X(1),F)=0,\chi(\cI_l,F)=-2,\chi(F,\cI_l)=-2.
    \end{align*}
By Hirzebruch-Riemann-Roch Theorem and a similar computation as \cite{PY}, we obtain $\ch(F)=2\ch(\cI_l)=(2,0,-2,0)$.\ 

Recall that $V$ is the triangular region parametrizing stability conditions on $\Ku(X)$ defined after Theorem 3.15. By \cite[Proposition 3.6]{PY} and the assumption, $F$ is $\sigma(\alpha,\beta)$-stable for every $(\alpha,\beta)\in V$. In particular, for all $(\alpha,\beta)\in V$ satisfying $\beta^2-\alpha^2\leqslant 2/d$, we have $F[2k+1]\in\mathcal{A}(\alpha,\beta)$ for some integer $k$. Up to shifting, we assume $G:=F[1]\in\mathcal{A}(\alpha,\beta)$ is $\sigma(\alpha,\beta)$-stable for $(\alpha,\beta)$ satisfying $\beta^2-\alpha^2\leqslant 2/d$. Then $G$ has slope
\begin{align*}
   \mu_{ \sigma^0_{\alpha,\beta}}(G)=\frac{-\beta}{\frac{1}{d}+\half\alpha^2-\half\beta^2}.
\end{align*}
In particular, $ \mu_{\sigma^0_{\alpha,\beta}}(G)=+\infty$ if 
\begin{align}\label{parabola}
    \beta^2-\alpha^2=2/d. 
\end{align}
Since $d\geqslant 3$, then there exists pairs $(\bar{\alpha},\bar{\beta})\in V$ so that (\ref{parabola}) holds. $G$ is $\sigma^0_{\bar{\alpha},\bar{\beta}}$ semistable as it has the largest slope ($+\infty$) in heart. By Lemma \ref{nowall-1}, $\beta=-1$ is not on a wall for the $\sigma^0_{\alpha,\beta}$-stability of $G$. Moreover, the semicircle $\mathcal{C}$ with center $(0,-\frac{d+2}{2d})$ and radius $\frac{d-2}{2d}$ gives a numerical wall for $G$, potentially realized by $\cO_X(-1)[2]\in\Coh^0_{\alpha,\beta}(X)$.\ 

Assume that $\mathcal{C}$ is not an actual wall for $G$. All other walls would be nested semicircles in $\mathcal{C}$. Thus we may choose $(\bar{\alpha},\bar{\beta})=(\frac{d-2}{2d},-\frac{d+2}{2d})\in\mathcal{C}$, so that $G$ is $\sigma^0_{\bar{\alpha},\bar{\beta}}$-semistable and remains so for $\bar{\beta}$ approaching $-\half$.\ 

By definition of $\Coh^0_{\alpha,-\half}(X)$, we have a triangle
\begin{align*}
    A[1]\to G\to B
\end{align*}
where $A$(resp. $B$) is in $\Coh^{-\half}(X)$ with $\sigma_{\alpha,-\half}$-semistable factors having slope $\mu_{\alpha,-\half}\leqslant 0$(resp. $\geqslant 0$). Since $G$ is $\sigma^0_{\alpha,-\half}$-semistable, $B$ is either $0$ or supported on points. In any case, $A[1]$ is also $\sigma^0_{\alpha,-\half}$-semistable, which implies $A$ is $\sigma_{\alpha,-\half}$-semistable. Note $\ch(A)=(2,0,-2,l)$, where $l\geqslant 0$ is the length of the support of $B$. The wall computation in Proposition \ref{stab} shows that $A$ is $\sigma_{\alpha,-\half}$-semistable for every $\alpha>0$ if $d=3,4$ and $\alpha>\frac{1}{\sqrt{20}}$ if $d=5$. By Proposition \ref{2G}, $A$ is a 2-Gieseker-semistable sheaf. When $d=5$, applying \cite[Conjecture 4.1]{BLMS},\cite{Li} for $\alpha=\frac{1}{\sqrt{20}}$ $\beta=-\half$, we obtain
\begin{align*}
    l&\leqslant \frac{4}{3}\alpha^2+\frac{1}{3}+\frac{8}{3d}\\
    &\leqslant \frac{4}{3}\cdot\frac{1}{20}+\frac{1}{3}+\frac{8}{15}=\frac{14}{15}
\end{align*}
Thus $l=0$ if $d=5$. 
When $d\neq 5$, applying \cite[Conjecture 4.1]{BLMS},\cite{Li} for $\alpha=0$ $\beta=-\half$, we obtain
\begin{align*}
    l&\leqslant \frac{1}{3}+\frac{8}{3d}
\end{align*}
for $d\geqslant 3$. Then $l$ is $0$ or $1$ for $3\leqslant d\leqslant 4$.  We claim $l=0$. Suppose otherwise, since $G\in \mathsf{Ku}(X)$, we have $H^i(A)=0$ for all $i$ except when $i=2$ and $H^2(A)=\C$. Let $q$ be a generic point on $X$ and consider the elementary modification $A'$ of $A$ defined by the short exact sequence
\begin{align}\label{elemod}
    0\to A'\to A\to \cO_q\to 0.
\end{align}
Then $A'$ is a 2-Gieseker-semistable sheaf with $\ch(A')=(2,0,-2,0)$. By Lemma \ref{2 Gieseker to Gieseker}, $A'$ is Gieseker-semistable. By \cite{D},\cite{Q2}, $A'\in \mathsf{Ku}(X)$. Along with (\ref{elemod}), we see $H^2(A)=0$, which contradicts our computation above. Hence $l=0$ for all $d=3,4,5$. Consequently, $G=A[1]$ and $F=A\in \mathsf{Ku}(X)$ is a 2-Gieseker-semistable sheaf. By Lemma \ref{2 Gieseker to Gieseker}, $F$ is a Gieseker-semistable sheaf with rank $2, c_1(E)=0,c_2(E)=2, c_3(E)=0$. $F$ is Gieseker-stable since otherwise $F$ would be an extension of two ideal sheaves of lines, making it strictly $\sigma$-semistable.\ 

Assume $\mathcal{C}$ defines an actual wall for $G$ and $G$ becomes unstable for $\beta\to -\half$. Set $(\bar{\alpha},\bar{\beta})=(\frac{d-2}{2d},-\frac{d+2}{2d})$. $G$ is then strictly $\sigma^0_{\bar{\alpha},\bar{\beta}}$-semistable and we have a short exact sequence
\begin{align*}
    0\to P\to G\to Q\to 0
\end{align*}
in $\Coh^0_{\bar{\alpha},\bar{\beta}}(X)$, where $P,Q$ are $\sigma^0_{\bar{\alpha},\bar{\beta}}$-semistable with infinite slope. It is a tedious exercise to find the potential cases using
\begin{itemize}
    \item $\Im(Z^0_{\bar{\alpha},\bar{\beta}}(P))=\Im(Z^0_{\bar{\alpha},\bar{\beta}}(Q))=0$;
    \item $\Re(Z^0_{\bar{\alpha},\bar{\beta}}(P))\leqslant0, \Re(Z^0_{\bar{\alpha},\bar{\beta}}(Q))\leqslant 0$;
    \item $\Delta(P)\geqslant 0$, $\Delta(Q)\geqslant0$
\end{itemize}
We list them here:
\begin{enumerate}
    \item $d=5, \ch_{\leqslant 2}(P)=(1,-1,\frac{5}{2}), \ch_{\leqslant 2}(Q)=(-3,1,-\half)$;
    \item $d=4, \ch_{\leqslant 2}(P)=(0,-1,3), \ch_{\leqslant 2}(Q)=(-2,1,-1))$;
    \item $d=4, \ch_{\leqslant 2}(P)=(1,-1,2), \ch_{\leqslant 2}(Q)=(-3,1,0)$;
    \item $d=4, \ch_{\leqslant 2}(P)=(2,-2,4), \ch_{\leqslant 2}(Q)=(-4,2,-2)$;
    \item $d=3, \ch_{\leqslant 2}(P)=(0,-1,\frac{5}{2}), \ch_{\leqslant 2}(Q)=(-2,1,-\half)$;
    \item $d=3, \ch_{\leqslant 2}(P)=(1,-2,4), \ch_{\leqslant 2}(Q)=(-3,2,-2)$;
    \item $d=3, \ch_{\leqslant 2}(P)=(2,-3,\frac{11}{2}), \ch_{\leqslant 2}(Q)=(-4,3,-\frac{7}{2})$;
    \item $d=3, \ch_{\leqslant 2}(P)=(1,-1,\frac{3}{2}), \ch_{\leqslant 2}(Q)=(-3,1,\half)$;
    \item $d=3, \ch_{\leqslant 2}(P)=(2,-2,3), \ch_{\leqslant 2}(Q)=(-4,2,-1)$;
    \item $d=3, \ch_{\leqslant 2}(P)=(3,-3,\frac{9}{2}), \ch_{\leqslant 2}(Q)=(-5,3,-\frac{5}{2})$;
    \item $d=3, \ch_{\leqslant 2}(P)=(4,-4,6), \ch_{\leqslant 2}(Q)=(-6,4,-4)$.
\end{enumerate}
All the cases are ruled out by Lemmas \ref{first} to \ref{last}. This completes the proof.
\end{proof}

\begin{lem}\label{first}
Cases (2)(6) can be ruled out. 
\end{lem}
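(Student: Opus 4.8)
The plan is to rule out both cases by showing that the quotient $Q$ violates the refined Bogomolov--Gieseker inequality of Proposition~\ref{Libound}. Recall the situation from the last part of the proof of Proposition~\ref{stabrev}: we are given a short exact sequence $0\to P\to G\to Q\to 0$ in $\Coh^0_{\bar\alpha,\bar\beta}(X)$, with $(\bar\alpha,\bar\beta)=\bigl(\tfrac{d-2}{2d},-\tfrac{d+2}{2d}\bigr)$, where $P$ and $Q$ are $\sigma^0_{\bar\alpha,\bar\beta}$-semistable of infinite slope and have the truncated Chern characters listed in (2) and (6). Since $\sigma^0_{\bar\alpha,\bar\beta}$ is the image of the tilt stability $\sigma_{\bar\alpha,\bar\beta}$ under the $\widetilde{\mathrm{GL}}_2^+(\R)$-action of Proposition~\ref{2nd tilt}, and (semi)stability is invariant under this action, $Q$ is tilt-semistable (up to shift) for $\sigma_{\bar\alpha,\bar\beta}$. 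Note that $P$ is inaccessible to Proposition~\ref{Libound} in both cases --- it has rank $0$ in case (2), and $\mu_H(P)=-2$ in case (6), outside the range of that proposition --- so $Q$ is what I would study.

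First I would record the numerics. In case (2), $d=4$ and $\ch_{\leqslant2}(Q)=(-2,H,-\ell)$, so $\mu_H(Q)=-\tfrac12$ and $\Delta_H(Q)=4^2-2(-8)(-1)=0$. In case (6), $d=3$ and $\ch_{\leqslant2}(Q)=(-3,2H,-2\ell)$, so $\mu_H(Q)=-\tfrac23$ and $\Delta_H(Q)=6^2-2(-9)(-2)=0$. Next, using that $\Delta_H$ is non-negative on tilt-semistable objects and that a sub- or quotient of a tilt-semistable object of the same slope has discriminant at most that of the object, the vanishing $\Delta_H(Q)=0$ forces all $\sigma_{\bar\alpha,\bar\beta}$-stable Jordan--H\"older factors $Q'$ of $Q$ to have $\Delta_H(Q')=0$; the Lorentzian signature of the polarisation of $\Delta_H$ then forces the $\Lambda^2$-classes of these factors to be positive multiples of that of $Q$ (and in both cases $v^2_H(Q)$ is primitive, so in fact $Q'=Q$). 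In particular $\ch_0(Q')\neq0$, and $Q'$ has the same $\mu_H$ and the same reduced second Chern character $\tfrac{H\ch_2(Q')}{H^3\ch_0(Q')}=\tfrac{H\ch_2(Q)}{H^3\ch_0(Q)}$ as $Q$. Applying Proposition~\ref{Libound} to $Q'$ finishes the argument: in case (2), $|\mu_H(Q')|=\tfrac12\in[\tfrac{\sqrt3}{4},1-\tfrac{\sqrt3}{4}]$ forces $\tfrac{H\ch_2(Q')}{H^3\ch_0(Q')}\leqslant\tfrac12\mu_H(Q')^2-\tfrac{3}{32}=\tfrac1{32}$, contradicting the actual value $\tfrac18$; in case (6), $\tfrac12<|\mu_H(Q')|=\tfrac23\leqslant1$ forces $\tfrac{H\ch_2(Q')}{H^3\ch_0(Q')}\leqslant|\mu_H(Q')|-\tfrac12=\tfrac16$, contradicting the actual value $\tfrac29$.

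The step I expect to be the main obstacle is the reduction to tilt-\emph{stable} objects, which is needed because Proposition~\ref{Libound} is stated for stable objects: I must argue carefully that $\Delta_H(Q)=0$ propagates to the Jordan--H\"older factors of $Q$ and, via the polarisation identity for $\Delta_H$ together with the positivity of its pairing on isotropic classes lying in a common half of the null cone, that these factors have $\Lambda^2$-classes proportional to $v^2_H(Q)$; the only delicate point inside this is excluding factors whose $\Lambda^2$-class vanishes, which contribute nothing to the pairing. Everything else --- the Chern-character bookkeeping and the verification that $|\mu_H(Q)|$ lands inside the effective range of Proposition~\ref{Libound} --- is routine and is confirmed by the computations above.
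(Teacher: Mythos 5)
Your overall strategy is the same as the paper's: reduce to a tilt-stable object carrying the class of $Q$ and contradict Proposition~\ref{Libound}(2) (case (2)) and (4) (case (6)). Your numerics are correct and identical to the paper's ($\mu_H=-\tfrac12$, ratio $\tfrac18$ vs.\ bound $\tfrac1{32}$ for $d=4$; $\mu_H=-\tfrac23$, ratio $\tfrac29$ vs.\ bound $\tfrac16$ for $d=3$), and your Lorentzian-cone reduction to stable Jordan--H\"older factors is exactly the content of \cite[Lemma 3.9, Corollary 3.10]{BMS}, which is what the paper invokes.

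The one step that does not hold as stated is the very first one: ``$Q$ is tilt-semistable (up to shift) for $\sigma_{\bar\alpha,\bar\beta}$ because $\sigma^0_{\bar\alpha,\bar\beta}$ is a $\tilde{\mathrm{GL}}_2^+(\R)$-translate of $\sigma_{\bar\alpha,\bar\beta}$.'' For \emph{weak} stability conditions this transfer fails precisely at infinite slope, which is where $Q$ lives: every object of $\Coh^0_{\bar\alpha,\bar\beta}(X)$ with $\Im Z^0_{\bar\alpha,\bar\beta}=0$ is automatically $\sigma^0_{\bar\alpha,\bar\beta}$-semistable, and such an object is in general a genuine two-term complex, an extension of a zero-dimensional sheaf $\cH^0(Q)$ by $\cH^{-1}(Q)[1]$ with $\cH^{-1}(Q)$ tilt-semistable of tilt-slope $0$ (e.g.\ $A[1]\oplus\cO_p$ is $\sigma^0$-semistable of infinite slope but is not a shift of any tilt-semistable object). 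So ``$Q$ is tilt-semistable up to shift'' is not justified, and your later assertion ``$Q'=Q$'' does not even typecheck if $\cH^0(Q)\neq0$. The repair is exactly what the paper does: take cohomology of $Q$ with respect to the heart $\Coh^{\bar\beta}(X)$, observe that $\cH^0(Q)$ must be supported on points (so $\ch_{\leqslant2}(\cH^{-1}(Q))=-\ch_{\leqslant2}(Q)$, and $\mu_H$, $\Delta_H$ and $H\ch_2/H^3\ch_0$ are unchanged), and run your $\Delta_H=0$ plus primitivity argument on $\cH^{-1}(Q)$ instead of on $Q$. With that substitution your computation goes through verbatim and coincides with the paper's proof.
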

\begin{proof}
Note in both cases $\Delta(Q)=0$ and $\gcd(\ch_0(Q),\ch_1(Q))=0$. Let $\cH^i(Q)$ denote the $i$-th cohomology of $Q$ in $\Coh^{\bar{\beta}}(X)$. Since $Q$ is $\sigma^0_{\bar{\alpha},\bar{\beta}}$-semistable, we see  $\cH^0(Q)$ is either $0$ or supported on points. Moreover, $\cH^{-1}(Q)$ is $\sigma_{\bar{\alpha},\bar{\beta}}$-semistable with $\mu_{\bar{\alpha},\bar{\beta}}(\cH^{-1}(Q))=0$. By \cite[Corollary 3.10]{BMS}, $\cH^{-1}(Q)$ is in fact $\sigma_{\bar{\alpha},\bar{\beta}}$-stable. Then cases (2)(6) are ruled out by Proposition \ref{Libound}(2) and (4) respectively.
\end{proof}

\begin{lem}
Cases (5)(7) can be ruled out.
\end{lem}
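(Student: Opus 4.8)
The plan is to follow the argument of Lemma \ref{first}. Recall from the proof of Proposition \ref{stabrev} that here $(\bar\alpha,\bar\beta)=\big(\tfrac{d-2}{2d},-\tfrac{d+2}{2d}\big)=\big(\tfrac16,-\tfrac56\big)$ since $d=3$, and that $G$ sits in a short exact sequence $0\to P\to G\to Q\to 0$ in $\Coh^0_{\bar\alpha,\bar\beta}(X)$ with $P$ and $Q$ $\sigma^0_{\bar\alpha,\bar\beta}$-semistable of infinite slope. As in the proof of Lemma \ref{first}, let $\cH^i(Q)$ be the $i$-th cohomology of $Q$ with respect to $\Coh^{\bar\beta}(X)$; the infinite-slope $\sigma^0_{\bar\alpha,\bar\beta}$-semistability of $Q$ forces $\cH^0(Q)$ to be zero or supported on points and $\cH^{-1}(Q)$ to be $\sigma_{\bar\alpha,\bar\beta}$-semistable with $\mu_{\bar\alpha,\bar\beta}(\cH^{-1}(Q))=0$. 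Since a $0$-dimensional sheaf contributes nothing in degrees $\leqslant 2$, this gives $\ch_{\leqslant 2}(\cH^{-1}(Q))=-\ch_{\leqslant 2}(Q)$.

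In case (7) one obtains $\ch_{\leqslant 2}(\cH^{-1}(Q))=(4,-3,\tfrac72)$, and a direct computation shows that the discriminant $\Delta$ of (\ref{BG}) satisfies $\Delta(\cH^{-1}(Q))<0$. This contradicts the Bogomolov--Gieseker inequality (\ref{BG}), valid for every $\sigma_{\bar\alpha,\bar\beta}$-semistable object. Hence case (7) is impossible.

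In case (5) one gets instead $\ch_{\leqslant 2}(\cH^{-1}(Q))=(2,-1,\tfrac12)$, so $\mu_H(\cH^{-1}(Q))=-\tfrac12$, but now $\Delta(\cH^{-1}(Q))>0$ and the discriminant trick fails. Here I would first argue that $\cH^{-1}(Q)$ is in fact $\sigma_{\bar\alpha,\bar\beta}$-stable: its Jordan--H\"older factors would be $\sigma_{\bar\alpha,\bar\beta}$-stable of slope $0$, and a finite bookkeeping of the admissible slope-$0$ Chern characters at $(\bar\alpha,\bar\beta)$ --- subject to $\Delta\geqslant0$, to $H^2\ch_1^{\bar\beta}\geqslant 0$, and, for the possible complementary class $(3,-2,2)$, to Proposition \ref{Libound}(4) --- rules out every nontrivial splitting of $(2,-1,\tfrac12)$. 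With $\cH^{-1}(Q)$ now $\sigma_{\bar\alpha,\bar\beta}$-stable and $\mu_H(\cH^{-1}(Q))=-\tfrac12\in[-\tfrac12,\tfrac12]$, Proposition \ref{Libound}(3) gives $\frac{H\ch_2(\cH^{-1}(Q))}{H^3\ch_0(\cH^{-1}(Q))}\leqslant0$; but this quantity equals $\frac{1}{12}>0$, a contradiction. So case (5) is impossible as well.

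The main obstacle is case (5). Unlike in Lemma \ref{first} and in case (7), the tilt-semistable object $\cH^{-1}(Q)$ has strictly positive discriminant, so one gets nothing from the Bogomolov--Gieseker inequality, and Proposition \ref{Libound} is only available for tilt-\emph{stable} objects. The real work is therefore to promote semistability to stability for $\cH^{-1}(Q)$ --- equivalently, to exclude a strictly semistable decomposition of $\cH^{-1}(Q)$ into slope-$0$ pieces at $(\bar\alpha,\bar\beta)$. I expect this to come down to the finite enumeration indicated above, possibly supplemented by the fact that neither $P$ nor $Q$ can belong to $\Ku(X)$ (which follows because $G$ is $\sigma(\bar\alpha,\bar\beta)$-stable in $\Ku(X)$ while $P$ and $Q$ would share its infinite slope).
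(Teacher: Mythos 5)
Your proposal is correct and, for case (5), is essentially the paper's argument: if $\cH^{-1}(Q)$ is tilt-stable you invoke Proposition \ref{Libound}(3) against $\frac{H\ch_2}{H^3\ch_0}=\frac{1}{12}>0$, and in the strictly semistable case you reduce to the single admissible splitting of $(2,-1,\tfrac12)$ into slope-zero, $\Delta\geqslant 0$ classes, whose nontrivial piece $(3,-2,2)$ is killed by Proposition \ref{Libound}(4) (indeed $\frac{2}{9}>\frac{2}{3}-\frac12$) --- exactly what the paper does by referring back to its case (6). The one point you elide is that Proposition \ref{Libound} applies only to tilt-\emph{stable} objects, so before using part (4) on the $(3,-2,2)$ factor you must upgrade it to stability; this follows from $\Delta=0$ together with $\gcd(3,2)=1$ via \cite[Corollary 3.10]{BMS}, which is how the paper handles it in Lemma \ref{first}. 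For case (7) you take a genuinely shorter route: with $\ch_{\leqslant 2}(\cH^{-1}(Q))=(4,-3,\tfrac72)$ one has $\Delta=81-2\cdot 12\cdot\tfrac72=-3<0$, contradicting the discriminant inequality for tilt-semistable objects, so the case dies immediately. Your arithmetic here is right, and it is worth noting that it contradicts the paper's own assertion that $\Delta(\cH^{-1}(Q))/3=1$ ``in both cases'' (for case (7) the value is $-1$); in fact, since $\Delta$ is invariant under sign change and $\beta$-twist, the class in case (7) already violates the screening condition $\Delta(Q)\geqslant 0$ used to generate the list, so either the listed character is a typo or the case is vacuous --- either way your elimination of it stands, and is cleaner than the paper's.
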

\begin{proof}
Arguing as in the previous lemma, we get $\cH^{-1}(Q)$ is $\sigma_{\bar{\alpha},\bar{\beta}}$-semistable with $\mu_{\bar{\alpha},\bar{\beta}}(\cH^{-1}(Q))=0$. If $\cH^{-1}(Q)$ is in fact $\sigma_{\bar{\alpha},\bar{\beta}}$-stable, then cases (5)(7) can be ruled out by Proposition \ref{Libound}(3) and (4) respectively.

If $\cH^{-1}(Q)$ is strictly $\sigma_{\bar{\alpha},\bar{\beta}}$-semistable. 
Note for any $F\in \Coh^{\bar{\beta}}(X)$, $\Delta(F)/d$ is an integer. In both cases (5)(7), we have $\Delta(\cH^{-1}(Q))/3=1$. By \cite[Lemma 3.9]{BMS} \cite[Corollary 3.10]{BMS}, there is a short exact sequence
\begin{align*}
    0\to Q'\to \cH^{-1}(Q)\to Q''\to 0
\end{align*}
in $\Coh^{\bar{\beta}}$ where $Q'$ and $Q''$ are $\sigma_{\bar{\alpha},\bar{\beta}}$-semistable, with $\mu_{\bar{\alpha},\bar{\beta}}(Q')=\mu_{\bar{\alpha},\bar{\beta}}(Q'')=0$ and $\Delta(Q')=\Delta(Q'')=0$. It is a straightforward computation to check that we will have either $Q'$ or $Q''$ with $\ch_{\leqslant2}=(3,-2,2)$. We can then draw a contradiction using the arguments in the previous lemma for case (6).
\end{proof}

\begin{lem}
Case (1) can be ruled out.
\end{lem}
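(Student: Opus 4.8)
The plan is to identify the destabilizing sub-object $P$ precisely and then contradict $G\in\Ku(X)$ by a one-line $\Hom$-computation, without invoking Proposition \ref{Libound} at all. Here $(\bar\alpha,\bar\beta)=\bigl(\tfrac{d-2}{2d},-\tfrac{d+2}{2d}\bigr)=\bigl(\tfrac3{10},-\tfrac7{10}\bigr)$, and a direct computation from $\mathrm{ch}_{\leqslant2}(P)=(1,-1,\tfrac52)$ with $d=5$ gives $\Delta_H(P)=0$. This is no accident: since $\mathrm{ch}_2(\cO_X(-H))=\tfrac d2=\tfrac52$ for $d=5$, the truncated character of $P$ coincides with that of $\cO_X(-H)[2]$ — the object flagged in the proof of Proposition \ref{stabrev} as the potential realization of the wall $\mathcal C$. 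So Case (1) really is the question of whether $\cO_X(-H)[2]$ (or a variant) can destabilize $G$.

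I would first pin down $P$. Being $\sigma^0_{\bar\alpha,\bar\beta}$-semistable with vanishing discriminant, $P$ is — up to shift — a slope-semistable sheaf; since $\mathrm{ch}_0(P)=1$ and ranks are non-negative, this sheaf is a rank-one torsion-free sheaf $A$ with $\mathrm{ch}_{\leqslant2}(A)=(1,-1,\tfrac52)$, hence $A\cong\cI_Z(-H)$ for a subscheme $Z\subset X$, which moreover is zero-dimensional because $\mathrm{ch}_2(\cI_Z(-H))=\tfrac52-\deg(Z_1)$ with $Z_1$ the one-dimensional part of $Z$. Since $\mu_H(A)=-1<\bar\beta$ one has $A[1]\in\Coh^{\bar\beta}(X)$, with $\mu_{\bar\alpha,\bar\beta}(A[1])=0$, so $A[1]\in\mathcal F^0_{\sigma_{\bar\alpha,\bar\beta}}$ and $A[2]$ — and no other shift of $A$ — lies in the heart $\Coh^0_{\bar\alpha,\bar\beta}(X)$. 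Therefore $P\cong\cI_Z(-H)[2]$.

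The contradiction is then immediate. Recall $G=F[1]$, where $F$ is the Gieseker-semistable, hence torsion-free, rank-two sheaf with $\mathrm{ch}(F)=(2,0,-2,0)$ produced earlier in the proof of Proposition \ref{stabrev}. The destabilizing monomorphism $P\hookrightarrow G$ in the heart $\Coh^0_{\bar\alpha,\bar\beta}(X)$ is a nonzero morphism of $\mathrm{D^b}(X)$, so
\[
0\neq\Hom_{\mathrm{D^b}(X)}(P,G)=\Hom_{\mathrm{D^b}(X)}\bigl(\cI_Z(-H)[2],F[1]\bigr)=\Ext^{-1}_X\bigl(\cI_Z(-H),F\bigr),
\]
which is absurd since $\cI_Z(-H)$ and $F$ are coherent sheaves. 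The only delicate point is the rigidity step: one must exclude that $P$ is a genuine two-term complex with the same truncated character and zero discriminant, and then fix the cohomological shift. This is supplied by the standard fact that a tilt-semistable object of vanishing discriminant is, up to shift, a slope-semistable sheaf (here a twist of a line bundle modulo zero-dimensional torsion), together with the bookkeeping placing $\cI_Z(-H)[2]$ inside $\Coh^0_{\bar\alpha,\bar\beta}(X)$; any skyscraper summands of $P$ are harmless, as $\Hom_{\mathrm{D^b}(X)}(\cO_x[2],F[1])=\Ext^{-1}_X(\cO_x,F)=0$ too.
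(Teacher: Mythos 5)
There is a genuine gap, and it is the central step: your contradiction rests on the assertion that $G=F[1]$ for a torsion-free sheaf $F$ with $\ch(F)=(2,0,-2,0)$ ``produced earlier in the proof of Proposition~\ref{stabrev}''. But that conclusion is only reached in the \emph{other} branch of that proof, where the semicircle $\mathcal{C}$ is assumed not to be an actual wall. Case (1) lives in the branch where $\mathcal{C}$ \emph{is} an actual wall; there $G=F[1]$ with $F$ merely a $\sigma$-stable object of $\Ku(X)$, a priori an arbitrary complex, and the whole purpose of ruling out cases (1)--(11) is to eventually conclude that $F$ is a sheaf. So the step
$\Hom(P,G)=\Ext^{-1}_X(\cI_Z(-H),F)=0$ assumes exactly what is being proved. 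A quick sanity check: if one were allowed to take $F$ torsion-free here, the identical $\Ext^{-1}$ vanishing would dispose of every case in the list at once (in each case $P$ is, up to skyscrapers, a shifted sheaf sitting in cohomological degree $-2$), and Lemmas~\ref{first}--\ref{last} would be pointless; in particular the delicate elementary-modification argument of Lemma~\ref{last} exists precisely because $\Hom(\cO_X(-1)[2],F[1])$ need not vanish for a complex $F$.

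Your preliminary identification of $P$ is essentially sound and parallels what the paper does elsewhere: one first shows $\cH^0(P)$ (cohomology in $\Coh^{\bar\beta}(X)$) is supported on points and $\cH^{-1}(P)$ is tilt-semistable with $\Delta_H=0$, whence $P\cong\cI_Z(-H)[2]$ modulo skyscrapers --- note this classification is applied to $\cH^{-1}(P)$, not directly to the $\sigma^0_{\bar\alpha,\bar\beta}$-semistable object $P$. But this identification alone yields no contradiction (e.g.\ $P$ need not lie in $\cO_X^\perp$ just because $G$ does). The paper's actual argument goes through the quotient instead: $\cH^{-1}(Q)$ is tilt-semistable of slope $\mu_{\bar\alpha,\bar\beta}=0$ and rank $3$; if it is tilt-\emph{stable} its normalized $\ch_2$ violates Li's bound, Proposition~\ref{Libound}(1), and if it is strictly tilt-semistable one checks by a Diophantine analysis of the possible Jordan--H\"older characters (constrained by $\Delta/5\in\{0,1\}$) that no decomposition exists. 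To repair your write-up you would need to replace the final $\Hom$-computation by an argument that does not presuppose $F\in\Coh(X)$.
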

\begin{proof}
    As before, $\cH^{-1}(Q)$ is $\sigma_{\bar{\alpha},\bar{\beta}}$-semistable with $\mu_{\bar{\alpha},\bar{\beta}}(\cH^{-1}(Q))=0$. If $\cH^{-1}(Q)$ is in fact $\sigma_{\bar{\alpha},\bar{\beta}}$-stable, then cases (1) can be ruled out by Proposition \ref{Libound}(1).\ 
    
    If $\cH^{-1}(Q)$ is strictly $\sigma_{\bar{\alpha},\bar{\beta}}$-semistable. We have $\Delta(\cH^{-1}(Q))/5=2$. By \cite[Lemma 3.9]{BMS} \cite[Corollary 3.10]{BMS}, there is a short exact sequence
    \begin{align*}
        0\to Q'\to \cH^{-1}(Q)\to Q''\to 0
    \end{align*}
    in $\Coh^{\bar{\beta}}$ where $Q'$ and $Q''$ are $\sigma_{\bar{\alpha},\bar{\beta}}$-semistable, with $\mu_{\bar{\alpha},\bar{\beta}}(Q')=\mu_{\bar{\alpha},\bar{\beta}}(Q'')=0$ and $\Delta(Q')$,$\Delta(Q'')$ either $0$ or $5$. We denote $\ch_{\leqslant2}(Q')=(a,bH,\frac{c}{2})$, where $a,b,c\in\Z$. Then $\ch_{\leqslant2}(Q'')=(3-a,(-1-b)H,\frac{1-c}{2})$
    \begin{align*}
        \mu_{\bar{\alpha},\bar{\beta}}(Q')=0 &\Longleftrightarrow 2a+7b+c=0,\\
        \Delta(Q')/5=0\mbox{ or }1&\Longleftrightarrow 5b^2-ac=0\mbox{ or }1,\\
        \Delta(Q'')/5=0\mbox{ or }1&\Longleftrightarrow 5(1+b)^2-(3-a)(1-c)=0\mbox{ or }1.
    \end{align*}
    It is then straightforward but tedious to check the above equations have no integer solutions in $(a,b,c)$.
\end{proof}

\begin{lem}\label{last}
Cases (3)(4)(8)(9)(10)(11) can be ruled out.
\end{lem}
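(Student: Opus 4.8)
The plan is to rule out all six cases at once by exploiting a feature common to them. In each of (3), (4), (8), (9), (10), (11) (all with $d\in\{3,4\}$) one checks that $\mu_H(P)=-1$ and $\Delta_H(P)=0$, equivalently $\ch_{\leqslant 2}(P)=\ch_0(P)\cdot\ch_{\leqslant 2}(\cO_X(-1)[2])$, where $\ch_0(P)\in\{1,2,3,4\}$. So rather than study $Q$, as in the preceding lemmas, I would study $P$. Since $P$ is $\sigma^0_{\bar\alpha,\bar\beta}$-semistable of infinite slope with $\Delta_H(P)=0$, and since point sheaves are the only $\sigma^0_{\bar\alpha,\bar\beta}$-stable objects of infinite slope and rank zero, while $\cO_X(-1)$ is the unique line bundle of tilt-slope $0$ at $(\bar\alpha,\bar\beta)$ (one has $k=\bar\beta\pm\bar\alpha\in\{-1,-2/d\}$, of which only $-1$ is an integer), the description of discriminant-zero tilt-semistable objects (as in \cite{BMS}) forces every Jordan--Hölder factor of $P$ of nonzero rank to have vanishing discriminant, hence to be a shift of a line bundle of tilt-slope $0$, i.e. isomorphic to $\cO_X(-1)[2]$; the remaining factors are point sheaves. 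As $\Ext^{-1}_X(\cO_X(-1),\cO_x)=0$, every extension of $\cO_X(-1)[2]$ by a point sheaf splits, so the Jordan--Hölder filtration can be arranged to exhibit $\cO_X(-1)[2]$ as a subobject of $P$, and hence of $G=F[1]$.

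Thus $0\neq\Hom_{\mathrm{D^b}(X)}(\cO_X(-1)[2],F[1])=\Ext^{-1}_X(\cO_X(-1),F)$, and it remains to contradict this; I would do so by a slope comparison at $\beta=-\tfrac12$. Since $d\leqslant 4$, the wall computation in the proof of Proposition \ref{stab} shows that the line $\beta=-\tfrac12$ meets no wall for the $\sigma^0_{\alpha,\beta}$-stability of objects of class $(2,0,-2,0)$; as $G=F[1]$ is $\sigma(\alpha,-\tfrac12)$-semistable in $\Ku(X)$ for $0<\alpha<\tfrac12$ (by hypothesis and \cite[Proposition 3.6]{PY}), it is in fact $\sigma^0_{\alpha,-\frac12}$-semistable in $\Coh^0_{\alpha,-\frac12}(X)$ for all such $\alpha$. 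On the other hand a direct computation with $\ch^{-1/2}_{\leqslant2}(F)=(2,H,\tfrac{d-8}{4})$ and $\ch^{-1/2}_{\leqslant2}(\cO_X(-1))=(1,-\tfrac12 H,\tfrac18 H^2)$ gives
\[
\mu_{\sigma^0_{\alpha,-1/2}}(\cO_X(-1)[2])=\frac{4}{1-4\alpha^2}\;>\;\frac{d}{d\alpha^2+\tfrac{8-d}{4}}=\mu_{\sigma^0_{\alpha,-1/2}}(G)
\]
for every $\alpha\in(0,\tfrac12)$ and $d\in\{3,4\}$. Hence $\cO_X(-1)[2]$, having strictly larger slope, cannot be a subobject of the $\sigma^0_{\alpha,-1/2}$-semistable object $G$ in $\Coh^0_{\alpha,-\frac12}(X)$, so $\Ext^{-1}_X(\cO_X(-1),F)=0$ — a contradiction. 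This disposes of all six cases and, with the preceding lemmas, completes the proof of Proposition \ref{stabrev}.

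The step I expect to require the most care is the identification of the Jordan--Hölder factors of $P$: one must control the discriminant-zero $\sigma^0_{\bar\alpha,\bar\beta}$-semistable objects of infinite slope precisely enough to conclude that the only possibilities are $\cO_X(-1)[2]$ and point sheaves, and verify that $\cO_X(-1)[2]$ can genuinely be extracted as a subobject rather than merely a subquotient. A secondary point needing justification is the passage from $\sigma(\alpha,-\tfrac12)$-semistability of $G$ inside $\Ku(X)$ to its $\sigma^0_{\alpha,-\tfrac12}$-semistability inside the larger heart $\Coh^0_{\alpha,-\frac12}(X)$, which rests on the absence of walls along $\beta=-\tfrac12$ in degrees $3$ and $4$ and should be handled exactly as in the proof of Proposition \ref{stab}.
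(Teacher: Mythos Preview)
Your first step—showing that $\cO_X(-1)[2]$ occurs as a subobject of $P$ (hence of $G$) in $\Coh^0_{\bar\alpha,\bar\beta}(X)$—is essentially what the paper does too, and your splitting argument using $\Ext^{-1}(\cO_X(-1),\cO_Z)=0$ is fine. The paper pushes further and identifies $P$ exactly as $\cO_X(-1)^{\oplus n}[2]$, but your weaker conclusion would suffice \emph{if} the second step worked.

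The second step, however, has a genuine gap. You want to pass from $\sigma(\alpha,-\tfrac12)$-semistability of $G$ in $\Ku(X)$ to $\sigma^0_{\alpha,-\tfrac12}$-semistability of $G$ in the larger heart $\Coh^0_{\alpha,-\frac12}(X)$, invoking the absence of walls along $\beta=-\tfrac12$. But ``no walls on $\beta=-\tfrac12$'' only says that semistability is preserved \emph{along} that line once established at some point on it; it does not manufacture semistability from nothing. Semistability inside $\Ku(X)$ does not imply semistability in the ambient $\Coh^0_{\alpha,\beta}(X)$, precisely because potential destabilisers such as $\cO_X(-1)[2]$ are \emph{not} in $\Ku(X)$. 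In fact, the entire case you are treating is defined by the assumption that $\mathcal{C}$ \emph{is} an actual wall and that $G$ becomes $\sigma^0$-\emph{unstable} for $\beta\to-\tfrac12$; your own slope computation $\mu_{\sigma^0_{\alpha,-1/2}}(\cO_X(-1)[2])>\mu_{\sigma^0_{\alpha,-1/2}}(G)$ confirms that the injection $\cO_X(-1)[2]\hookrightarrow G$ destabilises $G$ at $(\alpha,-\tfrac12)$, rather than contradicting anything. So the ``contradiction'' you reach is exactly the case hypothesis.

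The paper circumvents this by not trying to show $G$ itself is semistable past the wall. Instead, after identifying $P=\cO_X(-1)^{\oplus n}[2]$ (with $n$ maximal), it uses $\dim\Hom(\cO_X(-1)[2],Q[1])\geqslant 2d+n>n$ to build the \emph{reverse} extension $0\to Q\to G'\to P\to 0$ from $n$ linearly independent classes; then argues $G'$ has no $\cO_X(-1)[2]$ subobject, hence is $\sigma^0$-semistable on the $\beta\to-\tfrac12$ side of $\mathcal{C}$. The earlier ``$\mathcal{C}$ not a wall'' argument then applies to $G'$, producing a Gieseker-semistable sheaf $F'\in\Ku(X)$ with $G'=F'[1]$, and finally $\Hom(G',P)\neq 0$ contradicts $F'\in\Ku(X)$. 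The essential idea you are missing is this flip: one must replace $G$ by an object that genuinely remains semistable across the wall before running any comparison at $\beta=-\tfrac12$.
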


\begin{proof}
In these cases $\ch_{\leqslant 2}(P)=n\cdot(1, -H,\frac{H^2}{2})$. Let $\cH^i(P)$ denote the $i$-th cohomology of $P$ in $\Coh^{\bar{\beta}}(X)$. Since $P$ is $\sigma^0_{\bar{\alpha},\bar{\beta}}$-semistable, we see  $\cH^0(P)$ is either $0$ or supported on points. Moreover, $\cH^{-1}(P)$ is $\sigma_{\bar{\alpha},\bar{\beta}}$-semistable. Denote $\ch(\cH^{-1}(P))=(-n,nH,-\frac{nH^2}{2},m)$. By \cite[Conjecture 4.1]{BMS}\cite{Li}, $m\leqslant n\frac{H^3}{6}$. Note $\Delta(\cH^{-1}(P))=0$. By \cite[Corollary 3.10]{BMS}, either $\cH^{-1}(P)$ is $\sigma_{\bar{\alpha},\bar{\beta}}$-stable or it is strictly $\sigma_{\bar{\alpha},\bar{\beta}}$-semistable and its Jordan-H\"older factors have $\ch_{\leqslant 2}$ proportional to $\ch_{\leqslant 2}(\cH^{-1}(P))$. Let $R_i$ ($1\leqslant i\leqslant N$) be the factors in a Jordan-H\"older filtration of $\cH^{-1}(P)$, then $\ch(R_i)=(-k_i,k_iH,-\frac{k_iH^2}{2},r_i)$ for positive integers $k_i$ such that $k_1+\cdots+k_N=n$. By \cite[Corollary 3.11(c)]{BMS} and using $\H^j(R_i)$ to denote the $j$-th cohomology of $R_i$ in $\Coh(X)$, $\H^0(R_i)$ has zero dimensional support (say of length $l_i$) and $\H^{-1}(R_i)$ is a slope-semistable sheaf with Chern character $(k_i,-k_iH,\frac{k_iH^2}{2},-r_i+l_i)$. Then $\H^{-1}(R_i)\otimes \cO_X(1)$ is a slope-semistable sheaf with Chern character $(k_i,0,0,k_i\frac{H^3}{6}-r_i+l_i)$. By \cite[Proposition 4.18(i)]{BBF et},
\begin{align}\label{inequi}
    k_i\frac{H^3}{6}-r_i+l_i\leqslant 0
\end{align}
 for each $1\leqslant i\leqslant N$. Note 
\begin{align*}
    \sum_{i=1}^N r_i=m\leqslant n\frac{H^3}{6}
\end{align*}
Combined with (\ref{inequi}), we have $\sum_{i=0}^Nl_i=0$. Since all $l_i$ are nonnegative integers, we have $l_i=0$ and $r_i=k_i\frac{H^3}{6}$ for all $i$. By \cite[Proposition 4.18(i)]{BBF et}, we see $R_i\cong \cO_X(-1)^{\oplus k_i}[1]$ for all $1\leqslant i\leqslant N$. Hence $\cH^{-1}(P)=\cO_X(-1)^{\oplus n}[1]$. Next we claim $\cH^0(P)=0$. Suppose otherwise, then we have a sequence 
\begin{align*}
    0\to \cO_X(-1)^{\oplus n}[2]\to G\to Q'\to 0
\end{align*}
in $\Coh^0_{\bar{\alpha},\bar{\beta}}(X)$, where $Q'$ is defined by $0\to \cH^0(P)\to Q'\to Q\to 0$. Since $G\in \mathsf{Ku}(X)$ and $H^i(\cO_X(-1))=0$ for all $i$, we have $\Hom(\cO_X,Q')=0$. We have the long exact sequence
\begin{align*}
    \to \Hom(\cO_X,Q[-1])\to \Hom(\cO_X,\cH^0(P))\to \Hom(\cO_X,Q')\to
\end{align*}
Note the first term is $0$ since $\cO_X$ and $Q$ are in the same heart, thus $\Hom(\cO_X,\cH^0(P))=0$ and in turn $\cH^0(P)=0$, i.e. $P=\cO_X(-1)^{\oplus n}[2]$.

We can assume that $n$ is maximal, that is, $\cO_X(-1)[2]$ is not a subobject of $Q$ in $\Coh^0_{\bar{\alpha},\bar{\beta}}(X)$.
It is easy to compute that $\chi(\cO_X(-1)[2],Q)=-2d-n$ and $\chi(P,Q)=-2nd-n^2<0$. Note $\Hom(P,Q[i])$ is $0$ if $i<0$ or $i>3$. We have $$\Hom(P,Q[3])=\Hom(Q,\cO_X(-3)^{\oplus n}[2])=0.$$ Then $\Hom(P,Q[1])>0$. Moreover, since $\mathrm{dim}( \Hom(\cO_X(-1)[2],Q[1]))\geqslant2d+n>n$, we can define $G'$ as a extension
\begin{align*}
    0\to Q\to G'\to P\to 0
\end{align*}
in $\Coh^0_{\bar{\alpha},\bar{\beta}}(X)$ which is determined by $n$ linearly independent vectors in $\Hom(\cO_X(-1)[2],Q[1])$.\ 

We claim $G'$ is $\sigma^0_{\alpha,\beta}$-semistable above the wall $\mathcal{C}$ in a neighbourhood of $(\bar{\alpha},\bar{\beta})$. By our previous three lemmas and the first paragraph of this proof, it suffices to show $\cO_X(-1)[2]$ is not a subobject of $G'$ in $\Coh^0_{\bar{\alpha},\bar{\beta}}(X)$. Suppose otherwise. Since $n$ is maximal, the induced map from the subobject $\cO_X(-1)[2]$ to $P=\cO_X(-1)^{\oplus n}[2]$ is nontrivial. 
 However, the composition of this  induced map $O_X(-1)[2]\to P$ with $P\to Q[1]$(defined by $G'$) is trivial, which contradicts our construction of $G'$.

Since $\Hom(\cO_X,G'[i])=0$ for all $i$, by Lemma \ref{nowall-1}, $G'$ is $\sigma^0_{\alpha,\beta}$-semistable for $\beta\to-\half$. We can argue as in the previous case and conclude that $G'=F'[1]$ where $F'$ is a Gieseker-semistable sheaf with Chern character $(2,0,-2,0)$. Then $F'\in \mathsf{Ku}(X)$, so $\Hom(G',P)=0$ gives a contradiction.
\end{proof}
\begin{rem}
    We note the same arguments work for case (1) until the last line when we conclude $F'\in \mathsf{Ku}(X)$. See Remark \ref{deg5vanishing}.
\end{rem}

\begin{thm}\label{MOD}
    Let $X$ be a Fano threefold of Picard rank one, index two and degree $d$. If $d\geqslant3$, then for any $\sigma\in\mathcal{K}$, the moduli space of Gieseker-semistable sheaves on $X$ with Chern character $(2,0,-2,0)$ and satisfying $H^1(E(-1))=0$ is isomorphic to a moduli space $M_\sigma(\mathsf{Ku}(X),2[\cI_l])$ of $\sigma$-semistable objects in $\mathsf{Ku}(X)$ with numerical class twice of that of an ideal sheaf of a line in $X$.
\end{thm}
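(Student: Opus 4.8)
The plan is to construct an explicit morphism $f\colon M_d\to M_\sigma(\mathsf{Ku}(X),2[\cI_l])$ and then show it is bijective on points and an isomorphism on infinitesimal data, hence an isomorphism. The first observation is that every point of $M_d$ really does give a $\sigma$-semistable object of $\mathsf{Ku}(X)$ with the correct numerical class. Indeed, by Remark \ref{deg5vanishing} a Gieseker-semistable sheaf $E$ with $\ch(E)=(2,0,-2,0)$ satisfies $H^1(E(-1))=0$ if and only if $E\in\mathsf{Ku}(X)$, and then $[E]=2[\cI_l]$ in $\mathcal{N}(\mathsf{Ku}(X))$ by Hirzebruch--Riemann--Roch. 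Moreover $E$ is $\sigma$-semistable for all $\sigma\in\mathcal{K}$: if $E$ is locally free this is Proposition \ref{stab}; if $E$ is the torsion-free sheaf attached to a smooth conic it is Proposition \ref{conicstab}; and if $E$ is strictly Gieseker-semistable, so that $E$ is an extension of two ideal sheaves of lines by Proposition \ref{classfication}, then $E$ is strictly $\sigma$-semistable by Remark \ref{semistab}. Applying this fibrewise to the stack of such Gieseker-semistable sheaves, using that the family lies in $\mathsf{Ku}(X)\subset\mathrm{D^b}(X)$ fibrewise and that $\sigma$-semistability is an open condition in families, one obtains a morphism of stacks into the stack of $\sigma$-semistable objects of class $2[\cI_l]$, which descends to the desired morphism $f$ of (good) moduli spaces.

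Next I would check bijectivity on closed points. On the Gieseker-stable locus, Proposition \ref{stabrev} says that any $\sigma$-stable $F$ with $[F]=2[\cI_l]$ is of the form $E[2k]$ for a Gieseker-stable sheaf $E$ of rank $2$ with $c_1=0,c_2=2,c_3=0$; such $E$ lies in $\mathsf{Ku}(X)$, hence satisfies $H^1(E(-1))=0$, so $[E]\in M_d$ with $f([E])=[F]$, and the shift is pinned down by $F[1]\in\mathcal{A}(\alpha,\beta)$, which also gives injectivity on this locus. On the strictly semistable locus, Remark \ref{semistab} together with the proof of Corollary \ref{stabcor} and \cite[Proposition 4.6]{PY} shows that a strictly $\sigma$-semistable $F$ with $[F]=2[\cI_l]$ is $S$-equivalent to $\cI_{l_1}\oplus\cI_{l_2}$ for two lines $l_1,l_2$, while by Proposition \ref{classfication} the strictly Gieseker-semistable sheaves in $M_d$ are exactly the extensions of two such ideal sheaves; thus both strictly semistable loci are identified with the symmetric square of the Fano surface of lines of $X$, compatibly with $f$.

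Finally I would upgrade this to an isomorphism. Since $\mathsf{Ku}(X)$ is admissible in $\mathrm{D^b}(X)$, for a sheaf $E\in\mathsf{Ku}(X)$ one has $\Ext^i_{\mathsf{Ku}(X)}(E,E)=\Ext^i_X(E,E)$ for all $i$; hence at a $\sigma$-stable point the Zariski tangent space of $M_\sigma(\mathsf{Ku}(X),2[\cI_l])$ equals $\Ext^1_X(E,E)$, the tangent space of $M_d$, and $df$ is an isomorphism there. At a strictly semistable point one compares the local (Luna-slice) models built from $\Ext^1(\cI_{l_1}\oplus\cI_{l_2},\cI_{l_1}\oplus\cI_{l_2})$ together with $\mathrm{Aut}(\cI_{l_1}\oplus\cI_{l_2})$, which agree on the two sides since the objects, their $\Ext$'s and their automorphisms literally coincide. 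Combined with the bijection and the fact that $M_d$ is a smooth projective variety of dimension $5$ (Theorem \ref{moduli}), this forces $f$ to be an isomorphism; alternatively, once $M_\sigma(\mathsf{Ku}(X),2[\cI_l])$ is known to be a normal proper variety, a bijective morphism from the smooth projective $M_d$ is an isomorphism by Zariski's main theorem.

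I expect the main obstacle to be the first step rather than the point-set bijection, which is essentially packaged by Propositions \ref{stab}, \ref{conicstab}, \ref{stabrev} and Remark \ref{semistab}. Concretely, one must verify that a flat family of Gieseker-semistable sheaves satisfying the vanishing condition constitutes a family of objects of $\mathsf{Ku}(X)$ in the precise sense used to define $M_\sigma(\mathsf{Ku}(X),2[\cI_l])$, so that it lands in the correct moduli functor, and that $\sigma$-semistability is open over the base; the secondary difficulty is the passage from a bijective morphism to an isomorphism, for which the tangent-space and Luna-slice comparison above (or a normality argument for $M_\sigma$) is required.
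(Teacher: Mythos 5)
Your proposal is correct and rests on the same core ingredients as the paper --- the pointwise identification supplied by Propositions \ref{stab}, \ref{conicstab}, \ref{stabrev}, Corollary \ref{stabcor} and Remark \ref{semistab} --- but the concluding step is organized differently. The paper never constructs a morphism between two independently existing spaces: it argues that a family of Gieseker-semistable sheaves with $\ch=(2,0,-2,0)$ and $H^1(E(-1))=0$ \emph{is} a family of $\sigma$-semistable objects of class $2[\cI_l]$ (up to shift), with compatible $S$-equivalence, so the two moduli functors coincide and are therefore corepresented by the same variety, namely $M_d$ of Theorem \ref{moduli}. Your route instead presupposes that $M_\sigma(\mathsf{Ku}(X),2[\cI_l])$ already exists as a good moduli space of a reasonable algebraic stack, builds a morphism $f$ from $M_d$, and then upgrades bijectivity on points to an isomorphism via $\Ext$-comparisons at stable points, Luna slices at strictly semistable points, or Zariski's main theorem after establishing normality and properness of the target; this is valid but imports nontrivial external input (existence, properness and local structure of Bridgeland moduli spaces on $\mathsf{Ku}(X)$) that the functorial identification sidesteps entirely. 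Your observation that $\Ext^i_{\mathsf{Ku}(X)}(E,E)=\Ext^i_X(E,E)$ for $E\in\mathsf{Ku}(X)$ is correct (adjunction for the admissible embedding) and is implicitly why the two functors agree at the infinitesimal level, so that part of your work is not wasted, merely unnecessary given the functorial shortcut. One small point to keep in mind: the shift in Proposition \ref{stabrev} is by an even integer and hence invisible to the numerical class $2[\cI_l]$; it is pinned down by fixing the phase, i.e.\ by requiring $F[1]\in\mathcal{A}(\alpha,\beta)$ exactly as you say, and this normalization should be regarded as part of the definition of the moduli functor rather than a consequence of the numerical data.
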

\begin{proof}
    By Propositions \ref{stab}, \ref{conicstab}, \ref{stabrev}, Corollary \ref{stabcor} and Remark \ref{semistab}, we see the notion of Gieseker-semistable sheaves on $X$ with Chern character $(2,0,-2,0)$ and satisfying $H^1(E(-1))=0$ is the same as $\sigma$-semistable objects in $\mathsf{Ku}(X)$ with numerical class $2[\cI_l]$ up to shifts. Moreover, the $S$-equivalences are compatible. As a result, having a family of Gieseker-semistable sheaves on $X$ with Chern character $(2,0,-2,0)$ and satisfying $H^1(E(-1))=0$ is equivalent to having a family of $\sigma$-semistable objects in $\mathsf{Ku}(X)$ with numerical class $2[\cI_l]$ up to shifts. We can then identify their respective moduli functors. Since the first functor is co-represented by the moduli spaces in Theorem \ref{moduli}, so is the second.
\end{proof}
\begin{rem}
    For $d=4$, there is another way to understand the previous theorem. Recall there is an equivalence $\Ku(X)\cong\mathrm{D}^b(C)$ for a smooth projective curve $C$ of genus $2$ (see \cite[Section 5]{Ku2} for the precise definition of the equivalence). By \cite[Theorem 2.7]{Ma}, we have $\mathcal{K}\cong\mathrm{Stab}(\Ku(X))\cong \mathrm{Stab}(C)=\sigma_0\cdot\tilde{\mathrm{GL}}_2^+(\R)$, where $\sigma_0$ is the slope stability on the curve $C$. Then for any $\sigma\in\mathcal{K}$, $\sigma$-semistable objects in $\mathsf{Ku}(X)$ with numerical class $2[\cI_l]$ corresponds to semistable vector bundles of rank $2$ and degree $0$ on $C$. The previous theorem for $d=4$ will then follow from \cite[Theorem 1.5]{Q2}.
\end{rem}

\section{Non-minimal instanton bundles}
In this section, we explore some applications of our methods from the previous sections to non-minimal instanton bundles. Let $\cE$ be an instanton bundle and let $n$ be its charge. Then $\cE$ is an object in $\Ku(X)$ if and only if $n=2$. On the other hand, one can associate to $\cE$ a unique vector bundle of rank $n$ which is an object in $\Ku(X)$:
\begin{lem}\cite[Lemma 3.5 and 3.6]{Ku2}
For each instanton bundle $\cE$ there exists a unique short exact sequence
\begin{align}\label{acyclic}
    0\to \cE\to\tilde{\cE}\to \cO_X^{n-2}\to 0,
\end{align}
where $\tilde{\cE}\in\Ku(X)$ is a simple slope-semistable vector bundle with Chern character $\ch(\tilde{\cE})=(n,0,-n,0)$. $\tcE$ is called the acyclic extension of $\cE$.
\end{lem}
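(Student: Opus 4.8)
The plan is to realize $\tcE$ as the projection $\mathrm{pr}_{\Ku(X)}(\cE)$ of $\cE$ onto the Kuznetsov component and to read the exact sequence (\ref{acyclic}) off the corresponding mutation triangle; the whole computational content is to determine $\Ext^\bullet(\cO_X,\cE)$ and $\Ext^\bullet(\cO_X(1),\cE)$. Since $\cE$ has rank $2$ and $c_1(\cE)=0$ we have $\cE^\vee\cong\cE$, so Serre duality on $X$ together with $\omega_X\cong\cO_X(-2)$ gives $\H^i(\cE(k))\cong\H^{3-i}(\cE(-2-k))^\vee$. As $\cE$ is slope-stable with $c_1=0$, one has $\H^0(\cE)=\H^0(\cE(-1))=\H^0(\cE(-2))=0$, hence $\H^3(\cE)=\H^3(\cE(-1))=0$ and $\H^2(\cE(-1))\cong\H^1(\cE(-1))^\vee$; the instanton vanishing $\H^1(\cE(-1))=0$ then yields $\Ext^\bullet(\cO_X(1),\cE)=\H^\bullet(\cE(-1))=0$, i.e.\ $\cE$ lies in the right orthogonal of $\cO_X(1)$. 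A Hirzebruch--Riemann--Roch computation with $\ch(\cE)=(2,0,-n,0)$ and $\mathrm{td}(\mathcal{T}_X)=(1,1,1+\tfrac{d}{3},1)$ gives $\chi(\cE)=2-n$, so once I show $\H^2(\cE)=0$ it follows that $\H^1(\cE)\cong\C^{\,n-2}$ and $\Ext^\bullet(\cO_X,\cE)=\C^{\,n-2}[-1]$. For the vanishing $\H^2(\cE)=0$, equivalently $\H^1(\cE(-2))=0$, I would restrict to a general smooth $S\in|\cO_X(1)|$ (a del Pezzo surface with $\omega_S\cong\cO_S(-1)$): twisting $0\to\cE(-1)\to\cE\to\cE|_S\to 0$ by $\cO_X(-1)$ and using $\H^0(\cE(-1))=\H^1(\cE(-1))=0$ identifies $\H^1(\cE(-2))$ with $\H^0(\cE|_S(-1))\cong\H^2(\cE|_S)^\vee$; for general $S$ the restriction $\cE|_S$ is slope-semistable, so $\cE|_S(-1)\cong\cE|_S\otimes\omega_S$ has negative slope and hence no global sections, giving $\H^2(\cE)=0$.

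With the cohomology in hand, set $\tcE:=\mathbb{L}_{\cO_X}(\cE)$, the left mutation through $\cO_X$; as $\cE$ is right orthogonal to $\cO_X(1)$ this coincides with $\mathrm{pr}_{\Ku(X)}(\cE)$, so $\tcE\in\Ku(X)$, and it fits in the triangle $\Ext^\bullet(\cO_X,\cE)\otimes\cO_X\to\cE\to\tcE$, that is $\cO_X^{\,n-2}[-1]\to\cE\to\tcE\xrightarrow{+1}$. Since $\tcE$ is the cone of a morphism from $\cO_X^{\,n-2}[-1]$ to the sheaf $\cE$, the long exact sequence of cohomology sheaves shows $\tcE$ is a sheaf sitting in the exact sequence $0\to\cE\to\tcE\to\cO_X^{\,n-2}\to 0$, which is (\ref{acyclic}); additivity of the Chern character gives $\ch(\tcE)=(2,0,-n,0)+(n-2)(1,0,0,0)=(n,0,-n,0)$, and $\tcE$ is locally free as an extension of locally free sheaves. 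It is slope-semistable: for $F\subseteq\tcE$ a subsheaf, $F\cap\cE$ is a subsheaf of the $\mu$-stable sheaf $\cE$ of slope $0$, so $c_1(F\cap\cE)\leqslant 0$, while $F/(F\cap\cE)$ embeds in $\cO_X^{\,n-2}$ and hence has $c_1\leqslant 0$, so $c_1(F)\leqslant 0=c_1(\tcE)$. It is simple: applying $\Hom(-,\tcE)$ to (\ref{acyclic}) and using $\H^0(\tcE)=0$ (since $\tcE\in\Ku(X)$) gives $\Hom(\tcE,\tcE)\hookrightarrow\Hom(\cE,\tcE)$, while applying $\Hom(\cE,-)$ to (\ref{acyclic}) together with $\Hom(\cE,\cE)=\C$ ($\cE$ stable) and $\Hom(\cE,\cO_X)=\H^0(\cE)=0$ shows $\Hom(\cE,\tcE)\cong\C$. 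Finally, uniqueness: given any exact sequence $0\to\cE\to\mathcal{F}\to\cO_X^{\,n-2}\to 0$ with $\mathcal{F}\in\Ku(X)$, applying $\mathrm{pr}_{\Ku(X)}$ to the triangle $\cE\to\mathcal{F}\to\cO_X^{\,n-2}\xrightarrow{+1}$ and using $\mathrm{pr}_{\Ku(X)}(\cO_X^{\,n-2})=0$ gives $\mathcal{F}\cong\mathrm{pr}_{\Ku(X)}(\cE)=\tcE$.

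The only step with genuine content is the vanishing $\H^2(X,\cE)=0$; the rest is bookkeeping with the mutation triangle. I expect to establish it via the hyperplane restriction above, which reduces the claim to slope-semistability of $\cE|_S$ for a general $S\in|\cO_X(1)|$ --- a restriction theorem of Mehta--Ramanathan/Flenner type applied to the $\mu$-stable bundle $\cE$. An alternative is to bootstrap $\H^1(\cE(-k))=0$ for all $k\geqslant 1$ directly from the instanton vanishing and stability, which in turn comes down to controlling the restriction of $\cE$ to a surface or to a general line.
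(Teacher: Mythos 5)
This lemma is quoted verbatim from \cite[Lemmas 3.5 and 3.6]{Ku2}; the paper gives no proof of its own, so I am judging your argument on its merits. Your overall architecture is the standard (and correct) one --- it is exactly what Remark 6.2 of the paper alludes to: $\tcE=\mathbb{L}_{\cO_X}\cE$ is the universal extension of $\H^1(\cE)\otimes\cO_X$ by $\cE$. Your verification that $\cE\in\cO_X(1)^{\perp}$, the computation $\chi(\cE)=2-n$, the Chern character bookkeeping, the slope-semistability of $\tcE$ via $F\cap\cE$ and $F/(F\cap\cE)$, the simplicity via $\Hom(\tcE,\tcE)\hookrightarrow\Hom(\cE,\tcE)\cong\C$, and the uniqueness via $\mathrm{pr}_{\Ku(X)}$ are all fine.

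The gap is exactly where you suspect it: the vanishing $\H^2(X,\cE)=0$ (equivalently $\H^1(\cE(-2))=0$), without which the mutation is not a sheaf and $\tcE\notin\Ku(X)$. You reduce it to slope-semistability of $\cE|_S$ for a general $S\in|\cO_X(1)|$ and appeal to Mehta--Ramanathan/Flenner. Those theorems control restrictions to general members of $|kH|$ only for $k$ large: Mehta--Ramanathan gives nothing at a fixed small $k$, and Flenner's effective bound at $k=1$ on a threefold of degree $d$ reads $2>d\cdot\max\{\tfrac{r^2-1}{4},1\}$ for rank $r=2$, which fails for every $d\geqslant 2$. Nor is the desired semistability of the degree-one restriction a harmless formality: stable bundles genuinely can restrict unstably to degree-one divisors (e.g.\ $T_{\pP^2}$ restricted to a line is $\cO(2)\oplus\cO(1)$). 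So as written this step does not close.

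Fortunately the vanishing follows formally from facts you have already established, with no restriction theorem at all. Pick two distinct general members $S_1,S_2\in|\cO_X(1)|$; since $\Pic(X)=\Z H$ every member of $|H|$ is irreducible and reduced, so $C=S_1\cap S_2$ has codimension $2$ and the Koszul complex $0\to\cO_X(-2)\to\cO_X(-1)^{\oplus2}\to\cO_X\to\cO_C\to0$ is exact. Tensoring with the locally free $\cE$ and splitting at $K=\ker(\cE\to\cE|_C)$, the sequence $0\to\cE(-2)\to\cE(-1)^{\oplus2}\to K\to0$ gives $\H^2(K)=0$ because $\H^2(\cE(-1))=0$ (part of your $\H^\bullet(\cE(-1))=0$) and $\H^3(\cE(-2))\cong\H^0(\cE^\vee)^\vee=\H^0(\cE)^\vee=0$; then $0\to K\to\cE\to\cE|_C\to0$ squeezes $\H^2(X,\cE)$ between $\H^2(K)=0$ and $\H^2(C,\cE|_C)=0$ (the latter because $C$ is a curve). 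With this substitution for your Step 3, the proof is complete and agrees in substance with Kuznetsov's.
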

\begin{rem}
    As remarked in \cite{Ku2}, $\tcE$ is nothing but the universal extension of $H^1(E)\otimes\cO_X$ by $\cE$. It can be also viewed as the left mutation $\mathbb{L}_{\cO_X}\cE$ of $\cE$ through $\cO_X$. It is clear that if $n=2$, $\cE=\tcE$.
\end{rem}


The next lemma and its proof are suggested by an anonymous referee.
\begin{lem}\label{acyc2}
    Let $E\ncong \cO_X$ be a slope-stable vector bundle with $\mu(E)=0$ on $X$. Let $\tE$ be the vector bundle defined by the universal extension
    \begin{align*}
        0\to E\to \tE\to H^1(E)\otimes\cO_X\to 0.
    \end{align*}
    Then $\tE$ is Gieseker-stable.
\end{lem}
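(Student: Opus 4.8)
The plan is to show that any subsheaf of $\tE$ that could destabilize it in the Gieseker sense leads to a contradiction, using the slope-stability of $E$ together with the universal (i.e. ``acyclic'') property of the extension. First I would record the numerical data: $\tE$ has rank $\mathrm{rk}(E)+h^1(E)$ and still satisfies $\mu(\tE)=0$ since $\ch_1(\tE)=\ch_1(E)=0$; moreover $\ch_2(\tE)=\ch_2(E)$. Because $\mu(\tE)=0$, Gieseker-stability is only at risk from a saturated subsheaf $F\subset\tE$ with $\mu(F)=0$ as well, so I would first reduce to the case $\mu_H(F)=0$, with quotient $\tE/F$ also of slope $0$.

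Next I would analyze such an $F$ via the defining sequence $0\to E\to\tE\to V\otimes\cO_X\to 0$ with $V=H^1(E)$. Consider the composite $F\hookrightarrow\tE\to V\otimes\cO_X$. Its image is a subsheaf of a trivial bundle, hence torsion-free of slope $\geqslant 0$; its kernel $F\cap E$ is a subsheaf of $E$. Since $E$ is slope-stable of slope $0$, any nonzero proper subsheaf of $E$ has negative slope, which forces the image of $F$ in $V\otimes\cO_X$ to have strictly positive slope unless $F\cap E$ is $0$ or all of $E$. If $F\cap E = E$, then $F$ sits in an extension of a subsheaf of $V\otimes\cO_X$ by $E$, and comparing Hilbert polynomials (the quotient is then a quotient of $V\otimes\cO_X$, a trivial bundle, so its Hilbert polynomial is controlled) should show $F$ cannot destabilize. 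If $F\cap E=0$, then $F$ injects into $V\otimes\cO_X$, so $F$ is a slope-$0$ subsheaf of a trivial bundle — hence $F$ is itself trivial, $F\cong\cO_X^{\oplus k}$ for some $k\geqslant 1$ after saturation.

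The crux is then to rule out a nonzero map $\cO_X^{\oplus k}\to\tE$ lifting the inclusion $\cO_X^{\oplus k}\hookrightarrow V\otimes\cO_X$, i.e. to exploit that the extension class of $\tE$ is the universal one. The class of the extension lives in $\mathrm{Ext}^1(V\otimes\cO_X,E)=V^\vee\otimes H^1(E)=\mathrm{Hom}(V,V)$ and is the identity. A subspace $\cO_X^{\oplus k}\subset V\otimes\cO_X$ lifts to $\tE$ precisely when the pullback of the extension class along $\cO_X^{\oplus k}\hookrightarrow V\otimes\cO_X$ vanishes in $\mathrm{Ext}^1(\cO_X^{\oplus k},E)=\mathrm{Hom}(\C^k,H^1(E))$; but the pullback of the identity of $\mathrm{Hom}(V,V)$ along an inclusion $\C^k\hookrightarrow V$ is the inclusion itself, which is nonzero. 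Hence no nonzero trivial subsheaf of $V\otimes\cO_X$ lifts, so the case $F\cap E=0$ cannot occur with $F\neq 0$. I expect this Ext-theoretic step — carefully identifying the obstruction to lifting and checking it is the inclusion map — to be the main obstacle, since one must be precise about the long exact sequence $\mathrm{Hom}(\cO_X^{\oplus k},V\otimes\cO_X)\to\mathrm{Ext}^1(\cO_X^{\oplus k},E)\to\mathrm{Ext}^1(\cO_X^{\oplus k},\tE)$ and the universality of the class.

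Finally I would assemble these cases: a saturated destabilizing (or merely slope-$0$ with equal reduced Hilbert polynomial) subsheaf $F$ must meet $E$ in either $0$ or $E$; the first is impossible by the lifting obstruction, and in the second case $F/E$ is a slope-$0$ subsheaf of the trivial bundle $V\otimes\cO_X$, hence trivial, and then $\ch(F)=(\mathrm{rk}(E)+k,0,\ch_2(E),\ast)$ with $\mathrm{rk}(F)<\mathrm{rk}(\tE)$; computing reduced Hilbert polynomials and using that $\chi(\cO_X^{\oplus k})$ is that of a trivial bundle while $H^1(E)\neq 0$ was used up shows $p(F)<p(\tE)$ strictly, so $\tE$ is Gieseker-stable. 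I would also note at the outset that $\tE$ is simple (this is part of the cited Lemma for $\cE$, but here follows from $E$ simple, $H^0(E)=0$, and the universal property), which together with slope-semistability and the above gives stability; alternatively the Hilbert polynomial comparison alone suffices.
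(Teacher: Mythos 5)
Your overall skeleton --- take a saturated slope-zero destabilizer $F\subset\tE$, analyse the composite $F\to H^1(E)\otimes\cO_X$, and split into the cases $F\cap E\neq 0$ and $F\cap E=0$ --- matches the paper's, and your $\mathrm{Ext}$-class computation correctly shows that no \emph{trivial} subsheaf of $H^1(E)\otimes\cO_X$ lifts to $\tE$ (this is equivalent to $h^0(\tE)=0$). But there are two genuine gaps. The main one is in the case $F\cap E=0$, where you reduce to $F\cong\cO_X^{\oplus k}$ ``after saturation.'' The saturation is taken inside $H^1(E)\otimes\cO_X$, and a subsheaf $F$ that lifts to $\tE$ need not have a saturation that lifts: the obstruction class can vanish on $F$ while being nonzero on its saturation. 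So you cannot replace $F$ by a trivial sheaf, and your obstruction computation says nothing about, say, $F\cong\cI_C\subset\cO_X\subset H^1(E)\otimes\cO_X$ with non-saturated image. The paper's proof spends its entire second half on exactly this case: since $F$ is saturated in the locally free $\tE$ it is reflexive; one shows the torsion sheaf $T=F'/F$ (with $F'$ the saturation of the image) would have to be pure of dimension $1$, whence $\mathcal{E}xt^2(T,\cO_X)$ is $1$-dimensional yet injects into a quotient of the $0$-dimensional sheaf $\mathcal{E}xt^1(F,\cO_X)$ --- a contradiction. Some argument of this kind is unavoidable and is absent from your proposal.

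Second, the case $F\supseteq E$ (and the statement itself) requires the strict inequality $p(E)\prec p(\cO_X)$ of reduced Hilbert polynomials; without it $E$ itself already Gieseker-destabilizes $\tE$. This is not formal: it uses the Bogomolov inequality $H\ch_2(E)\leqslant 0$ for the slope-stable $E$, and when $H\ch_2(E)=0$ it further needs $\ch_3(E)<0$, which the paper extracts from \cite[Proposition 4.18(i)]{BBF et} together with the hypothesis $E\ncong\cO_X$. Your final sentence asserts the Hilbert-polynomial comparison without this input. A smaller inaccuracy: a slope-stable sheaf has full-rank proper subsheaves of slope $0$ (e.g.\ $\cI_Z\otimes E$), so your trichotomy on $F\cap E$ should read ``zero, or of full rank in $E$,'' the latter case being absorbed into the Hilbert-polynomial comparison via $P(F\cap E)\preceq P(E)$.
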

\begin{proof}
    Since $E$ is slope-stable,  $\ch_2(E)\leq 0$ by Bogomolov inequality. If $\ch_2(E)=0$, by \cite[Proposition 4.18(i)]{BBF et} and our assumption that $E\ncong\cO_X$, $\ch_3(E)<0$. In particular $P(E)\prec P(\cO_X)$ and $h^0(E)=0$.
    We note that $\tE$ is a slope-semistable vector bundle with $\mu(\tE)=0$. Moreover, $h^0(\tE)=h^1(\tE)=0$.\ 
    
    Suppose $\tE$ is not Gieseker-stable. Then $H^1(E)\neq0$ and $P(E)\prec P(\tilde{E})$. Let $0\neq G\hookrightarrow\tE$ be a Gieseker-stable subsheaf so that $P(G)\succeq P(\tE)$. Then $\mu(G)=0$. We can further assume that $\tE/G$ is torsion free. Since $\tE$ is locally free, $G$ is reflexive. Let $K$ and $I$ be the kernel and image of the composite morphism:
    \begin{align*}
        f: G\hookrightarrow\tE\to H^1(E)\otimes\cO_X
    \end{align*}
    respectively. If $I=0$, then $G$ is a nontrivial proper subsheaf of $E$ with $\mu(G)=0$, this contradicts the assumption that $E$ is slope-stable, thus $I\neq 0$. As $I$ is a subsheaf of $H^1(E)\otimes\cO_X$ as well as a quotient sheaf of $G$, $\mu(I)=0$, thus either $\mu(K)=0$ or $K=0$. Now $K$ is a subsheaf of $E$, if $K=E$, then one easily checks that $P(G)\prec P(\tilde{E})$, contradicting our choice of $G$. So we must have $K=0$, and $f$ is injective.\ 
    
    If $G\hookrightarrow H^1(E)\otimes \cO_X$ is saturated, then \cite[Corollary 1.6.11]{HL} implies that $G\cong\cO_X$, which contradicts the fact that $G$ is a subsheaf of $\tE$ and $h^0(\tE)=0$.\ 
    
    If $G\hookrightarrow H^1(E)\otimes \cO_X$ is not saturated, let $G'$ be its saturation. We have the following commutative diagram
            \[\begin{tikzcd}
         &&&0\ar{d}&\\
    &&&T\ar{d}&\\
    0\arrow{r} &G\arrow{r}\arrow[hook]{d}&H^1(E)\otimes \cO_X\arrow{r}\arrow[equal]{d}&Q\arrow{r}\ar{d}&0\\
    0\ar{r}&G'\arrow{r}&H^1(E)\otimes \cO_X\ar{r}&Q'\ar{r}\ar{d}&0\\
    &&&0
    \end{tikzcd}
    \]
    where $T$ is the torsion part of $Q$, and the short exact sequence
    \begin{align*}
        0\to G\to G'\to T\to 0.
    \end{align*}
    Since $H^1(E)\otimes \cO_X$ is locally free and $Q'$ is torsion free, $G'$ is reflexive. If $\dim(T)=2$, then $c_1(T)>0$. Since $Q'$ is a quotient of $H^1(E)\otimes \cO_X$, $c_1(Q')\geq 0$. Now $0=c_1(Q)=c_1(Q')+c_1(T)>0$ leads to a contradiction. It follows that $\dim(T)\leqslant1 $ and $\mathcal{E}xt^1(T,\cO_X)=0$. Dualizing the short exact sequence above we obtain:
    \begin{align*}
        0\to \mathcal{E}xt^1(G',\cO_X)\to \mathcal{E}xt^1(G,\cO_X)\to \mathcal{E}xt^2(T,\cO_X)\to 0
    \end{align*}
    and $\mathcal{E}xt^3(T,\cO_X)=0$, i.e $T$ must have pure dimension $1$. Since $G$ and $G'$ are reflexive, $\mathcal{E}xt^1(G',\cO_X)$ and $ \mathcal{E}xt^1(G,\cO_X)$ are both $0$-dimensional (\cite[Proposition 1.1.10]{HL}). But $\mathcal{E}xt^2(T,\cO_X)$ is $1$-dimensional, which leads to a contradiction.
\end{proof}

\begin{cor}\label{acyc3}
    Let $\cE$ be an instanton bundle on $X$. Then its acyclic extension $\tcE$ is Gieseker-stable.
\end{cor}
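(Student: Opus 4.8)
The plan is to read off the corollary directly from Lemma \ref{acyc2}, so the entire task reduces to checking that an instanton bundle and its acyclic extension fit the hypotheses and conclusion of that lemma. First I would verify that an instanton bundle $\cE$ satisfies the assumptions of Lemma \ref{acyc2}: by definition $\cE$ is a stable rank-$2$ vector bundle with $c_1(\cE)=0$, hence $\mu(\cE)=0$, and (exactly as in the proof of Proposition \ref{stab}, using \cite[Lemma 1.23]{Sa}) $\cE$ is slope-stable; since $\mathrm{rk}(\cE)=2$ we certainly have $\cE\ncong\cO_X$.

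Next I would identify the acyclic extension $\tcE$ with the bundle $\tE$ appearing in Lemma \ref{acyc2}. The remark following \cite[Lemma 3.5 and 3.6]{Ku2} records that $\tcE=\mathbb{L}_{\cO_X}\cE$ is precisely the universal extension of $H^1(\cE)\otimes\cO_X$ by $\cE$, i.e. it sits in
\begin{align*}
    0\to\cE\to\tcE\to H^1(\cE)\otimes\cO_X\to 0,
\end{align*}
which is the defining sequence of $\tE$ in Lemma \ref{acyc2} with $E=\cE$. (When the charge is $n=2$ one has $H^1(\cE)=0$, so $\tcE=\cE$, and the statement is just the standard implication that a slope-stable bundle is Gieseker-stable.) Applying Lemma \ref{acyc2} then yields that $\tcE$ is Gieseker-stable.

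I do not expect a real obstacle here, since the mathematical content is entirely carried by Lemma \ref{acyc2}; the only point requiring a little care is the identification of Kuznetsov's construction of $\tcE$ with the universal extension in the statement of Lemma \ref{acyc2}, and this is already recorded in the remark. If one preferred not to cite that remark, I would instead note that the vanishings $H^0(\cE)=H^2(\cE)=H^3(\cE)=0$ for an instanton bundle collapse the mutation triangle defining $\mathbb{L}_{\cO_X}\cE$ to exactly the displayed short exact sequence with $H^1(\cE)\otimes\cO_X$ on the right, after which Lemma \ref{acyc2} applies verbatim.
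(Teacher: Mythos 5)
Your proposal is correct and follows the same route as the paper: cite \cite[Lemma 1.23]{Sa} for slope-stability of $\cE$, identify $\tcE$ with the universal extension of $H^1(\cE)\otimes\cO_X$ by $\cE$, and apply Lemma \ref{acyc2}. The extra care you take in checking $\cE\ncong\cO_X$ and in justifying the identification via the mutation triangle is fine but not needed beyond what the paper already records.
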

\begin{proof}
    By \cite[Lemma 1.23]{Sa}, $\cE$ is slope-stable. Since $\tcE$ is precisely the universal extension of $H^1(\cE)\otimes \cO_X$ by $\cE$, we conclude by Lemma \ref{acyc2}.
\end{proof}

One can now ask for $n\geqslant3$, whether $\tcE\in\Ku(X)$ is $\sigma$-stable with respect to $\sigma\in\mathcal{K}$. We show that the answer to this question is positive for $n=3$.

\begin{prop}\label{extend}
    Assume $d\neq 1$. Let $\tcE$ be the acyclic extension of an instanton bundle $\cE$ of charge $3$. Then $\tcE$ is $\sigma$-semistable for any stability condition $\sigma\in\mathcal{K}$.
\end{prop}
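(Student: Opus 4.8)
The strategy is the tilt-stability wall argument of Proposition \ref{stab}, applied now to the rank-three bundle $\tcE$, followed by the standard passage to $\mathcal{K}$.

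First the setup. By the acyclic-extension sequence (\ref{acyclic}) one has $\ch(\tcE)=(3,0,-3,0)$, so $\ch^{-\half}_{\leqslant 2}(\tcE)=(3,\tfrac32 H,\tfrac{3(d-8)}{8})$, $\mu_H(\tcE)=0>-\half$ (hence $\tcE\in\Coh^{-\half}(X)$), and $\Delta_H(\tcE)=18d$. By Corollary \ref{acyc3}, $\tcE$ is Gieseker-stable, hence $2$-Gieseker-semistable by the implications recalled in Section~3.1, so by Proposition \ref{2G} it is $\sigma_{\alpha,-\half}$-semistable for $\alpha\gg0$. It therefore suffices to exhibit $\alpha_0\in(0,\half)$ for which $\tcE$ is $\sigma_{\alpha_0,-\half}$-semistable: since $\tcE$ is locally free this upgrades, up to shift, to $\sigma^0_{\alpha_0,-\half}$-semistability, whence --- as $\tcE\in\Ku(X)$ --- to $\sigma(\alpha_0,-\half)$-semistability, and finally, because $\mathcal{K}$ is a single $\tilde{\mathrm{GL}}_2^+(\R)$-orbit, to $\sigma$-semistability for every $\sigma\in\mathcal{K}$ (\cite[Section~3.3]{PY}).

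The substance is the analysis of walls for $\sigma_{\alpha,-\half}$-stability of $\tcE$ along $\beta=-\half$. A wall is witnessed by a short exact sequence $0\to E'\to\tcE\to E''\to0$ in $\Coh^{-\half}(X)$ with $\mu_{\alpha,-\half}(E')=\mu_{\alpha,-\half}(\tcE)$, with $0\leqslant\Delta_H(E'),\Delta_H(E'')\leqslant18d$, and with $\ch^{-\half}_{\leqslant 2}(E')$ not proportional to $\ch^{-\half}_{\leqslant 2}(\tcE)$. Writing $\ch^{-\half}_{\leqslant 2}(E')=(a,\tfrac b2 H,\gamma)$ with $a,b\in\Z$, the heart condition gives $0\leqslant b\leqslant 3$; the cases $b=0,3$ are disposed of as in Proposition \ref{stab} (they would force a sub- or quotient object of infinite slope, contradicting semistability for $\alpha\gg0$), and $b=2$ is exchanged with $b=1$ by $E'\leftrightarrow E''$, so one may take $b=1$, hence $a$ odd and $a\neq1$. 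The slope equation then forces $\gamma=\tfrac12\alpha^2 d(a-1)+\tfrac{d-8}{8}$, and --- imposing $\Delta_H(E')\geqslant0$, $\Delta_H(E'')\geqslant0$, and integrality of $\ch_2(E')$ --- the same kind of elementary case check as in Proposition \ref{stab} shows that for $d\in\{3,4,5\}$ every admissible numerical wall lies at some $\alpha\leqslant\tau_d$ with $\tau_d<\half$; choosing $\alpha_0\in(\tau_d,\half)$ then closes the argument in those cases by pure numerics, just as in Proposition \ref{stab}. For $d=2$ a single residual numerical wall survives, realised at $\alpha=\half$ by $\ch(E')=(3,-H,0)$ with $E''=\tcE/E'$ a rank-zero sheaf of class $(0,H,-3)$; this I would eliminate as an \emph{actual} wall exactly as in Lemmas \ref{first}--\ref{last}, by passing to the cohomology sheaves of $E'$ and $E''$ in $\Coh^{-\half}(X)$ and using $\Hom(\cO_X,\tcE[\bullet])=\Hom(\cO_X(1),\tcE[\bullet])=0$, the defining sequence $0\to\cE\to\tcE\to\cO_X\to0$, and the sharp Bogomolov--Gieseker estimates (\cite[Conjecture~4.1]{BLMS},\cite{Li},\cite[Proposition~4.18]{BBF et}), to see that $\tcE$ admits no such destabilising subobject in $\Coh^{-\half}(X)$; hence no wall is crossed below $\alpha=\half$ and one may again take any $\alpha_0<\half$.

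The hypothesis $d\neq1$ enters precisely here: for $d=1$ the analogue of that residual wall is realised at $\alpha=\tfrac{\sqrt3}{2}>\half$, and the entire feasible segment $\{(\alpha,-\half):0<\alpha<\half\}$ lies strictly inside the corresponding wall semicircle (centre $(0,-3)$, radius $\sqrt7$), so one cannot reach a chamber in which $\tcE$ is known to be semistable. I expect the main obstacle to be this elimination of the residual wall for $d=2$: whereas for the minimal instanton $\Delta_H(E)=8d$ is small enough that the pure numerics already rule out every wall, here $\Delta_H(\tcE)=18d$ leaves a numerically-consistent wall whose non-existence must be argued homologically rather than numerically; a secondary, mechanical nuisance is verifying the finite list of admissible $(a,\gamma)$ for $d=3$ and $d=5$.
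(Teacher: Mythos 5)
Your overall strategy is the same as the paper's: use Corollary \ref{acyc3} to get Gieseker-stability of $\tcE$, hence $2$-Gieseker-semistability and tilt-semistability for $\alpha\gg 0$ via Proposition \ref{2G}, then run the wall analysis of Proposition \ref{stab} along $\beta=-\half$ for the character $(3,\tfrac32 H,\tfrac{3(d-8)}{8})$ and pass to $\mathcal{K}$ by the $\tilde{\mathrm{GL}}_2^+(\R)$-orbit argument. The setup, the reduction to $b=1$ (with $a$ odd, $a\neq 1$), and the large-$a$ estimates are all correct and match the paper.

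There is, however, a genuine gap in your case analysis. You claim that for $d\in\{3,4,5\}$ every admissible numerical wall sits at some $\alpha\leqslant\tau_d<\half$, so that pure numerics closes those cases, and that only $d=2$ leaves a residual wall. This is false for $d=3$: taking $b=1$, $a=3$, the slope equation gives $c=d-8+8d\alpha^2$, the Bogomolov constraint gives $c\leqslant d/3$, and integrality forces $c\equiv 3d\pmod 8$; for $d=3$ this yields $c=1$ and $\alpha=\half$, i.e.\ an admissible numerical wall at the very edge of the relevant segment, which threatens semistability on all of $0<\alpha<\half$ and cannot be discarded numerically. The paper eliminates it by observing that the rank-$3$ destabilizer has $\Delta=0$, hence is tilt-\emph{stable} by \cite[Corollary 3.10]{BMS}, and then contradicting Proposition \ref{Libound}(3); an analogous argument via Proposition \ref{Libound}(5) and the rank-$3$ condition handles the $d=2$ wall you did identify. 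Your proposed elimination of the $d=2$ wall (cohomology sheaves plus $\Hom(\cO_X,\tcE[\bullet])=0$ and Bogomolov--Gieseker type estimates) is only a gesture and is not carried out; as written, your argument is complete only for $d=4$ and $d=5$, and is missing the decisive step for $d=2$ and entirely overlooks the problematic wall for $d=3$.
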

\begin{proof}
    By Corollary \ref{acyc3} and \cite[Proposition 4.8]{BBF et}, $\tcE$ is $\sigma_{\alpha,-\half}$ semistable for $\alpha\gg0$. We proceed as the proof of Proposition \ref{stab} to show there is no wall that will make $\tcE$ tilt-unstable along $\beta=-\half$. A wall would be given by a sequence in $\Coh^{-\half}(X)$:
    \begin{align*}
        0\to\tcE'\to\tcE\to\tcE''\to0
    \end{align*}
    in which the truncated twisted Chern characters satisfy
    \begin{align*}
        (3,\frac{3}{2}H,\frac{3}{8}(d-8))=(a,\frac{b}{2}H,\frac{c}{8})+(3-a,\frac{3-b}{2}H,\frac{3d-24-c}{8})
    \end{align*}
    for some $a,b,c\in\Z$. As in Proposition \ref{stab}, the wall condition and Bogomolov inequality imply
    \begin{enumerate}
    \item $\frac{1}{b}(\frac{c}{4d}-\alpha^2a)=\frac{d-8-4d\alpha^2}{4d}=\frac{1}{3-b}(\frac{3d-24-c}{4d}-\alpha^2(3-a))$;
    \item $(\frac{b}{2})^2-\frac{ac}{4d}\geqslant 0$, $(\frac{3-b}{2})^2-\frac{(3-a)(3d-24-c)}{4d}\geqslant 0$
    \item $(\frac{b}{2})^2-\frac{ac}{4d}\leqslant \frac{18}{d}$, $(\frac{3-b}{2})^2-\frac{(3-a)(3d-24-c)}{4d}\leqslant \frac{18}{d}$. 
\end{enumerate}
     Since $\tcE',\tcE''$ are in $\Coh^{-\half}(X)$, we have $b=0,1,2$ or $3$. We can easily eliminate the cases of $b=0$ and $b=3$ as in the proof of Proposition \ref{stab}. Note $\frac{b-a}{2}H$ is the first Chern character of either $\tcE'$ or $\tcE''$. Hence $a,b$ have the same parity. One of $a$ and $3-a$ will be at least two. Without loss of generality, we assume $a\geqslant2$.\ 
    
    Suppose $b=2$. Condition (1) implies
    $c=2d-16+4d\alpha^2(a-2)$.
    We observe that if $a=2$, then $(a,\frac{b}{2}H,\frac{c}{8})=(2,H,\frac{2d-16}{8})$ is proportional to the truncated twisted character of $\tcE$. This case will not affect tilt-semistability and can be ignored. If $a\geqslant 4$, the second inequality of (2) simplifies to
    \begin{align*}
        4d\alpha^2(a-2)\leqslant \frac{d}{a-3}+d-8\leqslant 2d-8.
    \end{align*}
    This immediately leads to a contradiction for $d\leqslant4$ since the left hand side is positive while the right hand side is non-positive. For $d=5$, the above equation becomes
    \begin{align*}
        \alpha^2\leqslant\frac{1}{10(a-2)}\leqslant\frac{1}{20}.
    \end{align*}
    In this case there are no walls for $\alpha>\frac{1}{\sqrt{20}}$.\ 
    
    Suppose $b=1$. Condition (1) implies $c=d-8+4d\alpha^2(a-1)$. If $a\geqslant5$, the second inequality of (2) simplifies to 
    \begin{align*}
        4d\alpha^2(a-1)\leqslant\frac{4d}{a-3}+2d-16\leqslant4d-16.
    \end{align*}
    Arguing as above we see there are no walls when $d\leqslant4$ and no wall with $\alpha>\frac{1}{\sqrt{20}}$ when $d=5$. If $a=3$, the formula for $c$ along with the first inequality of (2) implies $d-8<c\leqslant\frac{d}{3}$. Moreover, using the formula for twisted Chern character, $c=3d-8c_2$ where $c_2\in\Z$ is the second Chern class of either $\cE'$ or $\cE''$. Hence $c\equiv 3d \pmod{8}$. As a result, we have:
    \begin{itemize}
        \item for $d=5$, $c=-1$ and $\alpha=\frac{1}{\sqrt{20}}$;
        \item for $d=4$, no solution;
        \item for $d=3$, $c=1$ and $\alpha=\half$;
        \item for $d=2$, $c=-2$ and $\alpha=\half$.
    \end{itemize}
    Next we eliminate the cases when $d=3$ and $2$. For $d=3$, either the destabilizing subobject or quotient, which we denote by $\mathcal{G}$, has truncated twisted character $(3,\half H,\frac{1}{8})$. Then $\Delta(\mathcal{G})=0$. By \cite[Corollary 3.10]{BMS}, $\mathcal{G}$ is $\sigma_{\half,-\half}$-stable. Now we can exclude this case by Proposition \ref{Libound}(3). For $d=2$, either the destabilizing subobject or quotient, which we denote by $\mathcal{G}'$, has truncated twisted character $(3,\half H,-\frac{1}{4})$. Then it is easy to see $\mathcal{G}'$ is $\sigma_{\half,-\half}$-stable by checking the twisted first Chern class of its subobjects and quotients. Now we can exclude this case by Proposition \ref{Libound}(5) and the fact that the rank of $\mathcal{G}'$ is $3$.\ 
    
    To summarize, for $2\leqslant d\leqslant4$, there are no walls on $\beta=-\half$; for $d=5$, there are no walls with $\alpha>\frac{1}{\sqrt{20}}$ on $\beta=-\half$. We conclude using the argument at the end of the proof for Proposition \ref{stab}.
\end{proof} 


\begin{cor}\label{ext2}
Assume $d\geqslant3$. The acyclic extension $\tcE$ of an instanton bundle $\cE$ of charge $3$ is $\sigma$-stable for any stability condition $\sigma\in \mathcal{K}$.
\end{cor}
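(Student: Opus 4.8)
The goal is to upgrade the $\sigma$-semistability established in Proposition~\ref{extend} to $\sigma$-stability. The plan is to mimic the argument of Corollary~\ref{stabcor}, using the Gieseker-stability of $\tcE$ from Corollary~\ref{acyc3} in the place occupied there by slope-stability. First, by Proposition~\ref{extend} (which applies since $d\geqslant 3$ forces $d\neq 1$) the bundle $\tcE$ is $\sigma$-semistable for every $\sigma\in\mathcal{K}$, and by the final step of the proof of Proposition~\ref{stab} it is a $\sigma(\alpha_0,-\half)$-semistable object of the heart $\mathcal{A}(\alpha_0,-\half)$. Its Chern character is $\ch(\tcE)=(3,0,-3,0)=3\,\ch(\cI_l)$, so $[\tcE]=3[\cI_l]$ in $\Lambda^2_{\mathsf{Ku}(X)}$. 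Suppose, towards a contradiction, that $\tcE$ is not $\sigma$-stable for some $\sigma\in\mathcal{K}$. Since the $\tilde{\mathrm{GL}}_2^+(\R)$-action preserves both semistability and stability, $\tcE$ is then strictly $\sigma(\alpha_0,-\half)$-semistable, so it admits a Jordan--H\"older filtration $0=E_0\subset E_1\subset\cdots\subset E_N=\tcE$ in $\mathcal{A}(\alpha_0,-\half)$ with $N\geqslant 2$ whose factors $S_i:=E_i/E_{i-1}$ are $\sigma(\alpha_0,-\half)$-stable and all have the same slope as $\tcE$.

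The next step is to identify the factors $S_i$. Arguing as in the proof of \cite[Lemma~3.9]{PY}, the class of each $S_i$ lies on the ray through $Z(\alpha_0,-\half)([\cI_l])$, hence equals $n_i[\cI_l]$ for an integer $n_i\geqslant 1$; as $\sum_i n_i=3$ and $N\geqslant 2$, the multiset $\{n_i\}_i$ is $\{1,1,1\}$ or $\{1,2\}$. A factor with $n_i=1$ is a $\sigma(\alpha_0,-\half)$-stable object of $\mathcal{A}(\alpha_0,-\half)$ of class $[\cI_l]$, hence by \cite[Proposition~4.6]{PY} it is the ideal sheaf $\cI_{l_i}$ of a line. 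A factor with $n_i=2$ is a $\sigma(\alpha_0,-\half)$-stable object of class $2[\cI_l]$, hence by Proposition~\ref{stabrev} it is isomorphic to $E[2k]$ for some $k\in\Z$ and some Gieseker-stable sheaf $E$ with $\ch(E)=(2,0,-2,0)$; by Proposition~\ref{classfication} such an $E$ is a minimal instanton bundle or a sheaf as in (\ref{conic}), so $E\in\mathcal{A}(\alpha_0,-\half)$ by Propositions~\ref{stab} and~\ref{conicstab}, and since a nonzero object and a nontrivial shift of it cannot both lie in a single heart, necessarily $k=0$. Thus every $S_i$ is an honest coherent sheaf.

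It remains to transfer the filtration back to $\Coh(X)$. An extension in $\mathcal{A}(\alpha_0,-\half)$ of two objects that happen to be sheaves is again a sheaf, as one sees from the long exact sequence of cohomology sheaves with respect to the standard t-structure; hence, inductively, each $E_i$ is a sheaf. Moreover, since each quotient $E_i/E_{i-1}=S_i$ is a sheaf, we have $\mathcal{H}^{-1}(S_i)=0$, so the morphism $E_{i-1}\to E_i$ is injective on $\mathcal{H}^0$; therefore $E_1\subset E_2\subset\cdots\subset\tcE$ is a genuine filtration of $\tcE$ by coherent subsheaves. In particular $E_1$ is a proper nonzero subsheaf of $\tcE$ whose Chern character is $[\cI_l]$ or $2[\cI_l]$, in either case a positive rational multiple of $\ch(\tcE)$, so $E_1$ and $\tcE$ have equal reduced Hilbert polynomials. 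This contradicts the Gieseker-stability of $\tcE$ furnished by Corollary~\ref{acyc3}. Hence $\tcE$ is $\sigma$-stable for every $\sigma\in\mathcal{K}$.

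I expect the delicate point to be the middle paragraph: showing that the Jordan--H\"older factors are untwisted coherent sheaves, which rests on Proposition~\ref{stabrev}, on the classification in Proposition~\ref{classfication}, and on the fact that the resulting rank-two sheaves already belong to the heart $\mathcal{A}(\alpha_0,-\half)$. Once that is in place, pushing the filtration down to $\Coh(X)$ and reading off the contradiction from the Chern-character proportionality is routine, and the remaining input is just Proposition~\ref{extend} together with Corollary~\ref{acyc3}.
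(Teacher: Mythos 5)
Your proposal is correct in substance and reaches the statement by a genuinely different route from the paper in its second half. Both arguments begin the same way: reduce to $\sigma(\alpha_0,-\half)$ via the $\tilde{\mathrm{GL}}_2^+(\R)$-action, and identify the possible Jordan--H\"older factors as ideal sheaves of lines (class $[\cI_l]$, via \cite[Theorem 1.1]{PY}) or Gieseker-stable rank-two sheaves with $\ch=(2,0,-2,0)$ (class $2[\cI_l]$, via Proposition \ref{stabrev} and Proposition \ref{classfication}), suitably shifted. From there the paper argues factor-by-factor: it shows $\Hom(\cI_l,\tcE)=\Hom(E,\tcE)=\Hom(E',\tcE)=0$ for each candidate destabilizer by three explicit long exact sequences (the ideal-sheaf sequence of a line, the acyclic-extension sequence, and the conic sequence (\ref{conic})), which forbids any of them from being a subobject. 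You instead descend the entire Jordan--H\"older filtration to $\Coh(X)$ --- using that an extension in the heart of two (shifted) sheaves is again a (shifted) sheaf --- and derive a single contradiction with the Gieseker-stability of $\tcE$ from Corollary \ref{acyc3}, since any factor has Chern character a positive rational multiple of $\ch(\tcE)$ and hence the same reduced Hilbert polynomial. Your route is more uniform (one contradiction instead of three case-by-case $\Hom$ computations) and makes direct use of Corollary \ref{acyc3}, at the cost of needing the descent lemma for the whole filtration rather than only the first subobject. One presentational slip to fix: since $\mu_{\alpha_0,-\half}(\tcE)<0$, it is $\tcE[1]$ (not $\tcE$) that lies in $\mathcal{A}(\alpha_0,-\half)$, and likewise the factors are $\cI_{l_i}[1]$ and $E[1]$; your argument, including the ``no nontrivial shift of a nonzero object lies in the same heart'' step that pins down $k$, goes through verbatim after inserting these shifts.
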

\begin{proof}
    It remains to show $\tcE$ is not strictly $\sigma$-semistable. Suppose otherwise, by \cite[Theorem 1.1]{PY} and Theorem \ref{MOD}, the Jordan-H\"older factors (with respect to $\sigma$) of $\tcE$ are either ideal sheaves of lines on $X$ or rank $2$ semistable sheaves with Chern class $c_1=0$, $c_2=2$ and $c_3=0$. It suffices to show none of these sheaves can have nonzero morphisms to $\tcE$. Let $l\subset X$ be a line. We have long exact sequence
    \begin{align*}
        \to \Hom(\cO_X,\tcE)\to \Hom(\cI_l,\tcE)\to \Ext^1(\cO_l,\tcE)\to
    \end{align*}
    It is straightforward to check the left and right term both vanish, thus $\Hom(\cI_l,\tcE)=0$.\ 
    
    Let $E$ be a minimal instanton bundle. We have long exact sequence
    \begin{align*}
        \to \Hom(E,\cE)\to \Hom(E,\tcE)\to \Hom(E,\cO_X)\to 
    \end{align*}
    The left term is $0$ by Gieseker-stability, while the right term is $H^0(X,E)=0$ since $E$ is self-dual. Thus $\Hom(E,\tcE)=0$. \ 
    
    Let $E'$ be a stable but non-locally free sheaf with Chern character $(2,0,-2,0)$. By Proposition \ref{classfication}, we have long exact sequence
    \begin{align*}
        \to\Hom(\cO_X^{\oplus2},\tcE)\to\Hom(E',\tcE)\to \Ext^1(\theta(1),\tcE)\to
    \end{align*}
    where $\theta$ is the theta character of a smooth conic in $X$.    It is straightforward to check the left and right term both vanish, thus $\Hom(E',\tcE)=0$.\

\end{proof}

\begin{rem}
    Unfortunately the author was unable to extend Proposition \ref{extend} and Corollary \ref{ext2} to the cases when either $d=1$ or charge $n\geqslant4$. In each of these cases, there will be potential walls which we fail to eliminate.
\end{rem}

\end{document}